\newtheorem{theorem}{Theorem}[section]
\newtheorem{lemma}[theorem]{Lemma}
\newtheorem{corollary}[theorem]{Corollary}
\newtheorem{example}[theorem]{Example}
\newtheorem{definition}[theorem]{Definition}
\newtheorem{proposition}[theorem]{Proposition}
\newtheorem{remark}[theorem]{Remark}
\def\NN{\mathbb{N}}
\def\CC{\mathbb{C}}
\newcommand\M{\mathcal{M}}
\numberwithin{equation}{section}
\newcommand{\bex}{\begin{example}\rm}
\newcommand{\eex}{\end{example}}
\def\inter{{\rm int}}
\def\acc{{\rm acc}}
\def\iso{{\rm iso}}
\def\dim{{\rm dim}}
\def\snoi{\smallskip\noindent}
\def\i{{\rm ind}}
\def\codim{{\rm codim}\, }
\def\NN{{\mathbb N}}
\def\CC{{\mathbb C}}
\def\B{\mathcal{B}}
\def\M{\mathcal{M}}
\def\D{\mathcal{D}}
\def\DDD{\bf{D}}
\def\ds{\displaystyle}
\def\p{\partial}
\def\A{\mathcal{A}}
\def\N{\mathcal{N}}
\def\R{\mathcal{R}}
\def\dis{{\rm dis}}
\def\inter{{\rm int}}
\def\acc{{\rm acc}}
\def\iso{{\rm iso}}
\def\dim{{\rm dim}}
\def\snoi{\smallskip\noindent}
\def\i{{\rm ind}}
\def\codim{{\rm codim}\, }
\def\NN{{\mathbb N}}
\def\CC{{\mathbb C}}
\def\R{\mathcal{R}}
\def\B{\mathcal{B}}
\def\M{\mathcal{M}}
\def\D{\mathcal{D}}
\def\DDD{\bf{D}}
\def\ds{\displaystyle}
\def\p{\partial}
\def\c{\'c}
\def\dis{{\rm dis}}
\begin{document}

\setcounter{footnote}{1}

\title{One-sided  generalized  Drazin-Riesz and  one-sided generalized  Drazin-meromorphic  invertible operators}


\author {\ Sne\v zana  \v{C}. \v{Z}ivkovi\'{c}-Zlatanovi\'{c}\footnote{The author is
supported by the Ministry of Education, Science and Technological
Development, Republic of Serbia, grant no. 451-03-65/2024-03/200124.}}

\date{}

\maketitle
\setcounter{page}{1}

\begin{abstract}
The aim of this  paper is to  introduce and study left and right versions of the class of generalized Drazin-Riesz invertible operators, as well as left and right versions of the class of generalized Drazin-meromorphic  invertible operators.
 \end{abstract}

2010 {\it Mathematics subject classification\/}: 15A09,  47A53, 47A10.

{\it Key words and phrases\/}: Banach space; Saphar operators; left and right invertible operators; left and right Browder operators;  left and right Drazin invertible operators;  Riesz operators; meromorphic operators.


\section{Introduction and background}

In 1958 Drazin introduced a new  kind of a generalized inverse \cite{Drazin}: 
an element $a$ of  an associative ring (or a semigroup) $\A$  is  Drazin invertible if there exists
an element $b\in\A$ such that
$$ab = ba,\ bab = b,\   a^nba = a^n,
$$
for some nonnegative integer $n$.


The concept of  generalized Drazin invertible elements  was
introduced by  Koliha \cite{K}: an element $a$ of a Banach algebra $ \A$   is generalized Drazin
invertible  if  there exists $b\in \A$ such that
\begin{equation*}
  ab=ba,\ bab=b, \ aba-a\ {\rm is\ quasinilpotent}.
\end{equation*}
This notation has been further  generalized by replacing the third condition in the previous definition
 with the condition that $aba-a$ is $T$-Riesz, where $T:\A\to\B$ is a Banach algebra homomorphism satisfying the strong Riesz property, and so the notation
   of generalized Drazin-$T$-Riesz invertible elements was introduced in \cite{GDR}. 
By replacing the third condition in the previous definition
   with the condition that the Drazin spectrum of $aba-a$ does not contain non-zero elements, the notation of $d$-Drazin invertible elements   was introduced in \cite{GDR}.
The concept of generalized Drazin-$T$-Riesz invertible elements  extends the concept of generalized Drazin-Riesz   invertible operators,
   while the concept of $d$-Drazin invertible elements  is an extension  of the concept of generalized Drazin-meromorphic invertible operators: in the algebra  of bounded linear operators $L(X)$ on a complex Banach space $X$  an operator $A\in L(X)$ is  generalized Drazin-Riesz
invertible, if there exists $B \in L(X)$ such that
$AB=BA,\ BAB=B,\ ABA-A \ \text{is Riesz}$ \cite{ZC}. By   replacing the third condition in the previous definitions  by the condition that $ABA-A$ is meromorphic, we get the concept of generalized Drazin-meromorphic invertible operators  \cite{ZD}.

Necessary and sufficient for $A\in L(X)$  to be
Drazin invertible is
  that  \cite{King}, while  $A$  is generalized
Drazin invertible if and only if it can be decomposed
into a direct sum of an invertible operator and a quasinilpotent one \cite{K}.
Recently,  Ghorbel  and  Mnif \cite{Ghorbel Mnif} introduced the concept of left (resp., right) generalized Drazin invertible operators in $L(X)$, and
 proved that this kind of operators is characterized by the property that they can be decomposed into a direct   sum of a nilpotent operator and   a left invertible  (resp., right invertible) operator. On a  Hilbert space  the class of  left (resp., right) Drazin invertible operators coincides with the class  bearing the same name
  in existing literature (see \cite{Aiena Carpintero}, \cite{Aiena}),
 though  they do not coincide on an arbitrary Banach space \cite[p. 170]{ZC}.
Berkani, Ren and Jiang introduced the concept of left (resp., right) Drazin invertible elements in  an associative ring  $R$ with a unit: an element $a\in R$ is left (resp., right) Drazin invertible if and only if there exists an element $b\in R$ such that  $bab=b^2a=b$ and $a-aba=a-ba^2$ is nilpotent (resp. $bab=ab^2=b$ and $a-aba=a-a^2b$ is nilpotent) if and only if there exists an idempotent $p$ commuting with  $a$ such that $ap$ is nilpotent and $a+p$ is left (resp., right)  invertible \cite[Theorems 2.5 and 2.6]{Berkani}, \cite[Lemma 2.1, Theorems 2.1 and 2.2]{Ren Jiang}. From \cite[Theorems 3.4 and 3.8]{Ghorbel Mnif} it follows that the set of  left (resp., right) Drazin invertible operators in $L(X)$, in the sense of Ghorbel and Mnif, coincides with the set of  left (resp., right) Drazin invertible elements   in the Banach algebra $L(X)$, in the sense of Berkani, Ren and Jiang.

In \cite{Thome}  Ferreyra,  Levis and Thome  introduced the concept  of left (resp., right) generalized Drazin invertible operators in $L(X)$ in terms of generalized Kato decomposition. They proved that this kind of operators is characterized by the property that they can be decomposed into a direct   sum of a quasinilpotent operator and   a left invertible  (resp., right invertible) operator. In the case of  Hilbert spaces the class of left (resp., right) generalized Drazin invertible operators  coincides with the class bearing the same name and introduced by   Hocine,  Benharrat and Messirdi in \cite{algeria}.
  These classes do not coincide on an arbitrary Banach space. In \cite[Propositions 2.1 and 2.2]{Ounandjela} Ounandjela et al.  proved that an operator $A\in L(X)$, where $X$ is a Hilbert space, is left (resp., right) generalized  Drazin invertible if and only if there exists $B\in L(X)$ such that $ABA=BA^2,\ B^2A=B=BAB, \ A-ABA \ {\rm is\ quasinilpotent}$ (resp., $ABA=A^2B,\ AB^2=B=BAB, \ A-ABA \ {\rm is\ quasinilpotent}$).

  The purpose of this paper is to introduce the concept of one-sided  generalized Drazin-Riesz and one-sided  generalized Drazin-meromorphic  invertibility in the algebra $L(X)$: an operator $A\in L(X)$  is   left  (resp., right) generalized Drazin-Riesz invertible if there is $B\in L(X)$ such that
$ABA=BA^2,\ B^2A=B, \ A-ABA \ {\rm is\ Riesz}$ (resp., $ABA=A^2B,\ AB^2=B, \ A-ABA \ {\rm is\ Riesz}$); if the third condition in the previous definition is replaced by the condition that $A-ABA \ {\rm is\ meromorphic}$ then we get the concept of left (resp., right) generalized Drazin-meromorphic invertible operators.
   Starting from these algebraic definitions we obtain various characterizations of left (resp., right) generalized Drazin-Riesz invertible and left (resp., right) generalized Drazin-meromorphic  invertible operators, among others that an operator $A\in L(X)$ is left  (resp., right) generalized Drazin-Riesz invertible if and only if it can be decomposed in a direct sum of a left (resp., right) invertible operator and a Riesz operator, while $A$ is left  (resp., right) generalized Drazin-meromorphic invertible if and only if it can be decomposed in a direct sum of a left (resp., right) invertible operator and a meromorphic operator.
The concept of left (resp., right) Drazin invertible operators appears in a characterization of   left  (resp., right) generalized Drazin-meromorphic invertible oprators: an operator $A\in L(X)$ is left  (resp., right) generalized Drazin-meromorphic  invertible if and only if there exists a projector $P\in L(X)$   commuting with $A$
such that $A+P$ is left (resp., right) Drazin invertible and $AP$ is meromorphic.
The introduction of the notation  of generalized Saphar-Riesz decomposition as well as the notation of generalized Saphar-meromorphic decomposition allowed us to obtain some  characteristics of these classes of
  operators, which results in obtaining various properties of the corresponding spectra. 
   By using the concept of generalized Saphar-meromorphic decomposition
    it is proved  that
     on an arbitrary Banach space  the class of left (resp., right) generalized Drazin-Riesz invertible operators does not coincide with the class of operators that are characterized by the property that they can be decomposed into a direct sum of a bounded bellow (resp.,  surjective) operator and a Riesz operator. 
  Also it is proved  that on an arbitrary Banach space  the class of left (resp., right) generalized Drazin-meromorphic  invertible operators does not coincide with the class of operators that are characterized by the property that they can be decomposed into a direct sum of a bounded bellow (resp.,  surjective) operator and a meromorphic operator (Remark \ref{nakon}). 

The paper is organized as follows. The second section  contains some definitions and the third section contains some   preliminary results.    In  the forth section we introduce and investigate   left (right)
generalized Drazin-Riesz invertible operators. The fifth  section is devoted to
left (right) generalized Drazin-merpmorphic  invertible operators. The sixth section contains  various properties of corresponding spectra.

\section{Definitions}

 Let  $L(X)$   be  the Banach algebra of bounded linear operators acting on  an infinite dimensional complex  Banach space $X$. The dual of $X$ is denoted by  $X^\prime$, and the dual of $A\in L(X)$ by $A^\prime$.   Here $\mathbb{N} \, (\mathbb{N}_0)$ denotes the set of all positive
(non-negative) integers, $\mathbb{C}$ denotes the set of all
complex numbers.

 Let $A\in L(X)$.  By  $\sigma(A)$, $\sigma_{l}(A)$, $\sigma_{r}(A)$ and $\sigma_p(A)$ we denote  its  spectrum, left spectrum, right spectrum and   point spectrum, respectively.  The compression spectrum of $A$, denoted by $\sigma_{cp}(A)$, is the set of all complex $\lambda$ such that $A-\lambda I$ does not have dense range.
We use $\N(A)$ and $\R(A)$, respectively, to
denote the null-space and the range of $A$. 

For   $n\in\NN_0$ set
$\alpha_n(A)=\dim (\N(A)\cap \R(A^n))$
and
   $\beta_n(A)=\codim (\R(A)+\N(A^n))$, and set  $\alpha(A)=\alpha_0(A)$ and  $\beta(A)=\beta_0(A)$.
  We define the infimum of the empty set to be  $\infty$.
The    ascent $ a(A) $   and the descent $d(A)$  of $A$ are defined by
 $ a(A)=\inf \{ n\in\NN_0:\alpha_{n}(A)=0 \}$ 
and
 $ d
 (A)=\inf \{ n\in\NN_0:\beta_{n}(A)=0 \}$. 
The  essential ascent $ a_e(A) $  and  essential  descent $d_e(A)$  of $A$ are defined by
 $ a_e(A)=\inf \{ n\in\NN_0:\alpha_{n}(A)<\infty \}$
\ and
 $
  d_e(A)=\inf \{ n\in\NN_0:\beta_{n}(A)<\infty \}$.
 The descent spectrum of $A$ is
$     \sigma_{dsc} (A)=\{\lambda\in\CC:d(T-\lambda I)=\infty\}.$
The  degree of stable iteration $\dis(A)$  is defined by:
$$
\dis(A)=\inf\{n\in\NN_0:m\ge n, m\in\NN\Longrightarrow \N(A)\cap \R(A^n)= \N(A)\cap \R(A^m)\}.
$$

A  subspace $M$ of $X$ is    complemented in $X$ if it is closed and  there is a closed subspace $N$ of $X$ such that $X=M\oplus N$.
If $\alpha(A)<\infty$, $a(A)<\infty$ and $\R(A)$ is complemented, then $A$ is called {\it left Browder}. If $\beta(A)<\infty$, $d(A)<\infty$ and $\N(A)$ is complemented, then $A$ is called {\it right Browder}.
 The set of
left (right) Browder operators is denoted by $\mathcal{B}_l(X)$ $(\mathcal{B}_r(X))$, while  $\mathcal{B}(X)=\mathcal{B}_l(X)
\cap \mathcal{B}_r(X)$ is the set of Browder operators.
It is well known that  $A $ is    Drazin invertible if and only if   $a(A) < \infty $ and $d(A)<\infty$.
 $A\in L(X)$ is called {\it left  Drazin invertible} if $a(A)<\infty$ and  $\R(A)+ \N(A^{a(A)})$ is a complemented subspace of $X$. If $d(A)<\infty$ and  $\N(A)\cap \R(A^{d(A)})$ is  complemented in $X$, then  $A$ is called {\it right   Drazin invertible} \cite{Ghorbel Mnif}.
  An operator   $A\in L(X)$ is {\it upper   Drazin invertible} operator if
 $
 a(A) < \infty$ and $ \R(A^{a(A)+1})$  is closed.  If $d(A) < \infty$  and $ \R(A^{d(A)})$  is closed, then $A$ is called {\it lower    Drazin invertible}.
The   Browder  spectrum, the left Browder  spectrum,  the right Browder  spectrum, the  Drazin   spectrum, the left Drazin  spectrum,  the right Drazin  spectrum and the upper Drazin  spectrum  of $A$ are denoted by $\sigma_{\B}(A)$, $\sigma_{\B}^l(A)$,
$\sigma_{\B}^r(A)$, $\sigma_{D}(A)$, $\sigma_{D}^l(A)$,
$\sigma_{D}^r(A)$ and  $\sigma_{D}^+(A)$,
 respectively.

If $K \subset \mathbb{C}$, then $\partial K$ is the
boundary of $K$, $ \operatorname{acc}  K$ is the set of accumulation points of
$K$,  $\inter\, K$ is the set of interior points of $K$ and $\iso\, K$ is the set of isolated points of $K$.
The {\it connected hull}  of a compact set $K\subset\CC$, denoted by $\eta K$, is the complement of the unbounded
component of $\CC\setminus K$ \cite[Definition
7.10.1]{H3}.
We recall  that, for compact subsets $H,K\subset\CC$, the following implication holds (\cite[Theorem
7.10.3]{H3}):
\begin{equation}\label{spec.1}
\partial H\subset K\subset
H\Longrightarrow\partial H\subset\partial K\subset K\subset
H\subset\eta K=\eta H\ .
\end{equation}

An operator $A\in L(X)$ has the single-valued extension property at $\lambda_0\in\CC$ (SVEP at $\lambda_0$ for breviety) if for every open disc $D_{\lambda_0}$ centered at $\lambda_0$ the only analytic function $f:D_{\lambda_0}\to X$ satisfying $(A-\lambda)f(\lambda)=0 $ for all $\lambda\in \D_{\lambda_0}$ is the function $f\equiv 0$.
An operator $A\in L(X)$ is said to have the SVEP if $A$ has the SVEP at every point $\lambda\in\CC$.

An operator $A\in L(X)$ is  a {\em Riesz operator}, if $\alpha(A-\lambda I)<\infty$ and $\beta(A-\lambda I)<\infty$
 for every non-zero $\lambda \in \mathbb{C}$.
   An operator $A\in L(X)$   is called  {\it meromorphic} if its non-zero spectral points are poles of its resolvent, and in that case we shall write $A\in(\M)$. 
Every Riesz operator is meromorphic (see \cite[Corollary 3.1]{SANU}).
 We say that $A$ is {\it polinomially  Riesz} ({\it polinomially  meromorphic}) if there exists non-trivial polynomial $p$ such that $p(A)$ is Riesz (meromorphic).

 If $M$ is an $A$-invariant subspace of $X$, we define $A_M:M
\to M$ as $A_Mx=Ax, \, x \in M$.
If $M$ and $N$ are two closed
$A$-invariant subspaces of $X$ such that $X=M \oplus N$, we say that
$A$ is {\em completely reduced} by the pair $(M,N)$ denoting  it by $(M,N) \in Red(A)$.  In this case we write $A=A_M \oplus
A_N$.


An operator   $A\in L(X)$  is said to be {\em Kato} if $\R(A)$ is closed and $\N(A) \subset
\R(A^n)$ for every $ n \in \mathbb{N}$.
An operator $T \in L(X)$ is said to admit a {\em generalized Kato-Riesz
decomposition}, abbreviated as $GKRD$, if there exists a pair $(M,N)
\in Red(T)$ such that $T_M$ is Kato and $T_N$ is Riesz.
An operator $A \in L(X)$ is said to  admit a {\em generalized Kato-meromorphic
decomposition}, abbreviated to $GK(\M)D$, if there exists a pair $(M,N)
\in Red(T)$ such that $T_M$ is Kato and $T_N$ is meromorphic. The generalized Kato-Riesz spectrum and the generalized Kato-meromorphic spectrum of $A\in L(X)$ are denoted by $\sigma_{gKR}(A)$ and $\sigma_{gK\M}(A)$, respectively (see \cite{ZC}, \cite{ZD}).

  If $A$ is Kato operator and $\N(A)$ and $\R(A)$ are complemented in $X$, then $A$ is called  a {\it Saphar operator}.
An operator   $A\in L(X)$ is called  {\it essentially Saphar} if
$\N(A)\stackrel{\rm e}{\subset} \cap_{n=1}^{\infty} \R(A^n)$, and    $\N(A)$ and $\R(A)$ are complemented in $X$ \cite[p. 233]{Mu}.
 From  \cite[Theorem 2.1]{voki} and \cite[Lemma 2.1]{IEOT}  it follows that  $A$ is essentially Saphar if and only if there exists $(M,N)\in Red(A)$ such that $\dim N<\infty$, $A_N$ is nilpotent and  $ A_M$ is Saphar.
 The degree of a nilpotent operator $A$ is the smallest $d\in\NN_0$ such that $A^d=0$.
  We say that $A\in L(X)$ is {\it of Saphar type  of degree} $d$ if there exists a pair $(M,N) \in Red(A)$ such that
$A_M$ is Saphar and $A_N$ is nilpotent of degree $d$ \cite{ZS}. In that case $\dis(A)=d$.
We recall that $A\in L(X)$ is of Saphar type  of degree $d$ if and only if   $\dis (A)=d$ is finite and the subspaces $\R(A)+\N(A^d)$ and $\N(A)\cap \R(A^{d})$  are complemented \cite[Theorem 4.2]{ZS}. It is said that $A\in L(X)$  admits a {\em generalized Saphar decomposition} if  there exists a pair $(M,N)\in Red(A)$ such that $A_M$ is Saphar and $A_N$ is quasinilpotent \cite{MZ}. The   Saphar  spectrum, the essentially Saphar  spectrum,  the Saphar type  spectrum and the  generalized Saphar  spectrum of $A$ are denoted by $\sigma_{S}(A)$, $\sigma_{eS}(A)$,
$\sigma_{St}(A)$ and  $\sigma_{gS}(A)$,
 respectively.

\section{Preiliminary results}

\begin{lemma}\cite[Lemma 3.2]{ZS}\label{zatvorenost} Let $X=X_1\oplus X_2\oplus\dots\oplus X_n$ where $X_1,\ X_2,\dots ,X_n$ are closed subspaces of $X$ and let $M_i$ be a  subset of $X_i$, $i=1,\dots,n$. Then the set \break $M_1+ M_2+\dots+ M_n$ is   closed if and only if $M_i$ is closed for each $i\in\{1,\dots,n\}$.
\end{lemma}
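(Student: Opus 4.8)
The plan is to reduce the statement to the elementary topological fact that, in a product of spaces, a product of nonempty sets is closed exactly when each factor is closed. The bridge is the addition map
\[
\Phi:X_1\times X_2\times\cdots\times X_n\to X,\qquad \Phi(x_1,\dots,x_n)=x_1+x_2+\cdots+x_n,
\]
where the domain carries the product (Banach) structure, each $X_i$ being a Banach space in its own right since it is closed in $X$. Because $X=X_1\oplus\cdots\oplus X_n$, the map $\Phi$ is a continuous linear bijection between Banach spaces, hence by the open mapping theorem it is a topological isomorphism; equivalently, the associated projections $P_i:X\to X_i$, determined by $x=P_1x+\cdots+P_nx$ with $P_ix\in X_i$, are bounded. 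I would record at the outset the two facts used repeatedly: each $P_i$ is continuous, and for $m_j\in M_j\subset X_j$ one has $P_i(m_1+\cdots+m_n)=m_i$.

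For the implication that closedness of every $M_i$ forces $M_1+\cdots+M_n$ to be closed, I would argue sequentially. Let $y^{(k)}=m_1^{(k)}+\cdots+m_n^{(k)}$ with $m_i^{(k)}\in M_i$ converge to some $y\in X$. Applying the bounded projection $P_i$ gives $m_i^{(k)}=P_iy^{(k)}\to P_iy$, and closedness of $M_i$ yields $P_iy\in M_i$ for each $i$. Since $\sum_i P_i=I$, we get $y=\sum_i P_iy\in M_1+\cdots+M_n$, so the sum is closed.

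The converse is where the only genuine subtlety lies, since the $M_i$ are arbitrary subsets rather than subspaces, so I cannot simply intersect the sum with $X_i$ to recover $M_i$. Assume $M:=M_1+\cdots+M_n$ is closed (and each $M_j$ nonempty, as is automatic in every application, where the $M_i$ contain $0$). Fix $i$ and choose representatives $a_j\in M_j$ for $j\neq i$. Given $m_i^{(k)}\in M_i$ with $m_i^{(k)}\to z$, closedness of $X_i$ gives $z\in X_i$; moreover $m_i^{(k)}+\sum_{j\neq i}a_j\in M$ converges to $z+\sum_{j\neq i}a_j$, which therefore lies in $M$. Writing this limit as $m_1'+\cdots+m_n'$ with $m_l'\in M_l$ and applying $P_i$ (using $z\in X_i$ and $a_j\in X_j$) yields $z=m_i'\in M_i$, so $M_i$ is closed.

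I expect the main obstacle to be purely expository: one must not overlook that the hypothesis that $X=X_1\oplus\cdots\oplus X_n$ with the $X_i$ closed already supplies the \emph{topological} splitting (boundedness of the $P_i$) via the open mapping theorem, and one must handle the converse with care because $M_i$ need not be a subspace. An alternative, more conceptual packaging is to observe directly that $\Phi$ carries $M_1\times\cdots\times M_n$ onto $M_1+\cdots+M_n$ homeomorphically, and then invoke $\overline{M_1\times\cdots\times M_n}=\overline{M_1}\times\cdots\times\overline{M_n}$, which gives both directions simultaneously.
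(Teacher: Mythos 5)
The paper does not actually prove this lemma; it imports it by citation from \cite[Lemma 3.2]{ZS}, so there is no internal proof to compare against, and your argument must be judged on its own terms --- on which it is correct. Your route is the natural one: the algebraic decomposition $X=X_1\oplus\cdots\oplus X_n$ into \emph{closed} subspaces is automatically topological, since the addition map $\Phi$ is a continuous linear bijection of Banach spaces and the open mapping theorem makes the projections $P_i$ bounded; the forward direction then follows by projecting a convergent sequence, and your converse correctly confronts the one genuine subtlety, namely that the $M_i$ are arbitrary subsets, so one cannot recover $M_i$ as $(M_1+\cdots+M_n)\cap X_i$ --- your device of translating by fixed representatives $a_j\in M_j$, $j\neq i$, and then applying $P_i$ is exactly the right fix. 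You were also right to flag nonemptiness: as literally stated the converse fails if some $M_j=\emptyset$ (the sum is then empty, hence closed, no matter how badly behaved the other $M_i$ are), but this is harmless in every application in the paper, where each $M_i$ contains $0$. Your alternative packaging --- $\Phi$ restricts to a homeomorphism of $M_1\times\cdots\times M_n$ onto $M_1+\cdots+M_n$, and a product of nonempty sets is closed iff each factor is, via $\overline{M_1\times\cdots\times M_n}=\overline{M_1}\times\cdots\times\overline{M_n}$ --- is a clean way to get both directions at once and would serve equally well.
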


\begin{lemma}\cite[Lemma 3.3]{ZS} \label{glavna lema} Let $X=X_1\oplus X_2\oplus \dots\oplus X_n$ where $X_1,\ X_2,\dots ,X_n$ are closed subspaces of $X$ and let $M_i$ be a  subspace of $X_i$, $i=1,\dots,n$. Then the subspace   $M_1\oplus M_2\oplus\dots\oplus M_n$ is a  complemented subspace of $X$ if and only if $M_i$ is a complemented subspace of $X_i$ for each $i\in\{1,\dots,n\}$.
\end{lemma}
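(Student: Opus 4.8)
The plan is to prove the two implications separately, treating arbitrary $n$ directly rather than by induction, since the only subtle point is topological and is no harder for general $n$ than for $n=2$. Throughout I will use the paper's convention that ``complemented'' means ``closed with a closed complement,'' and I will lean on Lemma \ref{zatvorenost} for every closedness assertion about a sum of subspaces sitting in distinct coordinates.

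For the easy implication ($\Leftarrow$), suppose each $M_i$ is complemented in $X_i$, say $X_i=M_i\oplus N_i$ with $N_i$ closed in $X_i$. I would set $M=M_1+\dots+M_n$ and $N=N_1+\dots+N_n$ and check that $N$ is a closed complement of $M$. Closedness of both $M$ and $N$ is exactly where Lemma \ref{zatvorenost} enters: since every $M_i$ and every $N_i$ is closed, the two sums are closed. That $X=M\oplus N$ algebraically is then routine: spanning follows by splitting each coordinate $x_i\in X_i$ as $m_i+n_i$, and directness follows because a relation $\sum_i m_i=\sum_i n_i$ forces $m_i=n_i$ coordinatewise (by directness of $X=X_1\oplus\dots\oplus X_n$), whence $m_i=n_i\in M_i\cap N_i=\{0\}$.

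For the harder implication ($\Rightarrow$), suppose $M=M_1\oplus\dots\oplus M_n$ is complemented in $X$. Then $M$ is closed, so Lemma \ref{zatvorenost} already gives that each $M_i$ is closed; the work is to produce, for each fixed $i$, a closed complement of $M_i$ inside $X_i$. My plan is to construct a bounded idempotent $E_i$ on $X_i$ with range $M_i$, so that $\ker E_i$ is the desired complement. I would take the bounded projection $P$ of $X$ onto $M$ (available because a splitting of a Banach space into two closed subspaces yields bounded projections, by the closed graph theorem) together with a bounded projection $q_i$ of $M$ onto $M_i$; the latter exists because inside the Banach space $M$ one has $M=M_i\oplus\bigl(\sum_{j\neq i}M_j\bigr)$, and $\sum_{j\neq i}M_j$ is closed again by Lemma \ref{zatvorenost}. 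Setting $E_i=q_i\,P|_{X_i}$, one verifies that $E_i$ maps $X_i$ into $M_i\subseteq X_i$, fixes $M_i$ pointwise, and is therefore a bounded idempotent on $X_i$ with range exactly $M_i$; hence $X_i=M_i\oplus\ker E_i$ with $\ker E_i$ closed.

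The step I expect to be the main obstacle is precisely the closedness question buried in this forward direction. The naive candidate for a complement of $M_i$ in $X_i$, namely $X_i\cap\bigl(N+\sum_{j\neq i}M_j\bigr)$ for a closed complement $N$ of $M$, is algebraically correct but its closedness cannot be extracted from Lemma \ref{zatvorenost} alone, since $N$ is not contained in any single coordinate $X_j$. Routing the argument through bounded idempotents sidesteps this difficulty entirely, because ranges and kernels of bounded projections are automatically closed; the only genuinely topological input then needed is the standard fact that an algebraic direct sum of two closed subspaces of a Banach space is a topological direct sum.
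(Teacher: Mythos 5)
Your proof is correct. Note first that the paper under review does not actually prove this lemma: it is imported by citation from \cite[Lemma 3.3]{ZS}, so there is no internal proof to compare against, and your argument must stand on its own --- which it does. The backward direction (coordinatewise splitting $X_i=M_i\oplus N_i$, then invoking Lemma \ref{zatvorenost} for closedness of $M_1+\dots+M_n$ and $N_1+\dots+N_n$, with directness inherited from $X=X_1\oplus\dots\oplus X_n$) is exactly the routine computation one expects. In the forward direction, your construction is sound at every step: $M$ closed gives each $M_i$ closed via Lemma \ref{zatvorenost}; the projection $P$ of $X$ onto $M$ along its closed complement is bounded by the closed graph theorem; $q_i$ exists and is bounded because $M$ is itself a Banach space decomposing as $M_i\oplus\bigl(\sum_{j\neq i}M_j\bigr)$ with both summands closed (again Lemma \ref{zatvorenost}); and $E_i=q_i P|_{X_i}$ is a bounded idempotent on $X_i$ fixing $M_i$ with range exactly $M_i$, so $X_i=M_i\oplus\ker E_i$ with $\ker E_i$ closed. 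One small remark on your closing paragraph: the ``naive candidate'' $X_i\cap\bigl(N+\sum_{j\neq i}M_j\bigr)$ is in fact closed --- it equals $X_i\cap\ker(q_iP)$, the intersection of $X_i$ with the kernel of a bounded operator on $X$, as $X=M_i\oplus\bigl(\sum_{j\neq i}M_j\bigr)\oplus N$ --- so it would also have served as the complement; but establishing its closedness requires essentially the same bounded-projection machinery you deploy, so your diagnosis that Lemma \ref{zatvorenost} alone cannot deliver it, and your decision to route through idempotents, are both apt.
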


\begin{lemma}\label{bot}\cite[Lemma 2.6]{MZ}  If $M$ is a  complemented subspace of $X$, then $M^{\bot}$ is a  complemented subspace of $X^\prime$.
\end{lemma}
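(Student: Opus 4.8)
The plan is to exploit the adjoint of a projection. Since $M$ is complemented, by definition there is a closed subspace $N$ with $X=M\oplus N$, and the associated linear projection $P$ onto $M$ along $N$ is bounded: it is idempotent with range $M$ and kernel $N$, both closed, so the closed graph theorem guarantees $P\in L(X)$. Thus $P$ satisfies $P^2=P$, $\R(P)=M$ and $\N(P)=N$.

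Next I would pass to the dual operator $P^\prime\in L(X^\prime)$. Since the dual of a bounded operator is again bounded and $(P^2)^\prime=(P^\prime)^2$, the identity $P^2=P$ yields $(P^\prime)^2=P^\prime$, so $P^\prime$ is a bounded projection on $X^\prime$. Every bounded idempotent in $L(X^\prime)$ induces a topological direct sum $X^\prime=\R(P^\prime)\oplus\N(P^\prime)$ into closed, mutually complementary subspaces.

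It then remains to identify $\N(P^\prime)$ with $M^{\bot}$. For $f\in X^\prime$ one has $P^\prime f=f\circ P$, so $P^\prime f=0$ precisely when $f$ vanishes on $\R(P)=M$, that is, when $f\in M^{\bot}$. Hence $\N(P^\prime)=M^{\bot}$, and the decomposition above exhibits $M^{\bot}$ as a complemented subspace of $X^\prime$, with complement $\R(P^\prime)=N^{\bot}$.

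There is no serious obstacle here; the argument is essentially a routine duality computation. The only points that require care are the passage from ``$M$ is complemented'' to the existence of a genuinely \emph{bounded} projection $P$ (invoking the closed graph theorem), and the observation that idempotence is preserved under taking adjoints, which is exactly what makes $P^\prime$ split $X^\prime$ into the required complementary pair.
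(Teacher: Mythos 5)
Your argument is correct: the paper itself offers no proof of this lemma, quoting it from \cite[Lemma 2.6]{MZ}, and your route --- pass from the decomposition $X=M\oplus N$ to a bounded projection $P$ with $\R(P)=M$, $\N(P)=N$ via the closed graph theorem, note that $P^\prime$ is a bounded idempotent on $X^\prime$, and identify $\N(P^\prime)=M^{\bot}$ (with complement $\R(P^\prime)=N^{\bot}$) --- is exactly the standard duality proof one expects behind that citation. No gaps: the two delicate points you flag (boundedness of $P$ under the paper's definition of complemented, and preservation of idempotence under adjoints) are precisely the ones that need saying, and you handle both.
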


\begin{lemma}\label{complemented in}\cite[Lemma 3.5]{ZS} If  $M$ is complemented subspace of $X$ and  $M_1$ is  a closed subspace of $X$ such that $M\subset M_1$, then $M$ is complemented in $M_1$.
\end{lemma}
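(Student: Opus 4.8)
The plan is to produce an explicit closed complement of $M$ inside $M_1$ by intersecting a given complement of $M$ in $X$ with $M_1$. Since $M$ is complemented in $X$, I would begin by fixing a closed subspace $N$ of $X$ with $X = M \oplus N$. The natural candidate for a complement of $M$ relative to $M_1$ is $N_1 := N \cap M_1$, and the whole task reduces to verifying that $M_1 = M \oplus N_1$ with $N_1$ closed.

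First I would record the easy facts. The subspace $N_1$ is closed, being the intersection of the two closed subspaces $N$ and $M_1$, and $N_1 \subseteq M_1$ by construction. Moreover the sum $M + N_1$ is direct, since $M \cap N_1 \subseteq M \cap N = \{0\}$.

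The one substantive step is the identity $M + N_1 = M_1$. The inclusion $M + N_1 \subseteq M_1$ is immediate, because both $M \subseteq M_1$ (by hypothesis) and $N_1 \subseteq M_1$. For the reverse inclusion I would take an arbitrary $x \in M_1$ and use the decomposition $X = M \oplus N$ to write $x = m + n$ with $m \in M$ and $n \in N$; since $M \subseteq M_1$ we have $m \in M_1$, and then $n = x - m \in M_1$ because $M_1$ is a subspace, whence $n \in N \cap M_1 = N_1$. Thus $x = m + n \in M + N_1$. This is precisely the modular (Dedekind) law for the chain $M \subseteq M_1$ together with $N$, specialized to the case $M + N = X$.

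I do not anticipate any genuine obstacle: the argument is elementary, and the only point requiring even mild care is the closedness of $N_1$, which follows at once. Combining the three observations gives $M_1 = M \oplus N_1$ with $N_1$ a closed subspace of $M_1$, which is exactly the statement that $M$ is complemented in $M_1$.
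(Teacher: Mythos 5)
Your proof is correct: taking a closed complement $N$ of $M$ in $X$ and verifying, via the modular law, that $N_1 = N \cap M_1$ is a closed algebraic complement of $M$ in $M_1$ is exactly what the paper's definition of ``complemented'' requires, and it is the standard argument for this fact. The paper itself gives no proof but cites \cite[Lemma 3.5]{ZS}, whose argument is essentially the same (equivalently phrased by restricting the bounded projection of $X$ onto $N$ along $M$ to $M_1$), so your approach matches the intended one.
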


\begin{lemma}\cite[Lemma 3.11]{ZS} \label{suma left} For  $A\in L(X)$  let there exists a pair $(M,N) \in Red(A)$.
Then   $A$ is left (resp., right) invertible
  if and only if $A_M$ and $A_N$ are left (resp., right) invertible.
\end{lemma}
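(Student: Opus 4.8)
The plan is to reduce left (resp. right) invertibility to the existence of a bounded one-sided inverse in $L(X)$ and then to transport such an inverse across the decomposition $X=M\oplus N$ by means of the canonical projections. Recall that $A\in L(X)$ is left invertible precisely when there is $B\in L(X)$ with $BA=I$, and right invertible precisely when there is $B\in L(X)$ with $AB=I$. Since $M$ and $N$ are closed and $X=M\oplus N$, the projection $P\in L(X)$ onto $M$ along $N$ and its complement $Q=I-P$ onto $N$ along $M$ are bounded (the graph of $P$ is closed); and because both $M$ and $N$ are $A$-invariant, writing $x=m+n$ with $m\in M$, $n\in N$ shows directly that $AP=PA$ and $AQ=QA$. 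These commutation relations, together with $\R(P)=M$ and $\R(Q)=N$, are the only structural facts I will use.

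For the forward implication in the left case, I would assume $BA=I$ and set $C_M=(PB)|_M$ and $C_N=(QB)|_N$, bounded operators mapping $M$ into $M$ and $N$ into $N$ respectively. For $x\in M$ one has $A_Mx=Ax$, whence $C_MA_Mx=PBAx=Px=x$; the computation for $C_NA_N=I_N$ is identical. In the right case I would assume $AB=I$ and take the same $C_M,C_N$; now for $x\in M$, using $PBx\in M$ and the commutation $AP=PA$, one gets $A_MC_Mx=A(PBx)=P(ABx)=Px=x$, and analogously $A_NC_N=I_N$. Thus $A_M$ and $A_N$ inherit the corresponding one-sided invertibility.

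Conversely, suppose $C_M\in L(M)$ and $C_N\in L(N)$ are one-sided inverses of $A_M$ and $A_N$. I would assemble a global inverse by $B=C_MP+C_NQ$, a bounded operator on $X$ (note $C_M$ acts on $Px\in M$ and $C_N$ on $Qx\in N$). Writing $x=m+n$ and using that $A$ preserves the summands, so that $P(Ax)=A_Mm$ and $Q(Ax)=A_Nn$, a short substitution gives $BAx=C_MA_Mm+C_NA_Nn=m+n=x$ in the left case, hence $BA=I$; the right case is entirely dual, computing $ABx=A_MC_M(Px)+A_NC_N(Qx)=Px+Qx=x$ and yielding $AB=I$. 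This establishes left (resp. right) invertibility of $A$.

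I do not expect any serious obstacle: the statement is essentially a bookkeeping exercise. The only point that genuinely requires care is the boundedness and the commutation of $P$ and $Q$ — boundedness following from the closedness of $M$ and $N$ via the closed graph theorem, and commutation with $A$ from the $A$-invariance of both summands. Once these are in place, every verification of the one-sided inverse identities is a direct substitution, and the argument treats the left and right cases symmetrically, with the commutation $AP=PA$ entering only in the right-hand computations.
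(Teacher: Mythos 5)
Your proof is correct, and it takes a genuinely different route from the paper's. You work algebraically: you compress a global one-sided inverse $B$ by the bounded projections $P$ and $Q$ to obtain one-sided inverses $(PB)|_M$ and $(QB)|_N$ of $A_M$ and $A_N$, and in the converse direction you glue $B=C_MP+C_NQ$, with the commutation $AP=PA$ (a consequence of the $A$-invariance of both summands) doing the only real work, and only in the right-invertibility computations. The paper instead argues geometrically: it invokes the characterization that a bounded operator is left (resp.\ right) invertible if and only if it is injective (resp.\ surjective) with complemented range (resp.\ null-space), combines the equalities $\N(A)=\N(A_M)\oplus\N(A_N)$ and $\R(A)=\R(A_M)\oplus\R(A_N)$ with Lemma \ref{glavna lema} (a direct sum $M_1\oplus M_2$ is complemented in $X$ if and only if each $M_i$ is complemented in $X_i$), and reads off the conclusion. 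Your construction is more elementary and self-contained: it needs no complementation lemma and no characterization theorem beyond the algebraic definition of one-sided invertibility, which is indeed the operative one here (the equivalence with the geometric characterization is precisely what the paper's proof leans on instead). What the paper's route buys is that it isolates how injectivity/surjectivity and complementation of $\N(A)$, $\R(A)$ split across the summands --- bookkeeping of exactly the kind reused throughout the paper in the analogous decomposition lemmas for Saphar, Saphar-type and left/right Browder operators --- whereas your explicit inverses would have to be rebuilt from scratch in each of those settings.
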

\begin{proof} From the equalities $\N(A)=\N(A_M)\oplus\N(A_N)$, $\R(A)=\R(A_M)\oplus\R(A_N)$ and Lemma \ref{glavna lema} it follows that $\N(A)$  (resp. $\R(A)$) is complemented if and only if $\N(A_M)$ and $\N(A_N)$  (resp. $\R(A_M)$ and $\R(A_N)$) are complemented subspaces in $M$ and $N$, respectively. Since a bounded linear  operator is left (resp. right) invertible if and only if it is injective (resp., surjective) with a complemented range (resp. null-space), we get the desired conclusion.
\end{proof}

\begin{lemma}\label{Riesz}{}
For  $A\in L(X)$  let there exists a pair $(M,N)\in Red (A)$. Then
 
 \snoi {\rm (i)} \cite[Lemma 2.11]{ZDHB} 
  $A$ is
Riesz  if and only if $A_M$ and $A_N$ are Riesz.

\snoi {\rm (ii)} \cite[Lemma 2]{ZD} $A$ is meromorphic if and only if $A_M$ and $A_N$ are meromorphic.

\snoi {\rm (iii)} \cite[Lemma 3.11]{ZS}   $A$ is Saphar  if and only if $A_M$ and $A_N$ are Saphar.

\snoi {\rm (iv)}    \cite[Lemma 2.1]{IEOT}  $A$ is
left (resp., right) Browder  if and only if $A_M$ and $A_N$ are left (resp., right) Browder.
\end{lemma}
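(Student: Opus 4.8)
The plan is to exploit the single structural fact that, whenever $(M,N)\in Red(A)$ and $\lambda\in\CC$, the pair $(M,N)$ also completely reduces $A-\lambda I$, so that $(A-\lambda I)^k=(A_M-\lambda I)^k\oplus(A_N-\lambda I)^k$ for every $k$. From this, every quantity entering the four definitions splits over the summands: $\N((A-\lambda I)^k)=\N((A_M-\lambda I)^k)\oplus\N((A_N-\lambda I)^k)$ and $\R((A-\lambda I)^k)=\R((A_M-\lambda I)^k)\oplus\R((A_N-\lambda I)^k)$, whence $\alpha(A-\lambda I)=\alpha(A_M-\lambda I)+\alpha(A_N-\lambda I)$, $\beta(A-\lambda I)=\beta(A_M-\lambda I)+\beta(A_N-\lambda I)$, $a(A-\lambda I)=\max\{a(A_M-\lambda I),a(A_N-\lambda I)\}$ and $d(A-\lambda I)=\max\{d(A_M-\lambda I),d(A_N-\lambda I)\}$. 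The topological side conditions (closedness of a range, complementedness of a kernel or a range) are reduced to the summands by Lemmas \ref{zatvorenost} and \ref{glavna lema}. Each equivalence then follows by combining the relevant splittings; I would prove (i), then (iv), then (iii), and finally (ii).

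For (i), the definition requires only that $\alpha(A-\lambda I)<\infty$ and $\beta(A-\lambda I)<\infty$ for all $\lambda\neq 0$. Since a sum of two elements of $\NN_0\cup\{\infty\}$ is finite exactly when both summands are finite, the additivity of $\alpha$ and $\beta$ gives the equivalence immediately. For (iv), left Browderness of $A$ means $\alpha(A)<\infty$, $a(A)<\infty$ and $\R(A)$ complemented: the first is handled by additivity of $\alpha$, the second by $a(A)=\max\{a(A_M),a(A_N)\}$, and the third by Lemma \ref{glavna lema} applied to $\R(A)=\R(A_M)\oplus\R(A_N)$. The right-hand case is dual, using $\beta$, $d$ and $\N(A)=\N(A_M)\oplus\N(A_N)$.

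For (iii), I would unwind the definition of a Saphar operator into its three ingredients. Closedness of $\R(A)=\R(A_M)\oplus\R(A_N)$ is equivalent to closedness of each $\R(A_M)\subseteq M$ and $\R(A_N)\subseteq N$ by Lemma \ref{zatvorenost}; the Kato inclusion $\N(A)\subseteq\bigcap_{n}\R(A^n)$ splits componentwise, since $\N(A_M),\R(A_M^n)\subseteq M$ while $\N(A_N),\R(A_N^n)\subseteq N$ and $M\cap N=\{0\}$; and the complementedness of $\N(A)$ and of $\R(A)$ reduces to that of the respective summands by Lemma \ref{glavna lema}. Assembling these three shows that $A$ is Saphar iff both $A_M$ and $A_N$ are.

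Part (ii) is the step I expect to require the most thought, because meromorphicity is stated analytically — every nonzero spectral point is a pole of the resolvent — and poles do not obviously respect direct sums. My plan is to convert it into a local algebraic condition: $A$ is meromorphic iff $\sigma_D(A)\subseteq\{0\}$, i.e. iff $a(A-\lambda I)$ and $d(A-\lambda I)$ are both finite for every $\lambda\neq 0$. This equivalence rests on the classical fact that, for $\lambda_0\in\sigma(A)$, the point $\lambda_0$ is a pole of the resolvent of order $p$ exactly when $a(A-\lambda_0 I)=d(A-\lambda_0 I)=p<\infty$. Once meromorphicity is phrased this way, it decomposes just as in (iv): from $a(A-\lambda I)=\max\{a(A_M-\lambda I),a(A_N-\lambda I)\}$ and the analogous identity for descents one gets $\sigma_D(A)=\sigma_D(A_M)\cup\sigma_D(A_N)$, so $\sigma_D(A)\subseteq\{0\}$ iff both summand Drazin spectra lie in $\{0\}$, iff both $A_M$ and $A_N$ are meromorphic. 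The one point needing care — that a nonzero $\lambda_0$ might belong to the spectrum of only one summand — is absorbed automatically, since the regular summand contributes ascent and descent equal to $0$ at $\lambda_0$; this is precisely why the $\max$-formula delivers the clean equivalence.
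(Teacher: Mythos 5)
The paper does not actually prove this lemma: all four parts are quoted from the literature, with (i) cited to [ZDHB, Lemma 2.11], (ii) to [ZD, Lemma 2], (iii) to [ZS, Lemma 3.11] and (iv) to [IEOT, Lemma 2.1]. Your proposal is a correct, self-contained reconstruction, and it is in exactly the spirit of the splitting arguments the paper \emph{does} spell out for the neighbouring statements (Lemmas \ref{suma left}, \ref{Saphar type} and \ref{left decom}): the single observation that $(M,N)\in Red(A-\lambda I)$ for every $\lambda\in\CC$, so that $\N((A-\lambda I)^k)=\N((A_M-\lambda I_M)^k)\oplus\N((A_N-\lambda I_N)^k)$ and likewise for ranges, gives the additivity of $\alpha$ and $\beta$ and the $\max$-formulas for ascent and descent, and Lemmas \ref{zatvorenost} and \ref{glavna lema} convert closedness and complementedness of the split subspaces into the corresponding conditions on the summands. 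Your treatments of (i), (iii) and (iv) are routine given these identities (in (iii) your remark that the Kato inclusion splits because $M\cap N=\{0\}$ is the right justification, and in (i) note that the paper's definition of Riesz only involves $\alpha$ and $\beta$, so no closedness issue even arises). The only step carrying real content is your reformulation in (ii) of meromorphicity as $\sigma_D(A)\subseteq\{0\}$ via the classical fact that $\lambda_0$ is a pole of the resolvent exactly when $a(A-\lambda_0 I)=d(A-\lambda_0 I)<\infty$ (finiteness of both already forcing equality and isolation of $\lambda_0$ in the spectrum); this is legitimate and is in fact the same device the paper itself uses elsewhere, e.g.\ where it asserts $\sigma_D(A_N)\subseteq\{0\}$ for meromorphic $A_N$ and where Theorem \ref{nova} passes from meromorphicity to Drazin invertibility of $T_N-\lambda I_N$ for $\lambda\neq 0$. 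What your route buys is independence from the four cited references at the cost of invoking the classical pole characterization; what the paper's route buys is brevity. I see no gap.
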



%

\begin{lemma}\label{Saphar type}  For  $A\in L(X)$  let there exists a pair $(M,N) \in Red(A)$.
Then   $A$ is of Saphar type  if and only if $A_M$ and $A_N$ are of Saphar type.
\end{lemma}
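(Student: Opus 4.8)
The plan is to prove both implications through the characterisation of Saphar type operators by the degree of stable iteration (the cited \cite[Theorem 4.2]{ZS}): an operator $T\in L(X)$ is of Saphar type if and only if $\dis(T)<\infty$ and both subspaces $\R(T)+\N(T^{\dis(T)})$ and $\N(T)\cap\R(T^{\dis(T)})$ are complemented. The sufficiency is the routine direction. Assuming $A_M$ and $A_N$ of Saphar type, I would pick $(M_1,M_2)\in Red(A_M)$ and $(N_1,N_2)\in Red(A_N)$ with $(A_M)_{M_1}$, $(A_N)_{N_1}$ Saphar and $(A_M)_{M_2}$, $(A_N)_{N_2}$ nilpotent, and set $P=M_1\oplus N_1$, $Q=M_2\oplus N_2$. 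By Lemma \ref{zatvorenost} these are closed, they are $A$-invariant, and $X=(M_1\oplus M_2)\oplus(N_1\oplus N_2)=P\oplus Q$, so $(P,Q)\in Red(A)$. Then $A_P=(A_M)_{M_1}\oplus(A_N)_{N_1}$ is Saphar by Lemma \ref{Riesz}(iii), while $A_Q=(A_M)_{M_2}\oplus(A_N)_{N_2}$ is nilpotent (of degree the maximum of the two degrees), so $A$ is of Saphar type.

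For necessity I would first record, from $A=A_M\oplus A_N$, the identities $\N(A^n)=\N(A_M^n)\oplus\N(A_N^n)$ and $\R(A^n)=\R(A_M^n)\oplus\R(A_N^n)$, whence, because the summands lie in the independent subspaces $M$ and $N$, $\N(A)\cap\R(A^n)=(\N(A_M)\cap\R(A_M^n))\oplus(\N(A_N)\cap\R(A_N^n))$ and $\R(A)+\N(A^n)=(\R(A_M)+\N(A_M^n))\oplus(\R(A_N)+\N(A_N^n))$. From the first of these, stabilisation of $\N(A)\cap\R(A^n)$ is equivalent to simultaneous stabilisation of both components, so $\dis(A)=\max\{\dis(A_M),\dis(A_N)\}$; put $d=\dis(A)$, $d_M=\dis(A_M)$, $d_N=\dis(A_N)$, so that $d_M,d_N\le d<\infty$. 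Applying Lemma \ref{glavna lema} to the two direct-sum identities at the common level $n=d$, the hypotheses that $\R(A)+\N(A^d)$ and $\N(A)\cap\R(A^d)$ are complemented split into the statements that $\R(A_M)+\N(A_M^d)$, $\N(A_M)\cap\R(A_M^d)$ are complemented in $M$, and likewise for $A_N$ in $N$.

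It then remains to pass from the level $d$ to the intrinsic level $d_M$ (resp. $d_N$) required by the characterisation, and this is where the main obstacle lies. For the intersection it is immediate: since $d\ge d_M$ and $\N(A_M)\cap\R(A_M^n)$ is constant for $n\ge d_M$, we have $\N(A_M)\cap\R(A_M^{d_M})=\N(A_M)\cap\R(A_M^{d})$, already known to be complemented. For the sum $\R(A_M)+\N(A_M^{d_M})$ the two levels need not define the same subspace, since $\N(A_M^{d_M})$ may be a proper part of $\N(A_M^{d})$. I would resolve this using the classical equality of the gap sequences,
\[
\frac{\N(T)\cap\R(T^n)}{\N(T)\cap\R(T^{n+1})}\;\cong\;\frac{\R(T)+\N(T^{n+1})}{\R(T)+\N(T^n)}\qquad(T\in L(X),\ n\in\NN),
\]
which shows that the increasing sequence $\R(T)+\N(T^n)$ stabilises at exactly the same index as the decreasing sequence $\N(T)\cap\R(T^n)$, namely at $\dis(T)$. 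Applied to $T=A_M$ this yields $\R(A_M)+\N(A_M^{d_M})=\R(A_M)+\N(A_M^{d})$, which is complemented by the previous paragraph; together with the intersection condition this gives, via \cite[Theorem 4.2]{ZS}, that $A_M$ is of Saphar type of degree $d_M$. The identical argument applied to $A_N$ finishes the proof. The only non-elementary input is the stabilisation fact above; everything else is the bookkeeping of direct sums handled by Lemmas \ref{zatvorenost}, \ref{glavna lema} and \ref{Riesz}.
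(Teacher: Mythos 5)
Your proposal is correct and takes essentially the same route as the paper's proof: the same two direct-sum identities for $\N(A)\cap\R(A^n)$ and $\R(A)+\N(A^n)$, the same observation that $\dis(A)=\max\{\dis(A_M),\dis(A_N)\}$, the same appeal to Lemma \ref{glavna lema} at the common level $d=\dis(A)$, and the same conclusion via \cite[Theorem 4.2]{ZS}. The only differences are in your favour: your separate direct-sum construction for the easy direction is a harmless variant of the paper's uniform two-way argument, and your explicit use of the classical equality of the gap quotients --- which forces $\R(A_M)+\N(A_M^n)$ to stabilize at exactly $\dis(A_M)$, so that the level-$d$ subspaces coincide with the level-$\dis(A_M)$ (resp.\ $\dis(A_N)$) subspaces required by \cite[Theorem 4.2]{ZS} --- spells out a level-matching step that the paper's proof applies silently.
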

\begin{proof} For $n\in\NN_0$ it holds

\parbox{11cm}{\begin{eqnarray*}
\N(A)\cap \R(A^n)& =&(\N(A_M)\cap \R(A_M^n))\oplus (\N(A_N))\cap  \R(A^n_N)),\\ \R(A)+ \N(A^n)& =&(\R(A_M)+ \N(A_M^n))\oplus (\R(A_N))+  \N(A^n_N)).
\label{degree}
\end{eqnarray*}}  \hfill
\parbox{1cm}{\begin{eqnarray}\end{eqnarray}}

\noindent which implies that  $\dis(A)<\infty$ if and only if $\dis(A_M)<\infty$ and $\dis(A_N)<\infty$, and in that case $\dis(A)=\max\{\dis(A_M),\dis(A_N)\}$.
Let $d=\dis(A)<\infty$. From \eqref{degree} and Lemma \ref{glavna lema} it follows that
 $\N(A)\cap \R(A^d)$ and $\R(A)+ \N(A^d)$ are complemented if and only if $\N(A_M)\cap \R(A_M^d)$ and $\R(A_M)+ \N(A_M^d)$ are complemented subspaces of $ M$,  and $\N(A_N)\cap \R(A_N^d)$ and $\R(A_N)+ \N(A_N^d)$ are complemented subspaces of $ N$. Now according  to \cite[Theorem 4.2]{ZS} we conclude that  $A$ is of Saphar type  if and only if $A_M$ and $A_N$ are of Saphar type.
\end{proof}


\begin{lemma}\label{left decom}  For  $A\in L(X)$  let there exists a pair $(M,N) \in Red(A)$.
Then   $A$ is left (right) Drazin invertible  if and only if $A_M$ and $A_N$ are left (right) Drazin invertible.
\end{lemma}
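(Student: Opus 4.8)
The proof will rest entirely on the two direct-sum formulas recorded in \eqref{degree}: since $(M,N)\in Red(A)$ gives $\N(A^n)=\N(A_M^n)\oplus\N(A_N^n)$ and $\R(A^n)=\R(A_M^n)\oplus\R(A_N^n)$ for every $n\in\NN_0$, taking dimensions in the first identity of \eqref{degree} and codimensions in the second yields $\alpha_n(A)=\alpha_n(A_M)+\alpha_n(A_N)$ and $\beta_n(A)=\beta_n(A_M)+\beta_n(A_N)$. I will carry out the left case; the right case is completely dual.

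First, handle the ascent. Because $\alpha_n(A)=\alpha_n(A_M)+\alpha_n(A_N)$ and each of the sequences $(\alpha_n)_n$ is nonincreasing and stays $0$ once it reaches $0$ (as $\R(A^{n+1})\subseteq\R(A^n)$), we have $\alpha_n(A)=0$ if and only if $\alpha_n(A_M)=0$ and $\alpha_n(A_N)=0$. Hence $a(A)<\infty$ if and only if $a(A_M)<\infty$ and $a(A_N)<\infty$, and in that case $a(A)=\max\{a(A_M),a(A_N)\}$. Assuming these ascents finite, set $p=a(A)$.

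Next, handle the complementation of $\R(A)+\N(A^{p})$. The finiteness of $a(A_M)$ means $\alpha_n(A_M)=0$ for $n\ge a(A_M)$, which is equivalent to $\N(A_M^{n})=\N(A_M^{n+1})$ for such $n$, so the kernels stabilize: since $p\ge a(A_M)$ we get $\N(A_M^{p})=\N(A_M^{a(A_M)})$, and likewise $\N(A_N^{p})=\N(A_N^{a(A_N)})$. Substituting this into the second identity of \eqref{degree} with $n=p$ shows that $\R(A)+\N(A^{p})$ is the $X$-direct sum of $\R(A_M)+\N(A_M^{a(A_M)})$ and $\R(A_N)+\N(A_N^{a(A_N)})$. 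By Lemma \ref{glavna lema} this sum is complemented in $X$ if and only if each of these two subspaces is complemented in $M$ and in $N$, respectively. Combining with the ascent statement, $A$ is left Drazin invertible if and only if $A_M$ and $A_N$ both are. The right case is obtained verbatim by working with $\beta_n$, the descent $d(A)=\max\{d(A_M),d(A_N)\}$, the range stabilization $\R(A_M^{n})=\R(A_M^{d(A_M)})$ for $n\ge d(A_M)$, the first identity of \eqref{degree}, and Lemma \ref{glavna lema}.

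The one step that is not mere bookkeeping is the replacement of the global exponent $p=a(A)$ by the individual exponents $a(A_M)$ and $a(A_N)$ inside the two summands: since $a(A)$ is the maximum of the component ascents and may exceed one of them, a term-by-term matching is not immediate and must go through the kernel-stabilization property above. Once that replacement is justified, the statement follows formally from \eqref{degree} and Lemma \ref{glavna lema}.
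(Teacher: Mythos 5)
Your proof is correct, but it takes a different route from the paper's. The paper does not argue from the definition at all: it invokes the characterization from \cite[Corollary 4.23]{ZS} (an operator is left Drazin invertible if and only if it is of Saphar type and has finite ascent), observes that $a(A)<\infty$ iff $a(A_M)<\infty$ and $a(A_N)<\infty$, and then reduces everything to Lemma \ref{Saphar type}, which is the statement that being of Saphar type passes to and from the summands; the right-handed case is handled the same way via \cite[Corollary 4.24]{ZS}. You instead verify the definition directly: additivity of $\alpha_n$ and $\beta_n$ across the decomposition gives $a(A)=\max\{a(A_M),a(A_N)\}$, and your kernel-stabilization step $\N(A_M^{p})=\N(A_M^{a(A_M)})$ for $p=a(A)\ge a(A_M)$ is exactly what is needed to rewrite $\R(A)+\N(A^{p})$ as the direct sum of the two subspaces $\R(A_M)+\N(A_M^{a(A_M)})$ and $\R(A_N)+\N(A_N^{a(A_N)})$ appearing in the definitions for the summands, after which Lemma \ref{glavna lema} finishes, as you say; the dual range-stabilization handles the right case. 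You are right to flag the exponent mismatch as the one genuine point — it is silently absorbed in the paper's approach, because the Saphar-type characterization via $\dis$ in \cite[Theorem 4.2]{ZS} does the analogous bookkeeping once and for all. What your argument buys is self-containedness: it needs only \eqref{degree}, Lemma \ref{glavna lema}, and elementary ascent/descent facts, avoiding the external Corollaries 4.23 and 4.24 of \cite{ZS}; what the paper's route buys is brevity and reuse of machinery already established for Lemma \ref{Saphar type}. One cosmetic remark: the dimension identities $\alpha_n(A)=\alpha_n(A_M)+\alpha_n(A_N)$ and $\beta_n(A)=\beta_n(A_M)+\beta_n(A_N)$ are more than you need — for the ascent and descent statements only the equivalence $\alpha_n(A)=0\Longleftrightarrow\alpha_n(A_M)=\alpha_n(A_N)=0$ (and its $\beta_n$ analogue) is used, and that follows from \eqref{degree} without counting dimensions.
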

\begin{proof} From \cite[Corollary 4.23]{ZS} we have that $A$ is left  Drazin invertible  if and only if $A$ is of Saphar type and $a(A)<\infty$.
As $a(A)<\infty$ if and only if $a(A_M)<\infty$ and $a(A_N)<\infty$, by using Lemma \ref{Saphar type} and again \cite[Corollary 4.23]{ZS} we conclude  $A$ is left  Drazin invertible  if and only if $A_M$ and $A_N$ are left  Drazin invertible.

The assertion for the case of right  Drazin invertible operators is proved in a similar way by using \cite[Corollary 4.24]{ZS}.
\end{proof}

\begin{lemma} \label{left Drazin} Let $A,B\in L(X)$. Then:

 \snoi {\rm (i)} If $A$ and $B$ are   left Drazin invertible  and  $P\in L(X)$ is a projector commuting with $A$ and $B$, then $AP+B(I-P)$ is  left Drazin invertible.

  \snoi {\rm (ii)} If $A$ and $B$ are   right Drazin invertible  and  $P\in L(X)$ is a projector commuting with $A$ and $B$, then $AP+B(I-P)$ is   right  Drazin invertible.

\end{lemma}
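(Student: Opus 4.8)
The plan is to reduce the claim to the decomposition criterion of Lemma \ref{left decom} by using the projector $P$ to split the space. Concretely, I would set $M=\R(P)$ and $N=\N(P)$, so that $X=M\oplus N$ with both summands closed. Since $P$ commutes with $A$ and with $B$, the subspaces $M$ and $N$ are invariant under $A$ and under $B$; hence $(M,N)\in Red(A)$ and $(M,N)\in Red(B)$ simultaneously.

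Next I would identify the restrictions of $C:=AP+B(I-P)$. For $x\in M=\R(P)$ one has $Px=x$ and $(I-P)x=0$, so $Cx=Ax$; for $x\in N=\N(P)$ one has $Px=0$ and $(I-P)x=x$, so $Cx=Bx$. Since $C$ also commutes with $P$, the pair $(M,N)$ reduces $C$ and we obtain $C=C_M\oplus C_N=A_M\oplus B_N$.

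The final step is a double application of Lemma \ref{left decom}. First, because $(M,N)\in Red(A)$ and $A$ is left Drazin invertible, that lemma gives that $A_M$ is left Drazin invertible; similarly, because $(M,N)\in Red(B)$ and $B$ is left Drazin invertible, $B_N$ is left Drazin invertible. Hence $C_M=A_M$ and $C_N=B_N$ are both left Drazin invertible, and applying Lemma \ref{left decom} once more to $(M,N)\in Red(C)$ yields that $C=AP+B(I-P)$ is left Drazin invertible. Part (ii) follows by the identical argument with ``right'' in place of ``left'', again invoking Lemma \ref{left decom}.

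The only point to verify carefully---though it is elementary---is that $P$ commuting with $A$ and $B$ really does make $M$ and $N$ invariant for all three operators $A$, $B$, and $C$, so that the reductions $C_M=A_M$ and $C_N=B_N$ genuinely hold as operators on the respective summands; everything else is a mechanical invocation of Lemma \ref{left decom}. I expect no real obstacle, since the hypotheses have been arranged precisely so that a single splitting simultaneously diagonalizes $A$, $B$, and $AP+B(I-P)$ along the pair $(M,N)$.
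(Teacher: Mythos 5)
Your proposal is correct and follows essentially the same route as the paper: split $X$ along $\R(P)\oplus\N(P)$, identify $AP+B(I-P)=A_{\R(P)}\oplus B_{\N(P)}$, and apply Lemma \ref{left decom} twice (once to extract that $A_{\R(P)}$ and $B_{\N(P)}$ are left Drazin invertible, once to reassemble). Your explicit verification that $Cx=Ax$ on $\R(P)$ and $Cx=Bx$ on $\N(P)$ is exactly the computation behind the paper's displayed identity, so there is no substantive difference.
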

\begin{proof} (i): Suppose that $A$ and $B$ are  left  Drazin invertible, and that $P\in L(X)$ is a projector commuting with $A$ and $B$. Since $(\R(P), \N(P))\in Red(A)$ and $(\R(P), \N(P))\in Red(B)$, according to Lemma \ref{left decom} it follows that $A_{\R(P)}$ and $B_{\N(P)}$ are  left  Drazin invertible. As  $(\N(P), \R(P))\in Red(AP+B(I-P))$, we have that
\begin{equation}\label{sd}
  AP+B(I-P)=(AP+B(I-P))_{\R(P)}\oplus (AP+B(I-P))_{\N(P)}=A_{\R(P)}\oplus B_{\N(P)}.
\end{equation}
  Again using  Lemma \ref{left Drazin}, from \eqref{sd}  we conclude that $AP+B(I-P)$ is  left Drazin invertible.


(ii) It is proved in an analogous  way as  (i).
\end{proof}

\begin{lemma} \label{almost+Fredholm} Let $A,B\in L(X)$. Then:

 \snoi {\rm (i)} If $A$ and $B$ are   left Browder and  $P\in L(X)$ is a projector commuting with $A$ and $B$, then $AP+B(I-P)$ is  left  Browder.

  \snoi {\rm (ii)} If $A$ and $B$ are   right Browder and  $P\in L(X)$ is a projector commuting with $A$ and $B$, then $AP+B(I-P)$ is   right  Browder.

\end{lemma}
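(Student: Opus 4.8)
The plan is to mirror the argument already used for Lemma \ref{left Drazin}, simply replacing left (resp., right) Drazin invertibility by left (resp., right) Browderness and invoking Lemma \ref{Riesz}(iv) in place of Lemma \ref{left decom}. The structural fact driving the proof is that, since $P$ commutes with both $A$ and $B$, the operator $AP+B(I-P)$ splits as a direct sum of a restriction of $A$ and a restriction of $B$ over the complementary pair $\R(P)$, $\N(P)$.

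For part (i) I would argue as follows. Assume $A$ and $B$ are left Browder and $P$ is a projector commuting with both. Commutation gives $(\R(P),\N(P))\in Red(A)$ and $(\R(P),\N(P))\in Red(B)$, so by the forward direction of Lemma \ref{Riesz}(iv) the restrictions $A_{\R(P)}$ and $B_{\N(P)}$ are each left Browder. Since $P$ acts as the identity on $\R(P)$ and as zero on $\N(P)$, one checks that $(\N(P),\R(P))\in Red(AP+B(I-P))$ with
\begin{equation*}
AP+B(I-P)=A_{\R(P)}\oplus B_{\N(P)}.
\end{equation*}
Applying the converse direction of Lemma \ref{Riesz}(iv) to this decomposition then yields that $AP+B(I-P)$ is left Browder. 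Part (ii) is identical, using the right-Browder clause of Lemma \ref{Riesz}(iv) at every step.

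I do not anticipate any genuine obstacle here: the entire content is packaged in the reduction-pair characterization of Lemma \ref{Riesz}(iv), and the one point needing verification is the invariance of $\R(P)$ and $\N(P)$ under $AP+B(I-P)$ together with the identification of the two diagonal blocks, both of which follow at once from $AP=PA$ and $BP=PB$. Thus the proof is the exact Browder analog of the displayed computation in Lemma \ref{left Drazin}.
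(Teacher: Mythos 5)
Your proposal is correct and is exactly the paper's proof: the author disposes of Lemma \ref{almost+Fredholm} in one line by saying it follows from Lemma \ref{Riesz} (iv) in the same way that Lemma \ref{left Drazin} follows from Lemma \ref{left decom}, which is precisely the substitution you carry out. Your explicit verification that $(\R(P),\N(P))$ reduces $A$, $B$, and $AP+B(I-P)$ with $AP+B(I-P)=A_{\R(P)}\oplus B_{\N(P)}$ merely spells out the displayed computation \eqref{sd} that the paper references implicitly.
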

\begin{proof} The proof  can be demonstrated by using Lemma \ref{Riesz} (iv) in a similar way as  the proof of Lemma \ref{left Drazin}.
 \end{proof}

\section{Left and right generalized  Drazin-Riesz invertible operators}\label{left Riesz section}

 The aim of this  section is to  introduce and explore the concept of generalized Saphar-Riesz decomposion and the concept of left and right generalized  Drazin-Riesz invertible operators.

\begin{definition} \rm Let $A\in L(X)$. If  there exists a pair $(M,N)
\in Red(A)$ such that $A_M$ is Saphar and $A_N$ is Riesz, then $A$  admits  a {\em generalized Saphar-Riesz
decomposition}, or shortly $A$ admits a GSRD$(M,N)$ (or   $A$ admits a GSRD).
\end{definition}

\begin{theorem}\label{prva S} Let $A\in L(X)$. The following conditions are equivalent:

\snoi {\rm (i)}  $A$ admits a GSRD;

\snoi {\rm (ii)} There exists a projector $P\in L(X)$   commuting with $A$
such that $A+P$ is essentially  Saphar   and $AP$ is Riesz;

\snoi {\rm (iii)} There exists a projector $P\in L(X)$   commuting with $A$
such that $A+P$ is of Saphar type  and $AP$ is Riesz.
\end{theorem}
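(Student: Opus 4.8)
The plan is to prove the cyclic chain of implications (i) $\Rightarrow$ (iii) $\Rightarrow$ (ii) $\Rightarrow$ (i), exploiting that the intermediate notions (essentially Saphar, Saphar type) sit between ``admits a GSRD'' and the projector formulation, and that a Saphar operator is a fortiori both of Saphar type and essentially Saphar (taking the nilpotent part to be the zero operator on $\{0\}$). Throughout, the natural projector to work with is $P$, the projection onto $N$ along $M$ associated to the decomposition $(M,N)\in Red(A)$.

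For (i) $\Rightarrow$ (iii), suppose $A$ admits a GSRD$(M,N)$, so $A_M$ is Saphar and $A_N$ is Riesz. Let $P$ be the projector with $\R(P)=N$ and $\N(P)=M$; since $(M,N)\in Red(A)$, $P$ commutes with $A$. I first analyze $A+P$ with respect to the pair $(M,N)$. On $M$ we have $P|_M=0$, so $(A+P)_M=A_M$, which is Saphar; on $N$ we have $P|_N=I_N$, so $(A+P)_N=A_N+I_N$. Because $A_N$ is Riesz, $A_N+I_N$ is invertible (indeed $-1\neq 0$ is not in the spectrum except possibly as an isolated eigenvalue of finite multiplicity, and I would argue it is Browder hence that $A_N+I_N$ is Browder, in particular of Saphar type). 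Then by Lemma \ref{Saphar type} the direct sum $(A+P)_M\oplus(A+P)_N$ is of Saphar type, i.e.\ $A+P$ is of Saphar type. For $AP$: with respect to $(M,N)$ we get $(AP)_M=A_M\cdot 0=0$ and $(AP)_N=A_N$, which is Riesz; since the zero operator on $M$ is Riesz, Lemma \ref{Riesz}(i) gives that $AP$ is Riesz.

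The implication (iii) $\Rightarrow$ (ii) should be the comparatively easy algebraic step: every operator of Saphar type is essentially Saphar. This follows by comparing the two characterizations quoted in the paper — an operator of Saphar type decomposes as $A_M'$ Saphar plus $A_{N'}$ nilpotent (of some finite degree) on a closed complemented pair, and the stated characterization of essentially Saphar via \cite[Theorem 2.1]{voki} and \cite[Lemma 2.1]{IEOT} requires exactly such a decomposition with $\dim N'<\infty$. The gap is that Saphar type allows a nilpotent part on an \emph{infinite}-dimensional summand, whereas essentially Saphar forces it finite-dimensional; so I expect the honest route is instead to prove (iii) $\Rightarrow$ (i) and (ii) $\Rightarrow$ (i) directly from the projector, and I flag this matching of definitions as \textbf{the main obstacle}. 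Concretely, given (iii) (or (ii)), set $M=\N(P)$ and $N=\R(P)$, so $(M,N)\in Red(A)$; compute $A_N=(AP)_N$, which is Riesz by Lemma \ref{Riesz}(i) since $AP$ is Riesz, and compute $A_M=(A+P)_M$, which must be Saphar. The point requiring care is extracting ``$A_M$ is Saphar'' from ``$A+P$ is essentially Saphar (resp.\ of Saphar type)'': one restricts the global decomposition to $M$, uses that $P|_M=0$ so the essentially-Saphar/Saphar-type structure of $A+P$ restricts to that of $A_M$, and then rules out a nontrivial nilpotent or finite-dimensional defect on $M$ by absorbing it into $N$ — invoking Lemma \ref{Riesz}(i) to see that a finite-rank nilpotent perturbation keeps the Riesz part Riesz, so that $A_M$ genuinely becomes Saphar. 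Establishing that this re-decomposition lands back inside $Red(A)$, and that the two summands satisfy the GSRD requirements, is where the bookkeeping with Lemmas \ref{zatvorenost}, \ref{glavna lema} and \ref{complemented in} on complementedness of the relevant ranges and kernels will be needed.
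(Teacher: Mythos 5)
There is a genuine structural gap: your repaired plan proves (i)$\Longrightarrow$(iii), (iii)$\Longrightarrow$(i) and (ii)$\Longrightarrow$(i), but no implication ever \emph{lands on} (ii), so the three-way equivalence is not established. You were right to flag that ``of Saphar type $\Longrightarrow$ essentially Saphar'' is false as a class inclusion (the zero operator on an infinite-dimensional space is nilpotent, hence of Saphar type, but not essentially Saphar, since the latter forces the nilpotent summand to be finite-dimensional); but the correct observation is that the inclusion goes the \emph{other} way: essentially Saphar does imply Saphar type, with the same pair, since a finite-dimensional nilpotent summand is in particular a nilpotent summand of finite degree. So the natural cycle is the one the paper uses, (i)$\Longrightarrow$(ii)$\Longrightarrow$(iii)$\Longrightarrow$(i), where (ii)$\Longrightarrow$(iii) is the trivial step --- your choice of orientation (i)$\Longrightarrow$(iii)$\Longrightarrow$(ii)$\Longrightarrow$(i) put the trivial step in the impossible direction, and your fallback never supplies a substitute route into (ii).

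The missing ingredient is precisely the paper's proof of (i)$\Longrightarrow$(ii), and your own (i)$\Longrightarrow$(iii) argument already contains its germ: with $P$ the projector onto $N$ along $M$, you noted that $(A+P)_N=A_N+I_N$ is Browder. Now apply the structure theorem for Browder operators (\cite[Theorem 2.32]{SANU}) to split $N=N_1\oplus N_2$ with $(A+P)_{N_1}$ invertible, $\dim N_2<\infty$ and $(A+P)_{N_2}$ nilpotent; then $M\oplus N_1$ is closed by Lemma \ref{zatvorenost}, it is $(A+P)$-invariant and complemented by $N_2$, and $(A+P)_{M\oplus N_1}=A_M\oplus (A+P)_{N_1}$ is Saphar by Lemma \ref{Riesz} (iii), so the pair $(M\oplus N_1,N_2)$ witnesses that $A+P$ is essentially Saphar via the characterization from \cite{voki} and \cite{IEOT}. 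Inserting this closes the cycle. One further inaccuracy to correct in your (iii)$\Longrightarrow$(i) sketch, which otherwise matches the paper's argument: the nilpotent defect $M_2$ extracted from the Saphar-type decomposition of $A_{\N(P)}$ need \emph{not} be finite-rank, so the phrase about a ``finite-rank nilpotent perturbation'' is off; what you actually need is only that nilpotent operators are Riesz (being quasinilpotent), whence $A_{M_2}\oplus A_{\R(P)}$ is Riesz by Lemma \ref{Riesz} (i) regardless of $\dim M_2$, and $(M_1, M_2\oplus \R(P))$ is the desired GSRD after checking closedness with Lemma \ref{zatvorenost}.
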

\begin{proof} (i)$\Longrightarrow$(ii): Let $A$ admit a GSRD.  Then there exists $(M,N)\in Red(A)$ such that $ A_M$ is Saphar and $ A_N$ is Riesz. Let $P\in L(X)$ be a projector such that $\R(P)=N$ and $\N(P)=M$. Then  $P$ commutes with $A$, and hence  $(M,N)\in Red (AP)$, $(M,N)\in Red (A+P)$. Thus
$ AP=(AP)_M\oplus (AP)_N=0\oplus A_N$, and since
 $A_N$ is Riesz,  according to Lemma \ref{Riesz} (i) we obtain that $AP$ is Riesz, while from \cite[Theorem 3.2]{SANU}  we conclude that
  $(A+P)_N=A_N+I_N$ is Browder. From \cite[Theorem 2.32]{SANU} it follows that there exists closed subspaces $N_1$ and $N_2$ of $N$ such that $(N_1,N_2)\in Red((A+P)_N)$, $\dim N_2<\infty$, $(A+P)_{N_1}$ is invertible and $(A+P)_{N_2}$ is nilpotent. According to  Lemma
  \ref{zatvorenost} we have that the subspace $M\oplus N_1$ is   closed.
   As $M$ and $N_1$ are  $A+P$-invariant, we have that  $M\oplus N_1$ is   $A+P$-invariant. Since $M\oplus N_1$  is complemented in $X$  with $N_2$, we have that $(M\oplus N_1,N_2)\in Red(A+P)$. As $(A+P)_M=A_M$ and $(A+P)_{N_1}$    are Saphar, Lemma
  \ref{Riesz} (iii) ensures that $(A+P)_{M\oplus N_1}$ is Saphar. Consequently,  $A+P$ is essentially  Saphar.

  (ii)$\Longrightarrow$(iii): It is clear.

  (iii)$\Longrightarrow$(i): Let there exist a projector $P\in L(X)$   commuting with $A$
such that $A+P$ is of Saphar type  and $AP$ is Riesz. Then $(\R(P), \N(P))\in Red(AP)$, $(\R(P), \N(P))\in Red(A+P)$. From  Lemma \ref{Riesz} (i) it follows that $A_{\R(P)}=(AP)_{\R(P)}$ is Riesz, while from Lemma \ref{Saphar type}  it follows that $A_{\N(P)}=(A+P)_{\N(P)}$ is of Saphar type. Hence there exist closed subspaces $N_1,N_2$ of $\N(P)$ such that $(N_1,N_2)\in Red(A_{\N(P)})$, $A_{N_1}$ is Saphar and $A_{N_2}$ is nilpotent. According to Lemma
  \ref{zatvorenost} the subspace $N_2\oplus \R(P)$ is closed. Since $(N_1,N_2\oplus \R(P))\in Red(A)$ and $A_{N_2\oplus \R(P)}=A_{N_2}\oplus A_{\R(P)}$ is Riesz according to Lemma \ref{Riesz} (i), we obtain that $A$
  \  admits a GSRD$(N_1, N_2\oplus \R(P))$.
\end{proof}

\begin{definition}\label{left-R-Drazin} \rm An operator
$A\in L(X)$ is  {\it left  generalized Drazin-Riesz invertible} if there is $B\in L(X)$ such that
\begin{equation}\label{prvi uslovi}
ABA=BA^2,\ B^2A=B, \ A-ABA \ {\rm is\ Riesz}.
\end{equation}

An operator
$A\in L(X)$ is  {\it right  generalized Drazin-Riesz invertible} if there is $B\in L(X)$ such that
\begin{equation}\label{prvi uslovi r}
ABA=A^2B,\ AB^2=B, \ A-ABA \ {\rm is\ Riesz}.
\end{equation}
\end{definition}


In \cite[Definition 7.5.2]{H3} Harte introduced the notation  of a quasipolar element in a Banach algebra:  an element $a$ of a Banach algebra $\A$ is {\it quasipolar} if there is an idempotent $q\in \A$  such that $aq=qa$,  $a(1-q)$ is quasinilpotent and $q\in (\A a)\cap(a\A)$.
 The notation  of  Risz-quasipolar operators was introduced in \cite{ZC}: an operator $A\in L(X)$  is {\it Riesz-quasipolar} if there exists a bounded projection $Q$ commuting with $A$ and satisfying
$A(I-Q)\ {\rm is\ Riesz},\ Q\in(L(X)A)\cap (AL(X))$.

\begin{definition} \rm An operator  $A\in L(X)$ is {\it left Risz-quasipolar} if there exists a projector   $Q\in L(X)$ satisfying
\begin{equation}\label{lR-qp}
 AQ=QA,\ A(I-Q)\ {\rm is\ Riesz},\ Q\in L(X)A.
\end{equation}
An operator $A\in L(X)$ is {\it right Riesz-quasipolar} if there exists a projector $Q\in L(X)$ satisfying
\begin{equation}\label{rR-qp}
 AQ=QA,\ A(I-Q)\ {\rm is\ Riesz },\ Q\in AL(X).
\end{equation}
\end{definition}

In the following theorem we give some characterizations of left generalized  Drazin-Riesz invertible operators.

\begin{theorem} \label{prva Riesz} For $A\in L(X)$ the following conditions are equivalent:

\snoi {\rm (i)}
 $A$ is left generalized Drazin-Riesz invertible;

\snoi {\rm (ii)}  There exists $C\in L(X)$ such that
\begin{equation*}
    ACA=CA^2,\ C^2A=C=CAC, \ A-ACA \ {\rm is\ Riesz};
\end{equation*}

\snoi {\rm (iii)} There exists $(M,N)\in Red(A)$ such that $ A_M$ is left invertible   and $ A_N$ is Riesz;

\snoi {\rm (iv)}
There exists a projector $P\in L(X)$   commuting with $A$
such that $A+P$ is left Browder and $AP$ is Riesz;

\snoi {\rm (v)} There exists a projector $P\in L(X)$   commuting with $A$
such that $A(I-P)+P$ is left Browder and $AP$ is Riesz;

\snoi {\rm (vi)}  There exist $C,D,P\in L(X)$ such that $C$ is  left Browder, $D$ is Riesz, $P$ is a projector  commuting with $C$ and $D$,  and  $A=C(I-P)+DP$;

\snoi {\rm (vii)}
$A$ is left Riesz-quasipolar;

\snoi {\rm (viii)} $A$ admits a GSRD  and
$0\notin {\rm int}\, \sigma_{l}(A)$;

\snoi {\rm (ix)} $A$ admits a GSRD and $A$ has the SVEP at $0$;

\snoi {\rm (x)} $A$ admits a GSRD  and
$0\notin {\rm acc}\, \sigma_{\B}^l(A)$;

\snoi {\rm (xi)} $A$ admits a GSRD  and
$0\notin {\rm int}\, \sigma_{\B}^l(A)$;

\snoi {\rm (xii)} $A$ admits a GSRD  and
$0\notin {\rm acc}\, \sigma_{D}^+(A)$;

\snoi {\rm (xiii)} $A$ admits a GSRD  and
$0\notin {\rm int}\, \sigma_{D}^+(A)$;


\snoi {\rm (xiv)} $A$ admits a GSRD  and
$0\notin {\rm int}\, \sigma_{p}(A)$.


\end{theorem}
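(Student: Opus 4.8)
The plan is to make condition (iii)---the existence of $(M,N)\in Red(A)$ with $A_M$ left invertible and $A_N$ Riesz---the central hub: I would first show that the algebraic and decomposition statements (i), (ii), (iv), (v), (vi), (vii) are each equivalent to (iii), and then attach the seven ``GSRD $+$ spectral condition at $0$'' statements (viii)--(xiv) to the hub, splitting each into an easy forward implication (iii)$\Rightarrow(\cdot)$ and a harder converse.

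For the structural core the crucial move is (ii)$\Rightarrow$(iii). Given $C$ with $ACA=CA^2$, $C^2A=C=CAC$, one checks that $P:=CA$ is idempotent and commutes with $A$, since $(CA)^2=C(ACA)=C(CA^2)=(C^2A)A=CA$ and $A(CA)=ACA=CA^2=(CA)A$. Setting $M=\R(P)$, $N=\N(P)$ gives $(M,N)\in Red(A)$; because $CAx=Px=x$ on $M$, the map $x\mapsto PCx$ restricts to a left inverse of $A_M$ on $M$, while $A-ACA=A(I-P)=0_M\oplus A_N$ is Riesz, so $A_N$ is Riesz by Lemma \ref{Riesz}(i). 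The reverse (iii)$\Rightarrow$(i) follows by taking $B=B_M\oplus 0$ for any left inverse $B_M$ of $A_M$ and verifying \eqref{prvi uslovi} directly; (ii)$\Rightarrow$(i) is immediate and (i)$\Rightarrow$(ii) is the standard construction of an inner inverse inside the algebra generated by $A$ and $B$. The equivalences with (iv), (v), (vi), (vii) I would obtain by projecting onto the two summands: taking $P$ the projection onto $N$ along $M$ gives $AP=0_M\oplus A_N$ Riesz and $A+P=A_M\oplus(A_N+I_N)$, where $A_M$ left invertible is Saphar hence left Browder, and $A_N+I_N$ is Browder since $A_N$ is Riesz, so $A+P$ is left Browder by Lemma \ref{Riesz}(iv). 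The converses use that a left Browder operator splits off a finite-dimensional nilpotent summand which is absorbed into the Riesz part via Lemma \ref{Riesz}(i), together with Lemma \ref{almost+Fredholm}; for (vii) one takes $Q$ the projection onto $M$ along $N$ and realises $Q=TA$ from the left inverse of $A_M$.

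For the forward implications (iii)$\Rightarrow$(viii)--(xiv) I would compute the relevant spectra of $A=A_M\oplus A_N$. Left invertibility of $A_M$ is stable under small perturbations, so $\sigma_l(A_M)$, $\sigma_{\B}^l(A_M)$ and $\sigma_D^+(A_M)$ all avoid a neighbourhood of $0$; the Riesz operator $A_N$ has countable spectrum clustering only at $0$, whence $\sigma_l(A_N)\subseteq\sigma(A_N)$ has empty interior (giving (viii), (xi), (xiii), (xiv)) and $A_N-\lambda$ is Browder for $\lambda\neq0$, so $\sigma_{\B}^l(A_N),\sigma_D^+(A_N)\subseteq\{0\}$ (giving (x), (xii)). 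For (ix) one notes that a left invertible operator is injective, hence has SVEP at $0$, and that a Riesz operator has SVEP at $0$ because its eigenvalues near $0$ are isolated, so any local analytic solution of $(A_N-\lambda)f(\lambda)=0$ vanishes on a set clustering at $0$ and hence identically; SVEP of the summands yields SVEP of $A$. Throughout, $A_M$ Saphar and $A_N$ Riesz already furnish the GSRD.

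The main obstacle is the converse direction, each of (viii)--(xiv)$\Rightarrow$(iii). Here I would fix a GSRD $A=A_M\oplus A_N$ with $A_M$ Saphar (so semi-regular) and $A_N$ Riesz, and show that the spectral hypothesis forces $A_M$ to be injective; being Saphar, $A_M$ then has closed complemented range, hence is left invertible, which is exactly (iii). The key technical input is the punctured-neighbourhood behaviour of a semi-regular operator: if $A_M$ is \emph{not} injective then, using $\N(A_M)\subseteq\bigcap_n\R(A_M^n)$ and positivity of the reduced minimum modulus, one constructs a nonzero analytic eigenvector family $(A_M-\lambda)f(\lambda)=0$ on a disc about $0$, so that a whole punctured disc lies in $\sigma_p(A_M)$, each $A_M-\lambda$ there has infinite ascent (placing the disc in $\sigma_{\B}^l(A_M)$ and $\sigma_D^+(A_M)$ too), and SVEP at $0$ is destroyed. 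By monotonicity of the point, left-Browder and upper-Drazin spectra under the passage $A_M\hookrightarrow A$, and the inheritance of SVEP by restrictions, this contradicts every one of (viii)--(xiv); the implications among the latter I would organise through the inclusions $\sigma_D^+(A)\subseteq\sigma_{\B}^l(A)\subseteq\sigma_l(A)$. The delicate points are justifying the semi-regular eigenvector construction (the genuine analytic heart of the argument) and checking that the Riesz summand $A_N$ never interferes with pushing each hypothesis from $A$ down to $A_M$.
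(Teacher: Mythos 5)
Your proposal is correct, and its structural core coincides with the paper's: for (i)--(vii) you use exactly the same mechanics (the idempotent $P=CA$ commuting with $A$ for (ii)$\Rightarrow$(iii), $B=B_M\oplus 0_N$ for (iii)$\Rightarrow$(i), $Q=BA$ and $B=QTQ$ for the quasipolar equivalence, the splitting of a left Browder operator into a left invertible part plus a finite-dimensional nilpotent absorbed into the Riesz summand, and Lemma \ref{almost+Fredholm} for (iv)$\Rightarrow$(v)), and the forward implications (iii)$\Rightarrow$(viii)--(xiv) are the same spectral computations. Where you genuinely diverge is in the converses (viii)--(xiv)$\Rightarrow$(iii): the paper funnels these through a few cited black boxes --- (viii)$\Rightarrow$(ix) by the identity theorem, (ix)$\Rightarrow$(iii) by \cite[Theorem 2.49]{Ai}, and (xiii),(xiv)$\Rightarrow$(iii) by Grabiner's punctured neighbourhood theorem as used in the proof of \cite[Theorem 3.19]{MZ}, with the remaining cases handled by the trivial implications among the spectral conditions --- whereas you give one uniform, self-contained argument: if $A_M$ is Kato but not injective, the classical construction $f(\lambda)=\sum_n\lambda^n x_n$ with $0\neq x_0\in\N(A_M)$, $A_Mx_{n+1}=x_n$ and norms controlled by the reduced minimum modulus yields a nonzero analytic eigenvector family, hence a full disc in $\sigma_p(A_M)$ on which each $A_M-\lambda$ is Kato and non-injective, therefore of infinite ascent (semi-regular plus finite ascent forces injectivity), so the disc lies in $\sigma_{l}$, $\sigma_{\B}^l$, $\sigma_{D}^+$ and $\sigma_p$ of $A_M$ and SVEP at $0$ fails; direct-sum monotonicity then contradicts every one of (viii)--(xiv) at the level of $A$. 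This buys a proof that is independent of the external references (you are in effect re-proving the content of Aiena's Theorem 2.49 and the Grabiner-based step), at the cost of redoing a known construction that the paper simply cites; the hub organisation around (iii) is also cleaner than the paper's implication cycles. Two cosmetic slips you should fix: ``$A_M$ left invertible is Saphar hence left Browder'' --- Saphar does not imply left Browder in general (a surjective Saphar operator with infinite-dimensional kernel is a counterexample); left invertibility gives left Browder directly ($\alpha=0$, $a(A_M)=0$, $\R(A_M)$ complemented); and ``left invertible is injective, hence has SVEP at $0$'' --- injectivity at the single point $0$ does not yield SVEP at $0$; what you need, and what left invertibility does provide, is that $A_M-\lambda$ stays injective on a whole neighbourhood of $0$, so $\sigma_p(A_M)$ misses a disc around $0$. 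Neither slip affects the validity of the overall argument.
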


\begin{proof} (i)$\Longrightarrow$(ii): Let $A$ be left  generalized Drazin-Riesz invertible. Then there exists $B\in L(X)$ such that the conditions \eqref{prvi uslovi} hold.
Set $C=BAB$. Then 
\begin{eqnarray*}
  &&ACA=ABABA=BA^2BA=BABA^2=CA^2,\\ && C^2A=(BAB)^2A=BA(BBA)BA=BABBA=BAB=C,  \\
  && CAC=BABABAB=BABBA^2B=BABAB=B^2A^2B=BAB=C, \\
 && A-ACA=A-ABABA=A-AB^2A^2=A-ABA\ {\rm is\ Riesz}.
\end{eqnarray*}
(ii)$\Longrightarrow$(i): It is obvious.

\smallskip

(ii)$\Longrightarrow$(iii): Suppose that there exists $C\in L(X)$ and $R\in L(X)$ which is Riesz such that  $ ACA=CA^2,\ C^2A=C=CAC, \ A-ACA=R$.
Then $(CA)^2=CACA=CA$, $A(CA)=CA^2=(CA)A$, $C(CA)=C=(CA)C$, and so $CA$ is a bounded projector which commutes with $A$,  $C$ and  $R$. It implies that  $(\R(CA), \N(CA))\in Red(A)$, $(\R(CA), \N(CA))\in Red(C)$ and  $(\R(CA), \N(CA))\in Red(R)$. According to Lemma \ref{Riesz} (i) we conclude that $R_{\N(CA)}$ is Riesz. 
Further, $A_{\N(CA)}=(A(I-CA))_{\N(CA)}=R_{\N(CA)}$ and so $A_{\N(CA)}$ is Riesz. As $C_{\R(CA)}A_{\R(CA)}= (CA)_{\R(CA)}=I_{\R(CA)}$, it follows that $A_{\R(CA)}$ is left invertible.

\smallskip

(iii)$\Longrightarrow$(i): Suppose that there exists $(M,N)\in Red(A)$ such that $ A_M$ is left invertible   and $ A_N$ is Riesz. Let $B_M$ is a left inverse of $A_M$, i.e. $B_MA_M=I_M$. Set $B=B_M\oplus 0_N$. Then
$$
B^2A=(B_M\oplus 0_N)^2(A_M\oplus A_N)=B_M^2A_M\oplus 0_N=B_M\oplus 0_N=B,
$$
$$ABA=(A_M\oplus A_N)(B_M\oplus 0_N)(A_M\oplus A_N)=A_MB_MA_M\oplus 0_N=A_M\oplus 0_N,$$
$$
BA^2=(B_M\oplus 0_N)(A_M\oplus A_N)^2=B_MA_M^2\oplus 0_N=A_M\oplus 0_N,
$$ and hence $ABA=B^2A$ and $A-ABA=0_M\oplus A_N$ is Riesz according to Lemma \ref{Riesz} (i). Therefore, $A$ is left generalized Drazin-Riesz invertible.

\smallskip

(iii)$\Longrightarrow$(iv): Suppose that  there exists $(M,N)\in Red(A)$ such that $ A_M$ is left invertible   and $ A_N$ is Riesz.  Let $P$ be the projector  of $X$ onto $N$ along $M$. Then  $P$ commutes with $A$, and hence $P$ commutes with $AP$ and $A+P$. Thus $(M,N)\in Red (AP)$ and $(M,N)\in Red (A+P)$. From
$$
AP=(AP)_M\oplus (AP)_N=0\oplus A_N
$$ and Lemma \ref{Riesz} (i) it follows that $AP$ is Riesz. As $A_N$ is Riesz, we have that $A_N+I_N$ is Browder \cite[Theorem 3.2]{SANU}, and hence it is left Browder. From
  $$
A+P=(A+P)_M\oplus (A+P)_N=A_M\oplus (A_N+I_N),
$$ according to Lemma \ref{Riesz} (iv) we conclude that $A+P$ is left Browder.

\smallskip

(iv)$\Longrightarrow$(iii): Suppose that there  exists a projector $P\in L(X)$   commuting with $A$
such that $A+P$ is left Browder and $AP$ is Riesz. Let $M=\N(P)$ and $N=\R(P)$. Then $(M,N)\in Red(A)$, $(M,N)\in Red(AP)$ and $(M,N)\in Red(A+P)$.  From Lemma \ref{Riesz} (i) we obtain  that  $A_N=(AP)_N$ is Riesz, while from Lemma \ref{Riesz} (iv) we have  that $A_M=(A+P)_M$ is left Browder.
From \cite[Theorem 5]{ZDH1} it follows that there exist closed subspace $M_1,M_2$ of $M$ such that $\dim M_2<\infty$,  $(M_1,M_2)\in Red(A_M)$, $A_M=A_{M_1}\oplus A_{M_2}$, $A_{M_1}$ is left invertible and $A_{M_2}$ is nilpotent. Then $N_1=M_2\oplus N$ is a closed subspace of $X$, $(M_1,N_1)\in Red(A)$,  $A_{N_1}=A_{M_2}\oplus A_{N}$ is Riesz according to Lemma \ref{Riesz} (i).

\smallskip

(iv)$\Longrightarrow$(v): Let there exist a projector $P\in L(X)$   commuting with $A$
such that $A+P$ is left Browder and $AP$ is Riesz. Then
 $P$ commutes with $A+P$ and $I$, and since $A+P$ and $I$ are  left Browder, from  Lemma  \ref{almost+Fredholm} (i) it follows that $A(I-P)+P=(A+P)(I-P)+ I\cdot P$ is left Browder.

\smallskip

(v)$\Longrightarrow$(vi): Let $P$ be a projector  which commutes with $A$ and
such that  $A(I-P)+P$ is left Browder and $AP$ is Riesz. Set $C=A(I-P)+P$ and $D=AP$. Then $C$ is left Browder, $D$ is  Riesz, $P$ commutes with $C$ and $D$,   and
$$
C(I-P)+DP=(A(I-P)+P)(I-P)+AP=A.$$

(vi)$\Longrightarrow$(iii):
 Suppose that there exist $C,D,P\in L(X)$ such that $C$ is  left Browder, $D$ is Riesz, $P$ is a projector  commuting with $C$ and $D$,  and  $A=C(I-P)+DP$.
Then $P$ is commuting with $C(I-P)$ and $DP$, and hence $(\R(P),\N(P))\in Red(C(I-P))$ and  $(\R(P),\N(P))\in Red(DP)$. Consequently,
\begin{eqnarray*}
  A &=& C(I-P)+DP=((C(I-P))_{\R(P)}\oplus (C(I-P))_{\N(P)})+((DP)_{\R(P)}\oplus (DP)_{\N(P)})\\&=&(0_{\R(P)}\oplus C_{\N(P)})+(D_{\R(P)}\oplus 0_{\N(P)}) =D_{\R(P)}\oplus C_{\N(P)}.
\end{eqnarray*}
 Lemma \ref{Riesz} (i) ensures that $D_{\R(P)}$ is Riesz, while Lemma \ref{Riesz} (iv) ensures that $C_{\N(P)}$ is left Browder. Now as in the proof of the implication (iv)$\Longrightarrow$(iii) we conclude that $A$ is a direct sum of a Riesz operator and a left invertible operator.

\smallskip

 (i)$\Longrightarrow$(vii): Suppose that  $A$ is left generalized Drazin-Riesz invertible. Then there exists
$B\in L(X)$ such that
$
ABA=BA^2,\ B^2A=B, \ A-ABA \ {\rm is\ Riesz}.
$
Set $Q=BA$. Then
$$
Q^2=BABA=B^2A^2=BA=Q,\quad AQ=ABA=BA^2=QA,
$$
 $A(I-Q)=A-ABA$ is Riesz and $Q\in L(X)A$. Therefore, $A$ is left Riesz-quasipolar.

 \smallskip

 (vii)$\Longrightarrow$(i): Let $A$ be left Riesz-quasipolar. Thus there exists  a projector  $Q\in L(X)$ such that $AQ=QA,\ A(I-Q)\ {\rm is\ Riesz},\ Q\in L(X)A$.
Then there exists $T\in L(X)$ such that $Q=TA$. Set $B=QTQ$. Then $ABA=BA^2$, $B^2A=B$ and $A-ABA=A-AQ=A(I-Q)\ {\rm is\ Riesz}$,
and so $A$ is left generalized Drazin-Riesz invertible.

\smallskip

(iii)$\Longrightarrow$(viii): Suppose that there exists $(M,N)\in Red(A)$ such that $ A_M$ is left invertible   and $ A_N$ is Riesz. Then $A_M$ is Saphar, and so $A$ admits a GSRD$(M,N)$.
 Further there exists an $\epsilon>0$ such that $D(0,\epsilon)\cap \sigma_{l}(A_M)=\emptyset$. Since   $A_N$ is Riesz, its left spectrum $\sigma_{l}(A_N)$ is at most countable with $0$ as its only possible limit point.  As  $ \sigma_{l}(A)= \sigma_{l}(A_M)\cup \sigma_{l}(A_N)$ according Lemma \ref{suma left}, we conclude that $0\notin{\rm int}\, \sigma_{l}(A)$.

\smallskip

(viii)$\Longrightarrow$(ix): Let $0\notin {\rm int}\, \sigma_{l} (A)$. This implies that $0\notin\sigma_{l} (A)$ or $0\in\partial\sigma_{l} (A)$. In both cases  $A$ has SVEP at 0
by
the identity theorem for analytic functions.

\smallskip

(ix)$\Longrightarrow$(iii):
Suppose that $A$ admits a GSRD and $A$ has the SVEP at $0$. Then there exists $(M,N)\in Red(A)$ such that $ A_M$ is Saphar and $ A_N$ is Riesz. Further we conclude that  $A_M$ has the SVEP at $0$ and according  \cite[Theorem 2.49]{Ai} it follows  that $A_M$ is left invertible.

\smallskip

 (iii)$\Longrightarrow$(x):  Suppose that
  there exists $(M,N)\in Red(A)$ such that $ A_M$ is left invertible  and $ A_N$ is Riesz. Then   $A$ admits a
GSRD$(M,N)$. As $A_M$ is left invertible,  there exists $\epsilon>0$ such that for every $\lambda\in \CC$ satisfying $|\lambda|<\epsilon$ we have  $A_M-\lambda I_M$ is left invertible. Since  $A_N$ is Riesz, it follows that  $A_N-\lambda I_N$ is   left Browder   for every $\lambda\in \CC$ such that $0<|\lambda|<\epsilon$  \cite[Theorem 3.2]{SANU}.   Lemma  \ref{Riesz} (iv) ensures that
 $A-\lambda I$ is left Browder  for every $\lambda\in\CC$ such that $0<|\lambda|<\epsilon$, and so $0\notin \acc\, \sigma_{\B}^l(A)$.

 \smallskip

The implications (x)$\Longrightarrow$(xi)$\Longrightarrow$(xiii), (x)$\Longrightarrow$(xii)$\Longrightarrow$(xiii), (viii)$\Longrightarrow$(xiv) are obvious.

\smallskip
(xiii)$\Longrightarrow$(iii), (xiv)$\Longrightarrow$(iii):  Suppose that $A$ admits GSRD and $0 \notin \inter\, \sigma_{D}^+(A)$ (resp., $0 \notin \inter\, \sigma_{ p}(A)$). Then  there exists a decomposition $(M,N)\in Red(A)$ such that $ A_M$ is Saphar  and $A_N$ is Riesz. By using Grabiner's punctured neighborhood theorem \cite[Theorem 4.7]{Grabiner}, from the proof of 
of the implication (xii)$\Longrightarrow$(i) (resp., (xiv)$\Longrightarrow$(i)) in \cite[Theorem 3.19]{MZ} we conclude that $A_M$ is left invertible.
\end{proof}

Characterizations of right generalized Drazin-Riesz invertible operators are obtained in a similar way as those of left  generalized Drazin-Riesz invertible operators in Theorem \ref{prva Riesz}.

\begin{theorem} \label{druga Riesz} For $A\in L(X)$ the following conditions are equivalent:

\snoi {\rm (i)}
 $A$ is right generalized Drazin-Riesz invertible;

\snoi {\rm (ii)}  There exists $C\in L(X)$ such that
\begin{equation*}
    ACA=A^2C,\ AC^2=C=CAC, \ A-ACA \ {\rm is\ Riesz};
\end{equation*}

\snoi {\rm (iii)} There exists $(M,N)\in Red(A)$ such that $ A_M$ is right invertible   and $ A_N$ is Riesz;

\snoi {\rm (iv)}
There exists a projector $P\in L(X)$   commuting with $A$
such that $A+P$ is right Browder and $AP$ is Riesz;

\snoi {\rm (v)} There exists a projector $P\in L(X)$   commuting with $A$
such that $A(I-P)+P$ is right Browder and $AP$ is Riesz;

\snoi {\rm (vi)}  There exist $C,D,P\in L(X)$ such that $C$ is  right Browder, $D$ is Riesz, $P$ is a projector  commuting with $C$ and $D$,  and  $A=C(I-P)+DP$;

\snoi {\rm (vii)}
$A$ is right Riesz-quasipolar;

\snoi {\rm (viii)} $A$ admits a GSRD  and
$0\notin {\rm int}\, \sigma_{r}(A)$;

\snoi {\rm (ix)} $A$ admits a GSRD and $A^\prime$ has the SVEP at $0$;

\snoi {\rm (x)} $A$ admits a GSRD  and
$0\notin {\rm acc}\, \sigma_{\B}^r(A)$;

\snoi {\rm (xi)} $A$ admits a GSRD  and
$0\notin {\rm int}\, \sigma_{\B}^r(A)$;

\snoi {\rm (xii)} $A$ admits a GSRD  and
$0\notin {\rm acc}\, \sigma_{dsc}(A)$;

\snoi {\rm (xiii)} $A$ admits a GSRD  and
$0\notin {\rm int}\, \sigma_{dsc}(A)$;


\snoi {\rm (xiv)} $A$ admits a GSRD  and
$0\notin {\rm int}\, \sigma_{cp}(A)$.


\end{theorem}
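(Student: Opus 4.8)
The plan is to transcribe the proof of Theorem \ref{prva Riesz} line by line under a fixed left-to-right dictionary: left invertible $\to$ right invertible, left Browder $\to$ right Browder, $\sigma_l \to \sigma_r$, $\sigma_{\B}^l \to \sigma_{\B}^r$, $\sigma_{D}^+ \to \sigma_{dsc}$, $\sigma_p \to \sigma_{cp}$, and ``$A$ has SVEP at $0$'' $\to$ ``$A'$ has SVEP at $0$''. Every structural lemma invoked in Theorem \ref{prva Riesz} already carries both one-sided versions: Lemma \ref{Riesz} parts (i), (iii), (iv), Lemma \ref{suma left}, and Lemma \ref{almost+Fredholm} are symmetric in left/right, so they can be reused without change, as can the splitting of a Riesz summand via \cite[Theorem 3.2]{SANU} and the finite-dimensional reduction of a right Browder summand.

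For the algebraic equivalences (i)--(vii) I would recompute with the right-handed relations $ABA=A^2B$, $AB^2=B$. In (i)$\Rightarrow$(ii), $C=BAB$ still works. The one genuine change is in (ii)$\Rightarrow$(iii): the relevant idempotent is now $P:=AC$ rather than $CA$. From $ACA=A^2C$, $AC^2=C=CAC$ one gets $P^2=ACAC=A^2C^2=A(AC^2)=AC=P$, and $AP=A^2C=ACA=PA$, while $CA\!\cdot\! C=C=C\!\cdot\! AC$ shows $AC$ commutes with $C$; hence $AC$ commutes with $A$, $C$ and with $R:=A-ACA$, so $(\R(AC),\N(AC))\in Red(A)$ splits all three. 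On $\R(AC)$ the relation $A_{\R(AC)}C_{\R(AC)}=(AC)_{\R(AC)}=I_{\R(AC)}$ exhibits $A_{\R(AC)}$ as surjective, hence right invertible, and $A_{\N(AC)}=(A-A^2C)_{\N(AC)}=R_{\N(AC)}$ is Riesz by Lemma \ref{Riesz} (i). In (iii)$\Rightarrow$(i) I take $B=B_M\oplus 0_N$ with $B_M$ a right inverse of $A_M$, and in (i)$\Rightarrow$(vii) I set $Q=AB$, so that $Q\in AL(X)$, matching right Riesz-quasipolarity; in (vii)$\Rightarrow$(i) I write $Q=AT$ and set $B=QTQ$.

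For the spectral equivalences (viii)--(xiv), the transposition rests on the right-handed inclusions $\sigma_{cp}\subseteq\sigma_r$ and $\sigma_{dsc}\subseteq\sigma_{\B}^r$ (dual to $\sigma_p\subseteq\sigma_l$ and $\sigma_{D}^+\subseteq\sigma_{\B}^l$) together with the elementary fact $\inter K\subseteq\acc K$, which render (x)$\Rightarrow$(xi)$\Rightarrow$(xiii), (x)$\Rightarrow$(xii)$\Rightarrow$(xiii) and (viii)$\Rightarrow$(xiv) immediate. Since the \emph{right} spectrum of a Riesz operator is again at most countable with $0$ as its only possible accumulation point, the implications (iii)$\Rightarrow$(viii) and (iii)$\Rightarrow$(x) go through verbatim using $\sigma_r(A)=\sigma_r(A_M)\cup\sigma_r(A_N)$ (Lemma \ref{suma left}).

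The step I expect to need real care, rather than a mechanical substitution, is (ix). In the left case one passes from SVEP of $A_M$ at $0$ to left invertibility of the Saphar operator $A_M$ via \cite[Theorem 2.49]{Ai}; here one must instead use the adjoint form of that result, namely that for the semi-regular (Saphar) operator $A_M$, SVEP of $A_M'$ at $0$ forces $A_M$ to be surjective, hence right invertible. I would extract this from the dual decomposition $A'=A_M'\oplus A_N'$ (so SVEP of $A'$ at $0$ descends to $A_M'$) and the adjoint version of Aiena's theorem relating SVEP of the adjoint to surjectivity. Similarly, in (xiii)$\Rightarrow$(iii) and (xiv)$\Rightarrow$(iii) I would apply Grabiner's punctured neighbourhood theorem \cite[Theorem 4.7]{Grabiner} together with the \emph{dual} branches of \cite[Theorem 3.19]{MZ} phrased through the descent spectrum and the compression spectrum to conclude that the Saphar summand $A_M$ is surjective. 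These adjoint passages, and the bookkeeping of which one-sided spectrum is dual to which, are the only places where the argument departs from a direct transcription of Theorem \ref{prva Riesz}.
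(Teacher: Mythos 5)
Your proposal is correct and takes essentially the same approach as the paper, which gives no separate proof of Theorem \ref{druga Riesz} but states only that it is obtained in a similar way as Theorem \ref{prva Riesz}. Your transcription supplies exactly the modifications that route requires: the idempotent $P=AC$ in (ii)$\Longrightarrow$(iii), $Q=AB$ for right Riesz-quasipolarity, the adjoint (surjectivity) form of \cite[Theorem 2.49]{Ai} for (ix), the dual branches of \cite[Theorems 3.19 and 3.20]{MZ} with Grabiner's theorem for (xiii), (xiv), and the inclusions $\sigma_{cp}(A)\subset\sigma_{r}(A)$ and $\sigma_{dsc}(A)\subset\sigma_{\B}^r(A)$ for the trivial implications.
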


\begin{remark}\label{ne mogu} \rm
From   Theorem  \ref{prva Riesz}, Theorem  \ref{druga Riesz} and \cite[Theorems 3.3 and 3.4]{Thome},
   \cite[Theorems 3.19 and 3.20]{MZ}
  it follows that every left (right) generalized   Drazin invertible operator  is left generalized Drazin-Riesz invertible.

The condition that $0\notin{\rm int}\, \sigma_{l }(A)$    ($0\notin{\rm int}\, \sigma_{r}(A)$) in the statement (viii) in Theorem \ref{prva Riesz} (Theorem \ref{druga Riesz})  can not be replaced with the stronger condition that $0\notin{\rm acc}\, \sigma_{l}(A)$ ($0\notin{\rm acc}\, \sigma_{r}(A)$).
Also, the condition that $0\notin{\rm int}\, \sigma_{p }(A)$    ($0\notin{\rm int}\, \sigma_{cp}(A)$) in the statement (xiv) in Theorem \ref{prva Riesz} (Theorem \ref{druga Riesz}), can not be replaced with the stronger condition that $0\notin{\rm acc}\, \sigma_{p}(A)$ ($0\notin{\rm acc}\, \sigma_{cp}(A)$).
Namely,  if $A\in L(X)$ is a Riesz operator with infinite spectrum, then  $A$ is left and right generalized Drazin-Riesz  invertible, but $\sigma(A)=\sigma_{l}(A)=\sigma_{r}(A)$, $\sigma(A)\setminus\{0\}\subset\sigma_p(A)$, $\sigma(A)\setminus\{0\}\subset\sigma_{cp}(A)$, and  $0\in{\rm acc}\, \sigma_{l}(A)={\rm acc}\, \sigma_{r}(A)$, as well as $0\in{\rm acc}\, \sigma_{p}(A)$ and $0\in{\rm acc}\, \sigma_{cp}(A)$.

According to  \cite[Theorems 3.19 and 3.20]{MZ} we conclude  that $A$ is  neither left  generalized Drazin invertible nor right  generalized Drazin invertible.
 Hence the class of left (right) generalized Drazin invertible operators is strictly contained in the class of left (right) generalized Drazin-Riesz  invertible operators.
\end{remark}

\begin{corollary} \label{get} Let $A\in L(X)$. Then $A$ is  generalized Drazin-Riesz invertible if and only if $A$ is both left and right generalized Drazin-Riesz invertible.
\end{corollary}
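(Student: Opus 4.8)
The plan is to characterize generalized Drazin-Riesz invertibility by a two-sided direct-sum decomposition and then observe that this decomposition splits into the one-sided conditions supplied by Theorem~\ref{prva Riesz}(iii) and Theorem~\ref{druga Riesz}(iii). Recall that $A\in L(X)$ is generalized Drazin-Riesz invertible if and only if there exists $(M,N)\in Red(A)$ such that $A_M$ is invertible and $A_N$ is Riesz; this is the known decomposition characterization underlying \cite{ZC} and is the exact two-sided analogue of the one-sided statements in the preceding theorems. The key arithmetic fact I will use is that $A_M$ is invertible if and only if $A_M$ is both left invertible and right invertible. Thus the forward direction is immediate: if $A$ is generalized Drazin-Riesz invertible via $(M,N)$, then $A_M$ is invertible, hence in particular left invertible and right invertible, so $A$ satisfies Theorem~\ref{prva Riesz}(iii) and Theorem~\ref{druga Riesz}(iii) and is therefore both left and right generalized Drazin-Riesz invertible.

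For the converse, the main issue is that the left and right hypotheses a priori furnish \emph{different} decompositions, say $(M_1,N_1)$ with $A_{M_1}$ left invertible and $A_{N_1}$ Riesz, and $(M_2,N_2)$ with $A_{M_2}$ right invertible and $A_{N_2}$ Riesz. I cannot simply intersect the subspaces; instead I would work through a common invariant, namely the spectrum at $0$. My plan is to pass to the equivalent GSRD-plus-spectral formulations. By Theorem~\ref{prva Riesz} the left hypothesis gives that $A$ admits a GSRD together with $0\notin\inter\,\sigma_{l}(A)$, and by Theorem~\ref{druga Riesz} the right hypothesis gives (a GSRD again and) $0\notin\inter\,\sigma_{r}(A)$. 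Since $\sigma(A)=\sigma_l(A)\cup\sigma_r(A)$ and $\inter\,\sigma(A)\subset\inter\,\sigma_l(A)\cup\inter\,\sigma_r(A)$, these two conditions combine to yield $0\notin\inter\,\sigma(A)$.

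The final step is to feed ``$A$ admits a GSRD and $0\notin\inter\,\sigma(A)$'' into the two-sided theory: this is precisely the hypothesis that forces the Saphar summand of the generalized Saphar-Riesz decomposition to be invertible rather than merely one-sided invertible. Concretely, fix a GSRD pair $(M,N)\in Red(A)$ with $A_M$ Saphar and $A_N$ Riesz. Since $A_N$ is Riesz, $\sigma(A_N)$ is at most countable with $0$ its only possible accumulation point, so $0\notin\inter\,\sigma(A)$ together with $\sigma(A)=\sigma(A_M)\cup\sigma(A_N)$ forces $0\notin\inter\,\sigma(A_M)$; combined with $A_M$ being Saphar, this yields (via the two-sided analogue of the punctured-neighborhood argument used for the implications (xiii)$\Longrightarrow$(iii) and (xiv)$\Longrightarrow$(iii) in Theorem~\ref{prva Riesz}, as in \cite{MZ}) that $A_M$ is invertible. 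Hence $(M,N)$ exhibits $A$ as a direct sum of an invertible operator and a Riesz operator, so $A$ is generalized Drazin-Riesz invertible. The main obstacle is exactly this last implication — reconciling the two separately-obtained one-sided decompositions — and the device that overcomes it is to route everything through the spectral interior condition at $0$ rather than through the subspaces directly.
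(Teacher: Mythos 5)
There is a genuine gap in your converse, located exactly at the step you lean on to merge the two one-sided hypotheses: the inclusion $\inter\,\sigma(A)\subset\inter\,\sigma_{l}(A)\cup\inter\,\sigma_{r}(A)$ is not a valid set-theoretic consequence of $\sigma(A)=\sigma_{l}(A)\cup\sigma_{r}(A)$. For compact sets in general, $\inter(K_{1}\cup K_{2})\not\subset\inter K_{1}\cup\inter K_{2}$: take $K_{1}$ and $K_{2}$ to be the closed left and right halves of the closed unit disc, so that $0\in\inter(K_{1}\cup K_{2})$ while $0\in\partial K_{1}\cap\partial K_{2}$, hence $0\notin\inter K_{1}\cup\inter K_{2}$. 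You give no argument special to left/right spectra that would rescue the inclusion (and nothing obvious does: a priori every neighborhood of $0$ could contain both left-invertible points of negative index and right-invertible points of positive index, with $0$ on the boundary of both regions). So as written, "these two conditions combine to yield $0\notin\inter\,\sigma(A)$" is unjustified, and since this is the hinge of your whole converse, the proof does not go through.

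The repair is to trade interiors for accumulation points, which do behave well under unions: $\acc(K_{1}\cup K_{2})=\acc K_{1}\cup\acc K_{2}$ for any sets. This is precisely the paper's route: from conditions (x) of Theorems \ref{prva Riesz} and \ref{druga Riesz}, the two hypotheses give a GSRD together with $0\notin\acc\,\sigma_{\B}^{l}(A)$ and $0\notin\acc\,\sigma_{\B}^{r}(A)$, whence $0\notin\acc(\sigma_{\B}^{l}(A)\cup\sigma_{\B}^{r}(A))=\acc\,\sigma_{\B}(A)$, and \cite[Theorem 2.3]{ZC} concludes directly. If you want to preserve the rest of your architecture, note that $0\notin\acc\,\sigma_{\B}(A)$ does legitimately yield your intermediate claim $0\notin\inter\,\sigma(A)$ (in a punctured disc around $0$ every operator $A-\lambda I$ is Browder, so every spectral point there is a Riesz point, i.e.\ isolated in $\sigma(A)$), after which your endgame --- GSRD plus $0\notin\inter\,\sigma(A)$ forces the Saphar summand $A_{M}$ to be invertible, giving an invertible-plus-Riesz splitting --- is sound and is essentially the content of Corollary \ref{cup} and \cite[Theorem 2.3]{ZC}. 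Your forward direction is fine. So the proposal is repairable with a one-line substitution (acc-type conditions in place of int-type ones), but as submitted the key merging step fails.
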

\begin{proof} $(\Longrightarrow)$: It is obvious.

$(\Longleftarrow)$: Let $A$ be both left and right generalized Drazin-Riesz invertible. Then from Theorems \ref{prva Riesz} and \ref{druga Riesz} it follows that $A$ admits a GSRD, and  $0\notin {\rm acc}\, \sigma_{\B}^l(A)$ and $0\notin {\rm acc}\, \sigma_{\B}^r(A)$. Since ${\rm acc}\, \sigma_{\B}^l(A)\cup {\rm acc}\, \sigma_{\B}^r(A)={\rm acc}( \sigma_{\B}^l(A)\cup\sigma_{\B}^r(A))={\rm acc}\, \sigma_{\B}(A)$, we get that $0\notin {\rm acc}\, \sigma_{\B}(A)$. Now from \cite[Theorem 2.3]{ZC}    it follows that $A$ is  generalized Drazin-Riesz invertible.
\end{proof}

\begin{remark}\label{pp} \rm Every left invertible operator  which is not right invertible is left  generalized Drazin-Riesz invertible, but it is not right generalized Drazin-Riesz invertible. Indeed, if $A\in L(X)$ is left invertible and if it is not right invertible, then $A$ is left  generalized Drazin-Riesz invertible, $0\in \sigma_{r}(A)$ and $0\notin \sigma_{l}(A)$. Since $\partial \, \sigma_{r}(A)\subset \sigma_{l}(A) $, we conclude that  $0$ cannot be a boundary point of $\sigma_{r}(A)$. Consequently, $0\in\inter\, \sigma_{r}(A)$, and from Theorem \ref{druga Riesz} it follows that $A$ is not right generalized Drazin-Riesz invertible.

Similarly, every right  invertible operator  which is not  left  invertible is right  generalized Drazin-Riesz invertible, but it is not left generalized Drazin-Riesz invertible.
Therefore, the class of generalized Drazin-Riesz invertible operators is strictly contained in the class of  left (right) generalized Drazin-Riesz invertible operators.
\end{remark}

\begin{corollary}\label{cup} For  $A\in L(X)$ the following conditions are equivalent:

 \snoi {\rm (i)} $A$ is  generalized Drazin-Riesz invertible;

\snoi {\rm (ii)} $A$ is either  left  generalized Drazin-Riesz invertible or right generalized Drazin-Riesz invertible, and $0\notin {\rm int}\, \sigma(A)$;

\snoi {\rm (iii)} $A$ admits a GSRD and $0\notin {\rm int}\, \sigma(A)$.

\end{corollary}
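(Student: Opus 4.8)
The plan is to prove Corollary \ref{cup} by establishing the cycle of implications (i)$\Longrightarrow$(ii)$\Longrightarrow$(iii)$\Longrightarrow$(i), leveraging the characterizations already available in Theorem \ref{prva Riesz}, Theorem \ref{druga Riesz}, Corollary \ref{get}, and the standard spectral inclusions relating the one-sided spectra to the full spectrum.

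For the implication (i)$\Longrightarrow$(ii), I would argue that if $A$ is generalized Drazin-Riesz invertible, then by Corollary \ref{get} it is in particular left (and right) generalized Drazin-Riesz invertible, so the first half of (ii) is immediate. To get $0\notin\inter\,\sigma(A)$, I would use Theorem \ref{prva Riesz}(x), which gives $0\notin\acc\,\sigma_{\B}^l(A)$, together with the symmetric statement from Theorem \ref{druga Riesz}(x) giving $0\notin\acc\,\sigma_{\B}^r(A)$; combined these yield $0\notin\acc\,\sigma_{\B}(A)$ as in the proof of Corollary \ref{get}. Since $\sigma(A)\subset\sigma_{\B}(A)$ and isolated/non-accumulation points of $\sigma_{\B}(A)$ cannot be interior points of $\sigma(A)$, one concludes $0\notin\inter\,\sigma(A)$. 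Alternatively, and perhaps more cleanly, I would pass through the GSRD decomposition $A=A_M\oplus A_N$ with $A_M$ invertible and $A_N$ Riesz (available because two-sided invertibility forces $A_M$ to be both left and right invertible, hence invertible), so that $\sigma(A)=\sigma(A_M)\cup\sigma(A_N)$ with $0\notin\sigma(A_M)$ and $0$ at worst an accumulation point from the countable Riesz spectrum $\sigma(A_N)$, directly giving $0\notin\inter\,\sigma(A)$.

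The implication (ii)$\Longrightarrow$(iii) is the step requiring the most care. Assuming $A$ is, say, left generalized Drazin-Riesz invertible with $0\notin\inter\,\sigma(A)$, I already obtain from Theorem \ref{prva Riesz}(i)$\Leftrightarrow$(viii) that $A$ admits a GSRD, so the only content to verify is the spectral condition, which is assumed outright. (The case where $A$ is right generalized Drazin-Riesz invertible is symmetric, invoking Theorem \ref{druga Riesz}.) Thus this implication is essentially a matter of recording that ``left or right generalized Drazin-Riesz invertible'' already entails the GSRD in either case. I expect the main obstacle to be organizing the argument so that a single hypothesis of \emph{either} one-sidedness suffices, which it does precisely because both Theorem \ref{prva Riesz}(i)$\Leftrightarrow$(viii) and Theorem \ref{druga Riesz}(i)$\Leftrightarrow$(viii) independently force the GSRD.

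Finally, for (iii)$\Longrightarrow$(i), I would start from a decomposition $(M,N)\in Red(A)$ with $A_M$ Saphar and $A_N$ Riesz. The condition $0\notin\inter\,\sigma(A)$, via the inclusion $\partial\sigma(A)\subset\sigma_{l}(A)\cap\sigma_{r}(A)$ and the identity theorem, yields that $A$ has SVEP at $0$ and that $A'$ has SVEP at $0$; restricting to the Saphar part $A_M$, both SVEP properties transfer, and by \cite[Theorem 2.49]{Ai} applied on both sides $A_M$ becomes simultaneously left and right invertible, hence invertible. Then $A=A_M\oplus A_N$ is a direct sum of an invertible operator and a Riesz operator, which is exactly the decomposition characterizing generalized Drazin-Riesz invertibility \cite[Theorem 2.3]{ZC}. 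The cleanest route, and the one I would adopt to avoid duplicating the SVEP analysis, is simply to note that $0\notin\inter\,\sigma(A)$ implies both $0\notin\inter\,\sigma_{l}(A)$ and $0\notin\inter\,\sigma_{r}(A)$ (since $\sigma_{l}(A),\sigma_{r}(A)\subset\sigma(A)$), so that by Theorem \ref{prva Riesz}(viii) and Theorem \ref{druga Riesz}(viii) the operator is both left and right generalized Drazin-Riesz invertible, whence Corollary \ref{get} delivers (i) at once.
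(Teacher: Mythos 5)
Your proof is correct, and its skeleton --- cycling (i)$\Longrightarrow$(ii)$\Longrightarrow$(iii)$\Longrightarrow$(i) through Theorems \ref{prva Riesz} and \ref{druga Riesz} --- matches the paper, but you wire the two endpoints differently. The paper settles both (i)$\Longrightarrow$(ii) and (iii)$\Longrightarrow$(i) by citing \cite[Theorem 2.3]{ZC}: for (i)$\Longrightarrow$(ii) that theorem directly yields $0\notin\inter\,\sigma(A)$, and for (iii)$\Longrightarrow$(i) the paper simply observes that a GSRD is in particular a GKRD and applies the same theorem. You instead keep (iii)$\Longrightarrow$(i) internal to the paper: since $\sigma_{l}(A),\sigma_{r}(A)\subset\sigma(A)$ gives $\inter\,\sigma_{l}(A)\cup\inter\,\sigma_{r}(A)\subset\inter\,\sigma(A)$, condition (viii) of both one-sided theorems is triggered and Corollary \ref{get} closes the loop; this is valid and arguably tidier, though note that Corollary \ref{get} itself rests on \cite[Theorem 2.3]{ZC}, so the dependence is hidden rather than removed. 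Two small blemishes in your (i)$\Longrightarrow$(ii): the inclusion you quote is backwards --- it is $\sigma_{\B}(A)\subset\sigma(A)$, not $\sigma(A)\subset\sigma_{\B}(A)$ --- and passing from $0\notin\acc\,\sigma_{\B}(A)$ to $0\notin\inter\,\sigma(A)$ requires the standard but unstated fact that points of $\sigma(A)\setminus\sigma_{\B}(A)$ are Riesz points, hence isolated in $\sigma(A)$, so that a punctured neighbourhood of an interior point of $\sigma(A)$ cannot consist of Browder points. Your alternative argument via a decomposition $A=A_M\oplus A_N$ with $A_M$ invertible and $A_N$ Riesz sidesteps this, but the existence of that decomposition should be credited to \cite[Theorem 2.3]{ZC} rather than to the loose remark that two-sidedness ``forces $A_M$ to be invertible'': the decompositions produced by Theorem \ref{prva Riesz}(iii) and Theorem \ref{druga Riesz}(iii) are a priori different pairs, so one cannot upgrade either summand directly.
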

\begin{proof} (i)$\Longrightarrow$(ii): Let $A$ be generalized Drazin-Riesz invertible. Then $A$ is both left and right generalized Drazin-Riesz invertible and from   \cite[Theorem 2.3]{ZC} 
 it follows that $0\notin {\rm int}\, \sigma(A)$.

(ii)$\Longrightarrow$(iii):  Let $A$ be either  left  generalized Drazin-Riesz invertible or right generalized Drazin-Riesz invertible, and $0\notin {\rm int}\, \sigma(A)$. Then from Theorems \ref{prva Riesz} and \ref{druga Riesz} it follows that $A$ admits a GSRD

(iii)$\Longrightarrow$(i): Let $A$ admit a GSRD and $0\notin {\rm int}\sigma(A)$. Then $A$ admits a GKRD and using      \cite[Theorem 2.3]{ZC}  we conclude that $A$ is  generalized Drazin-Riesz invertible.
\end{proof}
\begin{corollary} Let   $A\in L(X)$. If $0\notin {\rm int}\, \sigma(A)$, then $A$ admits a GKRD if and only if $A$ admits a GSRD.
\end{corollary}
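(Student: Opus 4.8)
The plan is to observe that the corollary is an immediate consequence of Corollary \ref{cup} together with the characterization of generalized Drazin-Riesz invertibility in \cite[Theorem 2.3]{ZC}, so that essentially no new work is required beyond correctly chaining the already established equivalences. The two implications are of very different difficulty: one is purely definitional, while the other carries all the content through the spectral hypothesis $0\notin\inter\,\sigma(A)$.

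First I would dispose of the implication ``$A$ admits a GSRD $\Longrightarrow$ $A$ admits a GKRD'', which in fact holds unconditionally (the hypothesis $0\notin\inter\,\sigma(A)$ is not needed for it). Indeed, if $(M,N)\in Red(A)$ realizes a GSRD, then $A_M$ is Saphar and $A_N$ is Riesz; since by definition a Saphar operator is a Kato operator (a Kato operator whose null-space and range are moreover complemented), the very same pair $(M,N)$ exhibits $A_M$ as Kato and $A_N$ as Riesz, that is, it is a GKRD$(M,N)$.

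For the converse I would invoke \cite[Theorem 2.3]{ZC}, which characterizes $A$ as generalized Drazin-Riesz invertible precisely when $A$ admits a GKRD and $0\notin\inter\,\sigma(A)$. Thus, under the standing hypothesis $0\notin\inter\,\sigma(A)$, assuming that $A$ admits a GKRD yields that $A$ is generalized Drazin-Riesz invertible. The equivalence (i)$\Longleftrightarrow$(iii) of Corollary \ref{cup} then immediately gives that $A$ admits a GSRD, completing the argument.

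The proof presents no genuine obstacle; the only point requiring care is the correct application of \cite[Theorem 2.3]{ZC}, namely recognizing that it is exactly the bridge from a GKRD (under the interior-point condition on the spectrum) to generalized Drazin-Riesz invertibility, after which Corollary \ref{cup} converts that invertibility back into the sharper Saphar-Riesz decomposition. Should one prefer a self-contained route avoiding Corollary \ref{cup}, the work would instead be to upgrade the Kato summand $A_M$ of the GKRD to a Saphar operator; this is precisely where $0\notin\inter\,\sigma(A)$ enters, forcing $A_M$ to be invertible and hence Saphar, and it is this same step that already underlies the proof of Corollary \ref{cup}.
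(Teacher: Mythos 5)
Your proposal is correct and follows essentially the same route as the paper, whose proof is exactly the one-line chaining of Corollary \ref{cup} with \cite[Theorem 2.3]{ZC}: Saphar implies Kato gives GSRD $\Longrightarrow$ GKRD unconditionally, while GKRD together with $0\notin{\rm int}\,\sigma(A)$ yields generalized Drazin-Riesz invertibility by \cite[Theorem 2.3]{ZC}, and the equivalence (i)$\Longleftrightarrow$(iii) of Corollary \ref{cup} then returns a GSRD. Your additional remarks (the hypothesis being needed only in one direction, and the self-contained alternative of upgrading the Kato summand) are accurate but go beyond what the paper records.
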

\begin{proof} It follows from Corollary \ref{cup} and  \cite[Theorem 2.3]{ZC}.
\end{proof}

\begin{proposition}\label{treca Riesz} Let $A\in L(X)$. Then:

\snoi {\rm (i)} If there exists $(M,N)\in Red(A)$ such that $ A_M$ is left invertible   and $ A_N$ is Riesz, then $(N^\bot, M^\bot)\in Red (A^\prime  )$, ${A^\prime}_{N^\bot}$ is right invertible and ${A^\prime}_{M^\bot}$ is Riesz.

\snoi {\rm (ii)} If there exists $(M,N)\in Red(A)$ such that $ A_M$ is right invertible   and $ A_N$ is Riesz, then $(N^\bot, M^\bot)\in Red (A^\prime  )$, ${A^\prime}_{N^\bot}$ is left invertible and ${A^\prime}_{M^\bot}$ is Riesz.
\end{proposition}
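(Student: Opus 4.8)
The plan is to pass to the dual by means of the bounded projector associated with the reduction $(M,N)$ and then to identify the two restrictions of $A^\prime$ with the transposes $(A_M)^\prime$ and $(A_N)^\prime$. Let $P\in L(X)$ be the projector with $\R(P)=M$ and $\N(P)=N$. Since $(M,N)\in Red(A)$, both $M$ and $N$ are $A$-invariant, so $P$ commutes with $A$. Taking duals and using $(ST)^\prime=T^\prime S^\prime$, the projector $P^\prime\in L(X^\prime)$ commutes with $A^\prime$. Because $\R(P^\prime)=\N(P)^\bot=N^\bot$ and $\N(P^\prime)=\R(P)^\bot=M^\bot$, we obtain $X^\prime=N^\bot\oplus M^\bot$ with both summands closed and $A^\prime$-invariant; hence $(N^\bot,M^\bot)\in Red(A^\prime)$. (Complementedness of each annihilator is in any case guaranteed by Lemma \ref{bot}.)

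Next I would set up the canonical topological isomorphisms that tie the restrictions of $A^\prime$ to the transposes of the restrictions of $A$. The restriction map $f\mapsto f|_M$ carries $N^\bot$ isomorphically onto $M^\prime$: it is injective on $N^\bot$, since a functional vanishing on both $M$ and $N$ vanishes on $X=M\oplus N$, and it is surjective, since every $g\in M^\prime$ is the restriction of $g\circ P\in N^\bot$; boundedness of $P$ makes both this map and its inverse bounded. For $f\in N^\bot$ and $m\in M$ one has $(A^\prime f)(m)=f(Am)=f(A_M m)=\big((A_M)^\prime(f|_M)\big)(m)$, because $A_M m\in M$, so the isomorphism intertwines $(A^\prime)_{N^\bot}$ with $(A_M)^\prime$. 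In exactly the same way, $f\mapsto f|_N$ is a topological isomorphism of $M^\bot$ onto $N^\prime$ intertwining $(A^\prime)_{M^\bot}$ with $(A_N)^\prime$.

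Finally I would invoke the behaviour of these properties under transposition, which is preserved under the topological conjugacies just produced. For (i): $A_M$ left invertible forces $(A_M)^\prime$ right invertible, whence $(A^\prime)_{N^\bot}$ is right invertible; and $A_N$ Riesz forces $(A_N)^\prime$ Riesz (the Riesz property is stable under duality, since for $\lambda\neq0$ the operator $A_N-\lambda I$ is Fredholm and the defect numbers of an operator with closed range and of its dual interchange), whence $(A^\prime)_{M^\bot}$ is Riesz. Part (ii) is identical with ``left'' and ``right'' interchanged: $A_M$ right invertible gives $(A^\prime)_{N^\bot}$ left invertible, while the Riesz conclusion for $(A^\prime)_{M^\bot}$ is unchanged. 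The only genuinely delicate point is the bookkeeping in the second paragraph --- matching $N^\bot$ (rather than $M^\bot$) with the $M$-factor and verifying the intertwining identity --- together with the citation of the standard duality facts that left/right invertibility swap and that the Riesz property transfers to the dual; once these are in place the conclusion is immediate.
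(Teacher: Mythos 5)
Your proof is correct, and it takes a genuinely different route from the paper's after the common first step: both proofs dualize the projector $P$ with $\R(P)=M$, $\N(P)=N$ to get $(N^{\bot},M^{\bot})\in Red(A^\prime)$, but from there the paper works directly with annihilators, computing $\N(A^{\prime}_{N^{\bot}})=(\R(A_M)\oplus N)^{\bot}$ and $\R(A^{\prime}_{N^{\bot}})=(\N(A_M)\oplus N)^{\bot}$ and then invoking Lemmas \ref{glavna lema}, \ref{bot} and \ref{complemented in} to obtain the complementedness needed for one-sided invertibility of $A^{\prime}_{N^{\bot}}$, while for the Riesz part it applies duality of Riesz operators to the global operator $AQ=0_M\oplus A_N$ and restricts via Lemma \ref{Riesz} (i). You instead conjugate: the restriction isomorphisms $N^{\bot}\cong M^{\prime}$ and $M^{\bot}\cong N^{\prime}$ intertwine $A^{\prime}_{N^{\bot}}$ with $(A_M)^{\prime}$ and $A^{\prime}_{M^{\bot}}$ with $(A_N)^{\prime}$, after which everything reduces to one-line duality facts: $BA_M=I_M$ gives $(A_M)^{\prime}B^{\prime}=I_{M^{\prime}}$ (so left and right invertibility swap under duals), and the Riesz property passes to duals since $A_N-\lambda I_N$ is Fredholm for $\lambda\neq 0$ and Fredholm duality interchanges $\alpha$ and $\beta$. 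Your intertwining identity and the matching of $N^{\bot}$ with the $M$-factor are verified correctly, and the purely algebraic dualization is legitimate because one-sided invertibility of an operator (injectivity, resp.\ surjectivity, with complemented range, resp.\ null-space) coincides with one-sided invertibility in the algebra $L(X)$ --- an equivalence the paper itself records in the proof of Lemma \ref{suma left}. Your route is more modular and uniform (parts (i) and (ii) are the same computation with the roles of left and right exchanged), and it sidesteps identities such as $\R(A^{\prime})=\N(A)^{\bot}$, which in general require closed range and hence some care here, since $\R(A_N)$ need not be closed for a Riesz operator. What the paper's more computational approach buys is the explicit annihilator description of $\N(A^{\prime}_{N^{\bot}})$ and $\R(A^{\prime}_{N^{\bot}})$, which it reuses verbatim in the proof of Proposition \ref{GSD} to conclude that $A^{\prime}_{N^{\bot}}$ is Kato, hence Saphar; your conjugacy would deliver that as well, but only via the additional fact that the Saphar property is inherited by duals.
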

\begin{proof} (i): Suppose that $(M,N)\in Red(A)$, $ A_M$ is left invertible   and $ A_N$ is Riesz.  Let $P$ be the projector  of $X$ such that $\R(P)=M$ and $\N(P)=N$. Then $P^\prime$ is a projector which commutes with $A^\prime$. As $\R(P^\prime)=N^{\bot}$ and $\N(P^{\prime})=M^{\bot}$, we obtain that
  $(N^\bot, M^\bot)\in Red (A^\prime)$. 
  Since $A_M$ is left invertible, we have that $\N(A_M)=\{0\}$ and so
\begin{equation*}\label{po}
    \R(A^{\prime}_{N^{\bot}}) =\R(A^{\prime})\cap N^{\bot}=\N(A)^{\bot}\cap N^{\bot}=(\N(A)+N)^{\bot}
   = (\N(A_M)\oplus N)^{\bot}=N^{\bot},
\end{equation*}
i.e.
   $A^{\prime}_{N^{\bot}}$ is surjective. Further we have that
  \begin{eqnarray}
    \N(A^{\prime}_{N^{\bot}}) =\N(A^{\prime})\cap N^{\bot}=\R(A)^{\bot}\cap N^{\bot}=(\R(A)+N)^{\bot} = (\R(A_M)\oplus N)^{\bot}\label{po1}
  \end{eqnarray}
 Since $A_M$ is left invertible,  it follows that  $\R(A_M)$ is complemented in $M$. According to Lemma \ref{glavna lema} we conclude that $\R(A_M)\oplus N$ is complemented in $X$ and by using Lemma \ref{bot}  we get that  $(\R(A_M)\oplus N)^{\bot}$ is complemented in $X^{\prime}$. As $N^{\bot}$ is a closed subspace of $X^\prime$ which contains  $(\R(A_M)\oplus N)^{\bot}$, applying Lemma \ref{complemented in} we conclude that $(\R(A_M)\oplus N)^{\bot}$ is  complemented in $N^{\bot}$. According to  \eqref{po1} we have that  $\N(A^{\prime}_{N^{\bot}})$ is complemented in $N^{\bot}$, and hence  ${A^\prime}_{N^\bot}$ is right invertible.

 Let $Q=I-P$. Then $Q$ is a projector, $\R(Q)=N$, $\N(Q)=M$,
$(M,N)\in Red (AQ)$,  $AQ=0_M\oplus A_N$, and so  Lemma \ref{Riesz} (i)  ensures that  $AQ$ is Riesz. Also we have that $(N^\bot, M^\bot)\in Red (A^\prime Q^\prime )$.
From \cite[Corollary 3.10]{SANU} it follows that $A^\prime Q^\prime=Q^\prime A^\prime$ is Riesz. 
 As   $\R(Q^\prime)=\N(Q)^\bot=M^\bot$, using Lemma \ref{Riesz} (i) we obtain that ${A^\prime Q^\prime}_{M^\bot}=A^\prime _{M^\bot}$ is Riesz.


(ii):  Suppose that $(M,N)\in Red(A)$, $ A_M$ is right invertible   and $ A_N$ is Riesz. Then $\N(A_M)$ is a complemented subspace of $M$ and $\R(A_M)=M$.   As in the proof of (i) we conclude that $A^\prime _{M^\bot}$ is Riesz,
  \begin{eqnarray*}
    \N(A^{\prime}_{N^{\bot}})=(\R(A_M)\oplus N)^{\bot} = (M\oplus N)^{\bot}= \{0\},
  \end{eqnarray*}
  that is $A^{\prime}_{N^{\bot}}$ is injective, and
  \begin{equation*}\label{pore}
    \R(A^{\prime}_{N^{\bot}})
   = (\N(A_M)\oplus N)^{\bot}.
\end{equation*}
 As $\N(A_M)$ is complemented in $M$, from Lemma \ref{glavna lema} it follows that $\N(A_M)\oplus N$ is  complemented in $X$. Using  Lemma \ref{bot} we get that $(\N(A_M)\oplus N)^{\bot}$ is  complemented in $X^{\prime}$. As $N^{\bot}$ is a closed subspace of $X^\prime$ which contains $(\N(A_M)\oplus N)^{\bot}$, applying Lemma \ref{complemented in} we conclude that $(\N(A_M)\oplus N)^{\bot}$ is  complemented in $N^{\bot}$. 
As $\R(A^{\prime}_{N^{\bot}})$  is complemented in $N^{\bot}$ and $A^{\prime}_{N^{\bot}}$ is injective, we get that ${A^\prime}_{N^\bot}$ is left invertible.
\end{proof}

\begin{corollary} Let $A\in L(X)$. Then:

\snoi {\rm (i)} If $A$ is left generalized Drazin-Riesz invertible,  then $A^\prime$ is  right generalized Drazin-Riesz invertible.

\snoi {\rm (ii)} If  $A$ is right  generalized Drazin-Riesz invertible, then $A^\prime$ is left   generalized Drazin-Riesz invertible.
\end{corollary}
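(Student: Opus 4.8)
The plan is to combine the structural characterizations of one-sided generalized Drazin-Riesz invertibility from Theorems \ref{prva Riesz} and \ref{druga Riesz} with the duality statement in Proposition \ref{treca Riesz}, which transfers a reducing decomposition of $A$ into one for $A^\prime$. The whole argument amounts to a translation dictionary: ``left'' invertibility on $A$ becomes ``right'' invertibility on $A^\prime$ and vice versa, while the Riesz part is preserved.

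For part (i), I would start by assuming $A$ is left generalized Drazin-Riesz invertible. By the equivalence (i)$\Longleftrightarrow$(iii) in Theorem \ref{prva Riesz}, there exists a pair $(M,N)\in Red(A)$ with $A_M$ left invertible and $A_N$ Riesz. Applying Proposition \ref{treca Riesz} (i) directly yields $(N^\bot, M^\bot)\in Red(A^\prime)$ with ${A^\prime}_{N^\bot}$ right invertible and ${A^\prime}_{M^\bot}$ Riesz. Then the equivalence (i)$\Longleftrightarrow$(iii) in Theorem \ref{druga Riesz}, applied to the decomposition $(N^\bot, M^\bot)$ of $A^\prime$, gives that $A^\prime$ is right generalized Drazin-Riesz invertible.

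Part (ii) is entirely analogous: assume $A$ is right generalized Drazin-Riesz invertible, invoke Theorem \ref{druga Riesz} to obtain $(M,N)\in Red(A)$ with $A_M$ right invertible and $A_N$ Riesz, apply Proposition \ref{treca Riesz} (ii) to get $(N^\bot, M^\bot)\in Red(A^\prime)$ with ${A^\prime}_{N^\bot}$ left invertible and ${A^\prime}_{M^\bot}$ Riesz, and conclude via Theorem \ref{prva Riesz} that $A^\prime$ is left generalized Drazin-Riesz invertible.

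Since all the heavy lifting, both the algebraic-to-structural translation and the passage to the adjoint decomposition, is already carried out in the cited results, there is no genuine obstacle here; the only point requiring care is bookkeeping, namely ensuring that the correct part of Proposition \ref{treca Riesz} and the correct companion theorem are invoked in each case so that the left/right swap under duality is matched consistently.
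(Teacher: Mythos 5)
Your proposal is correct and is precisely the paper's own argument: the paper proves this corollary by citing Proposition \ref{treca Riesz} together with the equivalence (i)$\Longleftrightarrow$(iii) in Theorems \ref{prva Riesz} and \ref{druga Riesz}, exactly the three ingredients you combine. You have merely written out the chain of implications that the paper leaves implicit, with the left/right bookkeeping handled correctly in both parts.
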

\begin{proof} It follows from Proposition \ref{treca Riesz} and  the equivalence (i)$\Longleftrightarrow$(iii) in Theorem \ref{prva Riesz} and Theorem \ref{druga Riesz}.
\end{proof}

\begin{proposition}\label{GSD} Let $A\in L(X)$.  If $A$ admits a $GSRD(M, N)$, then $A^\prime$  admits a \break
$GSRD(N^{\bot}, M^{\bot})$.
\end{proposition}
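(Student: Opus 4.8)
The plan is to imitate the proof of Proposition \ref{treca Riesz}, replacing the role of ``left/right invertible'' by ``Saphar'' and using that the Saphar property is preserved under passage to the adjoint. First I would let $P\in L(X)$ be the projector with $\R(P)=M$ and $\N(P)=N$. Since $(M,N)\in Red(A)$, $P$ commutes with $A$, hence $P'$ commutes with $A'$; as $\R(P')=\N(P)^\bot=N^\bot$ and $\N(P')=\R(P)^\bot=M^\bot$, this gives $(N^\bot,M^\bot)\in Red(A')$. It then remains to check that $A'_{N^\bot}$ is Saphar and $A'_{M^\bot}$ is Riesz.

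For the Riesz summand the argument is verbatim the one in Proposition \ref{treca Riesz}: putting $Q=I-P$ we have $AQ=0_M\oplus A_N$, which is Riesz by Lemma \ref{Riesz} (i); by \cite[Corollary 3.10]{SANU} the operator $A'Q'=Q'A'$ is Riesz, and since $\R(Q')=\N(Q)^\bot=M^\bot$, restricting and applying Lemma \ref{Riesz} (i) once more shows that $A'_{M^\bot}=(A'Q')_{M^\bot}$ is Riesz.

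The core of the argument is the Saphar summand. Here I would identify $A'_{N^\bot}$ with $(A_M)'$: the restriction map $f\mapsto f|_M$ is a topological isomorphism of $N^\bot$ onto $M'$, and for $f\in N^\bot$ and $x=x_M+x_N$ one computes $(A'f)(x)=f(A_Mx_M)$, so this map intertwines $A'_{N^\bot}$ with $(A_M)'$. Thus it suffices to show that the adjoint of a Saphar operator is Saphar. Since $A_M$ is Kato its range is closed, so the closed range theorem yields $\R((A_M)')=\N(A_M)^\bot$ and $\N((A_M)')=\R(A_M)^\bot$; moreover the semiregularity of $A_M$ transfers to $(A_M)'$, so $(A_M)'$ is Kato. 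Finally $\N(A_M)$ and $\R(A_M)$ are complemented in $M$, so by Lemma \ref{bot} their annihilators $\R((A_M)')$ and $\N((A_M)')$ are complemented in $M'$; hence $(A_M)'$, and therefore $A'_{N^\bot}$, is Saphar.

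The only genuinely non-formal step is the stability of the Kato (semiregular) property under adjunction, which I expect to be the main obstacle; the remaining points are the complementation bookkeeping handled by Lemma \ref{bot} and the closed range theorem, together with the Riesz duality already invoked in the proof of Proposition \ref{treca Riesz}.
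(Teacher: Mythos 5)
Your proof is correct, and while it reaches the conclusion through the same essential ingredients as the paper, it organizes the Saphar step differently. The paper stays inside $X^\prime$: arguing as in Proposition \ref{treca Riesz}, it computes $\N(A^\prime_{N^\bot})=(\R(A_M)\oplus N)^\bot$ and $\R(A^\prime_{N^\bot})=(\N(A_M)\oplus N)^\bot$, shows these are complemented in $N^\bot$ via Lemmas \ref{glavna lema}, \ref{bot} and \ref{complemented in}, and then invokes the \emph{proof} of \cite[Theorem 1.43]{Ai} to conclude that $A^\prime_{N^\bot}$ is Kato --- the proof rather than the statement, because the statement concerns the adjoint of an operator, not a restriction of an adjoint. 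Your reduction removes exactly this awkwardness: conjugating by the canonical topological isomorphism $N^\bot\cong M^\prime$, $f\mapsto f|_M$, which intertwines $A^\prime_{N^\bot}$ with $(A_M)^\prime$ (and the Saphar property is clearly similarity-invariant), lets you quote the duality theorem as a black box: $(A_M)^\prime$ is Kato, the closed range theorem identifies $\R((A_M)^\prime)=\N(A_M)^\bot$ and $\N((A_M)^\prime)=\R(A_M)^\bot$ (annihilators in $M^\prime$), and Lemma \ref{bot} applied with $M$ in place of $X$ gives their complementedness. The step you flag as the main obstacle, stability of semiregularity under adjoints, is precisely \cite[Theorem 1.43]{Ai} (for a Kato operator each $\R(A_M^n)$ is closed and $\N(A_M^n)\subset\R(A_M)$, whence $\N((A_M)^\prime)=\R(A_M)^\bot\subset\N(A_M^n)^\bot=\R(((A_M)^\prime)^n)$), so there is no gap; what your route buys is a cleanly quotable intermediate statement --- the adjoint of a Saphar operator is Saphar --- at the small cost of verifying the intertwining isomorphism, while the paper's direct computation in $X^\prime$ avoids that identification but must reopen Aiena's proof. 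The Riesz half of your argument is verbatim the paper's.
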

\begin{proof} Let $A$ admit a $GSRD(M, N)$. Then  $(M,N) \in Red(A)$,
$A_M$ is Saphar and $A_N$ is Riesz. As in the proof of Proposition \ref{treca Riesz} we get that $(N^\bot, M^\bot)\in Red (A^\prime)$, $A^\prime _{M^\bot}$ is Riesz and   the subspaces  $\N(A^{\prime}_{N^{\bot}})=(\R(A_M)\oplus N)^{\bot}$ and   $\R(A^{\prime}_{N^{\bot}}) =(\N(A_M)\oplus N)^{\bot}$,   are complemented  in $N^{\bot}$.
  From the proof of \cite[Theorem 1.43]{Ai} it follows  that ${A^\prime}_{N^\bot}$ is Kato. Therefore, ${A^\prime}_{N^\bot}$ is Saphar, and so $A^\prime$  admits a
$GSRD(N^{\bot}, M^{\bot})$.
\end{proof}

\section{Left and right generalized  Drazin-meromorphic invertible operators}

In this section we introduce and explore the concept of  generalized Saphar-meromorphic
decomposition and the concept of left and right generalized  Drazin-meromorphic invertible operators. Also we show that  on an arbitrary Banach space  the class of left (resp., right)  generalized Drazin-Riesz invertible operators does not coincide with the class of upper (resp., lower) generalized Drazin-Riesz invertible operators, and  the class of left (resp., right)   generalized Drazin-meromorphic invertible operators does not coincide with the class of upper (resp., lower)  generalized Drazin-meromorphic  invertible operators (see definitions on page 15).

\begin{definition} \rm Let $A\in L(X)$. If  there exists a pair $(M,N)
\in Red(A)$ such that $A_M$ is Saphar and $A_N$ is meromorphic, then we say that $A$ admits  a {\em generalized Saphar-meromorphic
decomposition}, or shortly $A$ admits a GS($\M$)D$(M,N)$ (or   $A$ admits a GS($\M$)D).
\end{definition}

\begin{theorem} Let $A\in L(X)$. The following conditions are equivalent:

\snoi {\rm (i)}  $A$ admits a GS($\M$)D;

\snoi {\rm (ii)} There exists a projector $P\in L(X)$   commuting with $A$
such that $A+P$ is of Saphar type  and $AP$ is meromorphic.
\end{theorem}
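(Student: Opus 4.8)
The plan is to mirror the proof of Theorem \ref{prva S} (the equivalence of its conditions (i) and (iii)), replacing the Riesz hypotheses by meromorphic ones and swapping in the corresponding tools: Lemma \ref{Riesz} (ii) in place of Lemma \ref{Riesz} (i) for stability of the class under a reducing direct sum, and the fact that a nonzero shift of a meromorphic operator is Drazin invertible in place of the fact that $A_N+I_N$ is Browder when $A_N$ is Riesz.

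For (i)$\Longrightarrow$(ii) I would start from $(M,N)\in Red(A)$ with $A_M$ Saphar and $A_N$ meromorphic, take $P$ to be the projector onto $N$ along $M$, and observe as in Theorem \ref{prva S} that $P$ commutes with $A$, so $(M,N)$ reduces both $AP$ and $A+P$. Then $AP=0_M\oplus A_N$ is meromorphic by Lemma \ref{Riesz} (ii). The key step is to analyse $(A+P)_N=A_N+I_N$: since $A_N$ is meromorphic and $-1\neq 0$, the point $-1$ is either a resolvent point or a pole of the resolvent of $A_N$, so $A_N+I_N$ is Drazin invertible; the classical invertible-plus-nilpotent decomposition of a Drazin invertible operator then produces $(N_1,N_2)\in Red((A+P)_N)$ with $(A+P)_{N_1}$ invertible and $(A+P)_{N_2}$ nilpotent. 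I would then form the closed, $A+P$-invariant subspace $M\oplus N_1$ (Lemma \ref{zatvorenost}), note $(M\oplus N_1,N_2)\in Red(A+P)$, and use Lemma \ref{Riesz} (iii) to see that $(A+P)_{M\oplus N_1}$ is Saphar (being the direct sum of the Saphar operator $A_M$ and the invertible operator $(A+P)_{N_1}$), which together with the nilpotent $(A+P)_{N_2}$ shows that $A+P$ is of Saphar type.

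For (ii)$\Longrightarrow$(i) I would reverse the bookkeeping exactly as in Theorem \ref{prva S}: from the commuting projector $P$ with $A+P$ of Saphar type and $AP$ meromorphic, the pair $(\R(P),\N(P))$ reduces both operators, so $A_{\R(P)}=(AP)_{\R(P)}$ is meromorphic (Lemma \ref{Riesz} (ii)) and $A_{\N(P)}=(A+P)_{\N(P)}$ is of Saphar type (Lemma \ref{Saphar type}). Decomposing the latter as $A_{N_1}\oplus A_{N_2}$ with $A_{N_1}$ Saphar and $A_{N_2}$ nilpotent, and regrouping via Lemma \ref{zatvorenost} into $(N_1,N_2\oplus\R(P))\in Red(A)$, the summand $A_{N_2}\oplus A_{\R(P)}$ is meromorphic (a nilpotent operator is Riesz, hence meromorphic), exhibiting a GS($\M$)D of $A$.

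The only genuinely new point, and the step I expect to need the most care, is the passage $(A+P)_N=A_N+I_N$. In the Riesz case this operator is Browder, which forces a finite-dimensional nilpotent summand and hence the stronger ``essentially Saphar'' conclusion of Theorem \ref{prva S}; in the meromorphic case one obtains only Drazin invertibility, so the nilpotent part may be infinite-dimensional and the best available conclusion is ``of Saphar type'' (this is exactly why the present theorem omits the essentially-Saphar condition). Establishing that a nonzero spectral point of a meromorphic operator, being a pole of the resolvent, makes the shifted operator have finite ascent and descent, equivalently Drazin invertible, is the crux; the remainder is the same direct-sum reduction already carried out in Theorem \ref{prva S}.
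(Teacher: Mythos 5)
Your proposal is correct and takes essentially the same route as the paper, whose entire proof is the one-line remark that the result follows from Lemma \ref{Riesz} (ii) analogously to the proof of Theorem \ref{prva S}; your write-up simply makes that intended adaptation explicit. In particular, you correctly identified the one point where the arguments diverge: since a nonzero point is a pole of the resolvent (or a resolvent point) of the meromorphic $A_N$, the operator $A_N+I_N$ is only Drazin invertible rather than Browder, so the nilpotent summand may be infinite-dimensional and the conclusion for $A+P$ is ``of Saphar type'' rather than ``essentially Saphar,'' exactly as the statement requires.
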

\begin{proof}
 It follows from Lemma \ref{Riesz} (ii) analogously to the proof of Theorem \ref{prva S}.
\end{proof}

\begin{definition}\label{left-M-Drazin} \rm An operator
$A\in L(X)$ is  {\it left  generalized Drazin-meromorphic invertible} if there is $B\in L(X)$ such that
\begin{equation*}
ABA=BA^2,\ B^2A=B, \ A-ABA \ {\rm is\ meromorphic}.
\end{equation*}

An operator
$A\in L(X)$ is  {\it right  generalized Drazin-meromorphic invertible} if there is $B\in L(X)$ such that
\begin{equation*}
ABA=A^2B,\ AB^2=B, \ A-ABA \ {\rm is\ meromorphic}.
\end{equation*}
\end{definition}

It is clear that every   left (right)  generalized Drazin-Riesz invertible operators is  left (right)  generalized Drazin-meromorphic invertible. 
If $A$ is meromorphic, then it is left (right) generalized Drazin-meromorphic invertible and  $0$ is a left (right) generalized Drazin-meromorphic inverse of $A$.

\begin{definition} \rm An operator  $A\in L(X)$ is {\it left meromorphic-quasipolar} if there exists a projector   $Q\in L(X)$ satisfying
\begin{equation*}
 AQ=QA,\ A(I-Q)\ {\rm is\ meromorphic},\ Q\in L(X)A.
\end{equation*}
An operator $A\in L(X)$ is {\it right meromorphic-quasipolar} if there exists a projector $Q\in L(X)$ satisfying
\begin{equation*}
 AQ=QA,\ A(I-Q)\ {\rm is\ meromorphic },\ Q\in AL(X).
\end{equation*}
\end{definition}

In the following theorem we give some characterizations of left generalized Drazin-meromorphic invertible operators.

\begin{theorem} \label{prva Meromorphic} For $A\in L(X)$ the following conditions are equivalent:

\snoi {\rm (i)}
 $A$ is left generalized Drazin-meromorphic invertible;

\snoi {\rm (ii)}  There exists $C\in L(X)$ such that
\begin{equation*}
    ACA=CA^2,\ C^2A=C=CAC, \ A-ACA \ {\rm is\ meromorphic};
\end{equation*}

\snoi {\rm (iii)} There exists $(M,N)\in Red(A)$ such that $ A_M$ is left invertible   and $ A_N$ is meromorphic;

\snoi {\rm (iv)}
There exists a projector $P\in L(X)$   commuting with $A$
such that $A+P$ is left Drazin invertible and $AP$ is meromorphic;

\snoi {\rm (v)} There exists a projector $P\in L(X)$   commuting with $A$
such that $A(I-P)+P$ is left Drazin invertible and $AP$ is meromorphic;

\snoi {\rm (vi)}  There exist $C,D,P\in L(X)$ such that $C$ is  left Drazin invertible, $D$ is meromorphic, $P$ is a projector  commuting with $C$ and $D$,  and  $A=C(I-P)+DP$;

\snoi {\rm (vii)}
$A$ is left meromorphic-quasipolar;

\snoi {\rm (viii)} $A$ admits a GS($\M$)D  and
$0\notin {\rm int}\, \sigma_{l}(A)$;

\snoi {\rm (ix)} $A$ admits a GS($\M$)D and $A$ has the SVEP at $0$;

\snoi {\rm (x)} $A$ admits a GS($\M$)D  and
$0\notin {\rm acc}\, \sigma_{D}^l(A)$;

\snoi {\rm (xi)} $A$ admits a GS($\M$)D  and
$0\notin {\rm int}\, \sigma_{D}^l(A)$;

\snoi {\rm (xii)} $A$ admits a GS($\M$)D  and
$0\notin {\rm acc}\, \sigma_{D}^+(A)$;

\snoi {\rm (xiii)} $A$ admits a GS($\M$)D  and
$0\notin {\rm int}\, \sigma_{D}^+(A)$;


\snoi {\rm (xiv)} $A$ admits a GS($\M$)D  and
$0\notin {\rm int}\, \sigma_{p}(A)$.


\end{theorem}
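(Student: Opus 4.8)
The two results are structurally identical, so the plan is to re-run the cycle of implications established for Theorem \ref{prva Riesz} after three systematic substitutions: ``Riesz'' is replaced by ``meromorphic'' (invoking Lemma \ref{Riesz}(ii) wherever Lemma \ref{Riesz}(i) was used), ``left Browder'' is replaced by ``left Drazin invertible'' (invoking Lemma \ref{left decom} and Lemma \ref{left Drazin} in place of Lemma \ref{Riesz}(iv) and Lemma \ref{almost+Fredholm}), and ``GSRD'' is replaced by ``GS$(\M)$D''. Everything rests on two elementary facts about a meromorphic operator $R$. First, for every $\lambda\neq 0$ the operator $R-\lambda I$ is Drazin invertible: a nonzero point of $\sigma(R)$ is a pole of the resolvent, while off the spectrum $R-\lambda I$ is invertible; in particular $R+I$ is Drazin invertible. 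Second, every nilpotent operator is meromorphic, being quasinilpotent and hence Riesz. These replace the fact ``$R$ Riesz $\Rightarrow R+I$ Browder'' (\cite[Theorem 3.2]{SANU}) that drove the Riesz argument.

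First I would transfer the algebraic core (i)$\Leftrightarrow$(ii) and (i)$\Leftrightarrow$(vii) verbatim: the identities obtained from $C=BAB$, and from $Q=BA$ together with $B=QTQ$, are formal and do not depend on whether $A-ABA$ is Riesz or meromorphic. Similarly (ii)$\Rightarrow$(iii) and (iii)$\Rightarrow$(i) carry over unchanged, reading off from the projector $CA$ that $A_{\R(CA)}$ is left invertible and $A_{\N(CA)}$ is meromorphic via Lemma \ref{Riesz}(ii). This establishes the equivalence of (i), (ii), (iii), (vii).

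Next I would treat the Drazin conditions (iv)--(vi). For (iii)$\Rightarrow$(iv), with $P$ the projection onto $N$ along $M$, one has $AP=0\oplus A_N$ meromorphic and $A+P=A_M\oplus(A_N+I_N)$, where $A_M$ is left invertible (hence left Drazin invertible) and $A_N+I_N$ is Drazin invertible by the first bridge fact, so Lemma \ref{left decom} gives $A+P$ left Drazin invertible. The converse (iv)$\Rightarrow$(iii) carries the one genuinely new ingredient, replacing \cite[Theorem 5]{ZDH1}: a left Drazin invertible operator splits as a direct sum of a left invertible operator and a nilpotent one. This follows from \cite[Corollary 4.23]{ZS}, by which left Drazin invertibility means ``of Saphar type with $a(A)<\infty$''; the Saphar-type decomposition furnishes $A_M=A_{M_1}\oplus A_{M_2}$ with $A_{M_1}$ Saphar and $A_{M_2}$ nilpotent, and a Kato operator of finite ascent is injective, so the Saphar summand is in fact left invertible. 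The nilpotent summand $A_{M_2}$ is meromorphic and is absorbed into the meromorphic part exactly as in the Riesz proof. The chain (iv)$\Rightarrow$(v)$\Rightarrow$(vi)$\Rightarrow$(iii) then copies the Riesz argument with Lemma \ref{left Drazin} in place of Lemma \ref{almost+Fredholm} and Lemma \ref{left decom} in place of Lemma \ref{Riesz}(iv).

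Finally I would handle the spectral conditions. For (iii)$\Rightarrow$(viii) the meromorphic summand $A_N$ has at most countable left spectrum with $0$ as its only possible limit point, while left invertibility of $A_M$ keeps a punctured disc off $\sigma_{l}(A_M)$, whence $0\notin\inter\,\sigma_{l}(A)$; then (viii)$\Rightarrow$(ix) is the identity theorem for SVEP and (ix)$\Rightarrow$(iii) uses that a Saphar operator with SVEP at $0$ is left invertible (\cite[Theorem 2.49]{Ai}). For (iii)$\Rightarrow$(x) the second bridge fact makes $A_N-\lambda I$ Drazin invertible for all $\lambda\neq 0$, so $A-\lambda I$ is left Drazin invertible on a punctured disc and $0\notin\acc\,\sigma_{D}^{l}(A)$. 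The implications (x)$\Rightarrow$(xi)$\Rightarrow$(xiii), (x)$\Rightarrow$(xii)$\Rightarrow$(xiii) and (viii)$\Rightarrow$(xiv) are formal, using $\sigma_{D}^{+}(A)\subset\sigma_{D}^{l}(A)$ (left Drazin invertibility forces $\R(A^{a(A)+1})$ closed) and $\sigma_{p}(A)\subset\sigma_{l}(A)$ together with monotonicity of $\inter$ and $\acc$, while (xiii)$\Rightarrow$(iii) and (xiv)$\Rightarrow$(iii) apply Grabiner's punctured neighbourhood theorem as in the proof of \cite[Theorem 3.19]{MZ}. The only real obstacle is bookkeeping: unlike the left Browder case the nilpotent summand in (iv)$\Rightarrow$(iii) need not be finite dimensional, so \cite[Theorem 5]{ZDH1} cannot be quoted and one must extract the left invertible plus nilpotent splitting from the Saphar-type structure, then check that plain nilpotence suffices to rejoin the meromorphic part.
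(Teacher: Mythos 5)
Your proposal is correct and follows essentially the same route as the paper: the paper likewise proves Theorem \ref{prva Meromorphic} by adapting the proof of Theorem \ref{prva Riesz} under exactly your three substitutions, proving only (iii)$\Longrightarrow$(iv) and (iv)$\Longrightarrow$(iii) in detail and stating that the remaining implications follow by using Lemmas \ref{left Drazin}, \ref{left decom} and \ref{Riesz} (ii) in the appropriate places; your bridge facts (meromorphic $R$ gives $R-\lambda I$ Drazin invertible for $\lambda\ne 0$, nilpotents are meromorphic) are the same ones driving the paper's argument. The one point of genuine divergence is the splitting of a left Drazin invertible operator into a left invertible summand and a nilpotent summand in (iv)$\Longrightarrow$(iii): the paper simply quotes \cite[Theorem 3.4]{Ghorbel Mnif} for this, whereas you derive it from \cite[Corollary 4.23]{ZS} (left Drazin invertible $=$ Saphar type with finite ascent) together with the observation that a Kato operator of finite ascent is injective --- since $a(A)=p<\infty$ gives $\N(A)\cap\R(A^p)=\{0\}$ while the Kato condition gives $\N(A)\subset\R(A^p)$, so the Saphar summand is injective with complemented closed range, hence left invertible. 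Your derivation is valid and has the advantage of staying inside the toolkit the paper already uses (Lemma \ref{left decom} itself rests on \cite[Corollary 4.23]{ZS}), at the cost of a slightly longer argument than the direct citation; you also correctly identify why \cite[Theorem 5]{ZDH1} cannot be reused here (the nilpotent summand need no longer be finite dimensional), which is exactly the point at which the paper switches references, and plain nilpotence together with Lemma \ref{Riesz} (ii) and Lemma \ref{zatvorenost} indeed suffices to absorb $A_{M_2}$ into the meromorphic part.
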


\begin{proof} (iii)$\Longrightarrow$(iv): Let   there exist $(M,N)\in Red(A)$ such that $ A_M$ is left invertible   and $ A_N$ is meromorphic.  Let $P$ be the projector  of $X$ such that $\R(P)=N$ and $\N(P)=M$. Then  $P$ commutes with $A$, $AP$ and $A+P$ and hence $(M,N)\in Red (AP)$ and $(M,N)\in Red (A+P)$. From
$$
AP=(AP)_M\oplus (AP)_N=0\oplus A_N
$$ according to Lemma \ref{Riesz} (ii) we get that $AP$ is meromorphic. Since $A_N$ is meromorphic, it follows that  $A_N+I_N$ is Drazin invertible and hence  it is left Drazin invertible. From
  $$
A+P=(A+P)_M\oplus (A+P)_N=A_M\oplus (A_N+I_N),
$$ using Lemma \ref{left decom} we conclude that $A+P$ is left Drazin invertible.

(iv)$\Longrightarrow$(iii): Suppose that there  exists a projector $P\in L(X)$   commuting with $A$
such that $A+P$ is left Drazin invertible  and $AP$ is Riesz. Let $M=\N(P)$ and $N=\R(P)$. Then $(M,N)\in Red(A)$, $(M,N)\in Red(AP)$ and $(M,N)\in Red(A+P)$. From Lemma \ref{Riesz} (ii) we obtain that $A_N=(AP)_N$ is meromorphic, while from Lemma \ref{left decom} we get that $A_M=(A+P)_M$ is left Drazin invertible.
From \cite[Theorem 3.4]{Ghorbel Mnif}  it follows that there exist closed subspace $M_1,M_2$ of $M$ such that   $(M_1,M_2)\in Red(A_M)$, $A_M=A_{M_1}\oplus A_{M_2}$, $A_{M_1}$ is left invertible and $A_{M_2}$ is nilpotent. From Lemma \ref{zatvorenost}  it follows that $N_1=M_2\oplus N$ is a closed subspace of $X$, and so $(M_1,N_1)\in Red(A)$. Lemma \ref{Riesz} (ii) ensures that   $A_{N_1}=A_{M_2}\oplus A_{N}$ is meromorphic.

%
 The proof of the rest of the implication can be obtained by using Lemmas \ref{left Drazin}, \ref{left decom} and \ref{Riesz} (ii)  adapting the appropriate parts of the proof of Theorem \ref{prva Riesz}.
\end{proof}

The following theorem provides characterizations of right generalized Drazin-meromorphic
 invertible operators and  is obtained in a similar way.

\begin{theorem} \label{druga Meromorphic} For $A\in L(X)$ the following conditions are equivalent:

\snoi {\rm (i)}
 $A$ is right generalized Drazin-meromorphic invertible;

\snoi {\rm (ii)}  There exists $C\in L(X)$ such that
\begin{equation*}
    ACA=A^2C,\ AC^2=C=CAC, \ A-ACA \ {\rm is\ meromorphic};
\end{equation*}

\snoi {\rm (iii)} There exists $(M,N)\in Red(A)$ such that $ A_M$ is right invertible   and $ A_N$ is meromorphic;

\snoi {\rm (iv)}
There exists a projector $P\in L(X)$   commuting with $A$
such that $A+P$ is right Drazin invertible and $AP$ is meromorphic;

\snoi {\rm (v)} There exists a projector $P\in L(X)$   commuting with $A$
such that $A(I-P)+P$ is right Drazin invertible and $AP$ is meromorphic;

\snoi {\rm (vi)}  There exist $C,D,P\in L(X)$ such that $C$ is  right Drazin invertible, $D$ is meromorphic, $P$ is a projector  commuting with $C$ and $D$,  and  $A=C(I-P)+DP$;

\snoi {\rm (vii)}
$A$ is right meromorphic-quasipolar;

\snoi {\rm (viii)} $A$ admits a GS($\M$)D  and
$0\notin {\rm int}\, \sigma_{r}(A)$;

\snoi {\rm (ix)} $A$ admits a GS($\M$)D and $A^\prime$ has the SVEP at $0$;

\snoi {\rm (x)} $A$ admits a GS($\M$)D  and
$0\notin {\rm acc}\, \sigma_{D}^r(A)$;

\snoi {\rm (xi)} $A$ admits a GS($\M$)D  and
$0\notin {\rm int}\, \sigma_{D}^r(A)$;

\snoi {\rm (xii)} $A$ admits a GS($\M$)D  and
$0\notin {\rm acc}\, \sigma_{dsc}(A)$;

\snoi {\rm (xiii)} $A$ admits a GS($\M$)D  and
$0\notin {\rm int}\, \sigma_{dsc}(A)$;


\snoi {\rm (xiv)} $A$ admits a GS($\M$)D  and
$0\notin {\rm int}\, \sigma_{cp}(A)$.

\end{theorem}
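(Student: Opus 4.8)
The plan is to prove Theorem \ref{druga Meromorphic} by exploiting the symmetry between the left and right cases, essentially transcribing the proof of Theorem \ref{prva Meromorphic} with the appropriate substitutions: replace ``left invertible'' by ``right invertible'', ``left Drazin invertible'' by ``right Drazin invertible'', ``left Browder'' by ``right Browder'', the left spectra $\sigma_l,\sigma_{\B}^l,\sigma_D^l,\sigma_D^+,\sigma_p$ by their right/dual counterparts $\sigma_r,\sigma_{\B}^r,\sigma_D^r,\sigma_{dsc},\sigma_{cp}$, and the SVEP of $A$ at $0$ by the SVEP of $A'$ at $0$. The underlying structural lemmas (Lemma \ref{suma left}, Lemma \ref{Riesz}, Lemma \ref{left decom}, Lemma \ref{left Drazin}, Lemma \ref{almost+Fredholm}) are all stated symmetrically for the right case, so each appeal in the left proof has a ready right-handed analogue.

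The central equivalences (i)$\Longleftrightarrow$(ii)$\Longleftrightarrow$(iii) are the algebraic core, and I would carry them out first. For (i)$\Longrightarrow$(ii) I set $C=BAB$ and verify $ACA=A^2C$, $AC^2=C=CAC$, and $A-ACA=A-ABA$ meromorphic by the same monomial manipulations as before, now using the right-sided relations $ABA=A^2B$ and $AB^2=B$. For (ii)$\Longrightarrow$(iii), the key observation is that $AC$ (rather than $CA$) is now the relevant idempotent: from $ACA=A^2C$, $AC^2=C=CAC$ one checks $(AC)^2=AC$, $A(AC)=(AC)A$, and that $AC$ commutes with $A$, $C$, and the meromorphic remainder, yielding $(\R(AC),\N(AC))\in Red(A)$; then $A_{\N(AC)}$ is meromorphic by Lemma \ref{Riesz} (ii), while $A_{\R(AC)}(AC)_{\R(AC)}=I_{\R(AC)}$ forces $A_{\R(AC)}$ to be right invertible. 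For (iii)$\Longrightarrow$(i), given the decomposition I take $B_M$ a right inverse of $A_M$, set $B=B_M\oplus 0_N$, and verify the defining right-sided identities together with meromorphicity of $A-ABA=0_M\oplus A_N$ via Lemma \ref{Riesz} (ii).

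For the outer equivalences I would follow the left template exactly. The chain (iii)$\Longleftrightarrow$(iv)$\Longrightarrow$(v)$\Longrightarrow$(vi)$\Longrightarrow$(iii) uses Lemma \ref{left decom} and Lemma \ref{Riesz} (ii) to pass between the reducing-pair decomposition and the projector formulations, with the direction (iv)$\Longrightarrow$(iii) relying on the right-sided version of the structure theorem \cite[Theorem 3.8]{Ghorbel Mnif} (the right-Drazin analogue of \cite[Theorem 3.4]{Ghorbel Mnif}) to split $A_M$ into a right invertible and a nilpotent part. The spectral equivalences (viii)--(xiv) follow by the same punctured-neighborhood arguments, now read on the dual side: since $A_M$ is right invertible iff $A_M'$ is left invertible and right invertibility is detected by SVEP of the adjoint, the condition $0\notin \inter\,\sigma_r(A)$ and SVEP of $A'$ at $0$ play the roles that $0\notin\inter\,\sigma_l(A)$ and SVEP of $A$ at $0$ play in Theorem \ref{prva Meromorphic}, using that $\sigma_r(A)=\sigma_l(A')$, $\sigma_{dsc}(A)$ corresponds to $\sigma_D^+(A')$, and $\sigma_{cp}(A)=\sigma_p(A')$.

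The main obstacle is not any single hard computation but rather ensuring the duality bookkeeping is airtight: every place where the left proof invokes a one-sided invertibility or a one-sided spectrum must be matched with its correct dual, and in particular the spectral conditions (xii)--(xiv) must be tied to the right-sided objects $\sigma_{dsc}$ and $\sigma_{cp}$ via Grabiner's punctured-neighborhood theorem applied to the Saphar summand $A_M$, exactly as in \cite[Theorem 3.19]{MZ} but on the complementary (range/descent) side. Since all the ingredient lemmas are already symmetric and the adjoint correspondences in Proposition \ref{treca Riesz} confirm the dictionary, the proof reduces to this careful substitution, which is why it can be stated as ``obtained in a similar way.''
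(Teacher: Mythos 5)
Your proposal matches the paper's own route exactly: the paper proves only the left case (Theorem \ref{prva Meromorphic}) in full and disposes of Theorem \ref{druga Meromorphic} with the remark that it ``is obtained in a similar way,'' which is precisely the left-to-right substitution you carry out --- the idempotent $Q=AC$ in place of $CA$, the right inverse $B=B_M\oplus 0_N$, the right-handed versions of Lemmas \ref{left Drazin}, \ref{left decom}, \ref{Riesz} and \cite[Theorem 3.8]{Ghorbel Mnif}, and the dual spectral dictionary $\sigma_r(A)=\sigma_l(A^\prime)$, $\sigma_{cp}(A)=\sigma_p(A^\prime)$ with SVEP of $A^\prime$ at $0$. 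One small slip to correct: in (ii)$\Longrightarrow$(iii) the identity should read $A_{\R(AC)}C_{\R(AC)}=(AC)_{\R(AC)}=I_{\R(AC)}$, not $A_{\R(AC)}(AC)_{\R(AC)}=I_{\R(AC)}$, since it is $C_{\R(AC)}$ that serves as the right inverse of $A_{\R(AC)}$.
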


From Remark \ref{ne mogu} it follows that the condition that $0\notin{\rm int}\, \sigma_{l }(A)$    ($0\notin{\rm int}\, \sigma_{r}(A)$) in the statement (viii) in Theorem \ref{prva Meromorphic} (Theorem \ref{druga Meromorphic})  can not be replaced with the stronger condition that $0\notin{\rm acc}\, \sigma_{l}(A)$ ($0\notin{\rm acc}\, \sigma_{r}(A)$).
Also, the condition that $0\notin{\rm int}\, \sigma_{p }(A)$    ($0\notin{\rm int}\, \sigma_{cp}(A)$) in the statement (xiv) in Theorem \ref{prva Meromorphic} (Theorem \ref{druga Meromorphic}), can not be replaced with the stronger condition that $0\notin{\rm acc}\, \sigma_{p}(A)$ ($0\notin{\rm acc}\, \sigma_{cp}(A)$).

The following example shows that the class of left (right) generalized Drazin-Riesz invertible operators is strictly contained in the class of left (right) generalized Drazin-meromorphic invertible operators.

\begin{example}\rm  Let $X$ be an infinite dimensional Banach  space and let
$\tilde X=\oplus_{n=1}^{\infty} X_i$  where $X_i=X$, $i\in\NN$. Let $A=\oplus_{n=1}^{\infty}(\frac 1n I)$ where $I$ is the  identity on $X$. In \cite[Example 3.9]{ZC} it is shown that
$A\in L(\tilde X)$ is a meromorphic operator which does not admit a GKRD. Hence $A$ is both  left and right generalized Drazin-meromorphic invertible, and $A$  does not admit a GSRD. From Theorems \ref{prva Riesz} and  \ref{druga Riesz}  it follows that $A$ is  neither left  generalized Drazin-Riesz invertible nor right  generalized Drazin-Riesz invertible.
\end{example}

\begin{theorem} Let $A\in L(X)$ be left (resp., right) generalized Drazin-meromorphic invertible. Then there exists an operator  $B\in L(X)$ commuting with $A$ such that $B$ is  Drazin invertible, $A+B$ is left (resp., right) invertible and  $AB$ is meromorphic.
\end{theorem}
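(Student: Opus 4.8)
The plan is to read the required operator directly off the direct-sum characterization of one-sided generalized Drazin-meromorphic invertibility established above, so that almost no new work is needed. I first treat the left case. By the equivalence (i)$\Longleftrightarrow$(iii) in Theorem \ref{prva Meromorphic}, left generalized Drazin-meromorphic invertibility of $A$ supplies a pair $(M,N)\in Red(A)$ with $A_M$ left invertible and $A_N$ meromorphic, so that $A=A_M\oplus A_N$. Since $\sigma(A_N)$ is compact, I fix a scalar $\lambda\neq 0$ with $-\lambda\notin\sigma(A_N)$ (for instance any $\lambda$ whose modulus exceeds the spectral radius of $A_N$); then $A_N+\lambda I_N$ is invertible. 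I then set $B=0_M\oplus\lambda I_N$.

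It remains to verify the three requirements for this $B$. As $B$ is block-diagonal with respect to the same decomposition $X=M\oplus N$ that diagonalizes $A$, it commutes with $A$, and one computes $AB=0_M\oplus\lambda A_N$ and $A+B=A_M\oplus(A_N+\lambda I_N)$. The operator $B$ is the direct sum of the nilpotent operator $0_M$ and the invertible operator $\lambda I_N$, so $a(B)<\infty$ and $d(B)<\infty$, whence $B$ is Drazin invertible. Since $0_M$ is (vacuously) meromorphic and $\lambda A_N$ is meromorphic (a non-zero scalar multiple of a meromorphic operator is again meromorphic), Lemma \ref{Riesz} (ii) shows that $AB$ is meromorphic. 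Finally $A_M$ is left invertible and $A_N+\lambda I_N$ is invertible, hence left invertible, so Lemma \ref{suma left} yields that $A+B$ is left invertible.

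For the right case the argument is identical: I use the equivalence (i)$\Longleftrightarrow$(iii) in Theorem \ref{druga Meromorphic} to obtain $(M,N)\in Red(A)$ with $A_M$ right invertible and $A_N$ meromorphic, take the same $B=0_M\oplus\lambda I_N$, and invoke Lemma \ref{suma left} to conclude that $A+B=A_M\oplus(A_N+\lambda I_N)$ is right invertible, the Drazin invertibility of $B$ and the meromorphy of $AB$ being unchanged.

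I do not anticipate a genuine obstacle: once the direct-sum form from Theorem \ref{prva Meromorphic} (resp.\ Theorem \ref{druga Meromorphic}) is in hand, the construction is essentially forced and the verification is routine. The only point requiring a moment's care is the choice of $\lambda$, which must be non-zero (so that $\lambda I_N$ is invertible, making $B$ genuinely Drazin invertible and $A+B$ one-sided invertible) while simultaneously avoiding $-\sigma(A_N)$ (so that $A_N+\lambda I_N$ is invertible); compactness of $\sigma(A_N)$ makes both constraints trivially compatible.
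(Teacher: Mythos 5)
Your proof is correct, and after the common first step (the equivalence (i)$\Longleftrightarrow$(iii) of Theorem \ref{prva Meromorphic}, resp.\ Theorem \ref{druga Meromorphic}, producing $(M,N)\in Red(A)$ with $A_M$ left (right) invertible and $A_N$ meromorphic) it genuinely diverges from the paper in the choice of $B$: you take $B=0_M\oplus\lambda I_N$ with $\lambda\neq 0$ and $-\lambda\notin\sigma(A_N)$, whereas the paper takes $B=0_M\oplus(I_N-A_N)$. The paper's choice forces nontrivial work at every subsequent step: Drazin invertibility of $I_N-A_N$ is extracted from the meromorphy of $A_N$ and then split into nilpotent and invertible pieces via \cite{King} and \cite[Lemma 3.4.2]{CPY} (with Lemma \ref{zatvorenost} needed for the closedness of $M\oplus N_1$), and the meromorphy of $AB=0_M\oplus A_N(I_N-A_N)$ requires the spectral mapping theorem for the Drazin spectrum \cite[Corollary 2.4]{BS} applied to $f(\lambda)=\lambda-\lambda^2$. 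Your $B$ is just a nonzero scalar multiple of the projection onto $N$ along $M$, so all three verifications collapse: $a(B)\le 1$ and $d(B)\le 1$ give Drazin invertibility immediately; $AB=0_M\oplus\lambda A_N$ is meromorphic by Lemma \ref{Riesz} (ii) since a nonzero scalar multiple of a meromorphic operator is meromorphic (poles of the resolvent simply rescale); and $A+B=A_M\oplus(A_N+\lambda I_N)$ is one-sided invertible by Lemma \ref{suma left}, your spectral-radius condition on $\lambda$ making $A_N+\lambda I_N$ invertible. What your route buys is economy---no appeal to King's decomposition or to the Drazin spectral mapping theorem; what the paper's choice buys is the cleaner normal form $A+B=A_M\oplus I_N$, in which the meromorphic summand is absorbed entirely into the identity, but nothing in the statement requires this, so your simpler $B$ fully suffices (and the degenerate cases $M=\{0\}$ or $N=\{0\}$ are harmless, as you note).
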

\begin{proof} Suppose that    $A$ is left  generalized Drazin-meromorphic invertible. According to Theorem \ref{prva Meromorphic} 
 there exists a pair $(M,N)\in Red(A)$ such that $A_M$ is left invertible and $A_N$ is meromorphic. Then  $I_N-A_N$ is Drazin invertible and hence there exists a pair $(N_1,N_2)\in Red(I_N-A_N)$ such that $I_{N_1}-A_{N_1}$ is nilpotent and $I_{N_2}-A_{N_2}$ is invertible \cite[Theorem 4]{King}, \cite[Lemma 3.4.2]{CPY}.  Let $B=0_M\oplus (I_N-A_N)$. According Lemma \ref{zatvorenost} we have that $M\oplus N_1$ is closed, and so $(M\oplus N_1,N_2)\in Red(B)$. As  $B_{M\oplus N_1}=0_M\oplus (I_{N_1}-A_{N_1})$ is nilpotent and $B_{N_2}=I_{N_2}-A_{N_2}$ is invertible, it follows that  $B$ is Drazin invertible. Since $A+B=A_M\oplus I_N$, according  Lemma \ref{suma left} we get that $A+B$ is left invertible. Further we have that 
\begin{eqnarray}
     AB&=&(A_M\oplus A_N)(0_M\oplus (I_N-A_N))= 0_M\oplus A_N(I_N-A_N)\label{klj}\\&=&0_M\oplus (I_N-A_N)A_N=(0_M\oplus (I_N-A_N))(A_M\oplus A_N)=BA.\nonumber
\end{eqnarray}
 Since $A_N$ is meromorphic,  $\sigma_D(A_N)\subset\{0\}$. Using the  spectral mapping theorem  \cite[Corollary 2.4]{BS}  for $f(\lambda)=\lambda-\lambda^2$, $\lambda\in\CC$, we conclude  that $\sigma_D( A_N(I_N-A_N))=\sigma_D(f(A_N))=f(\sigma_D(A_N))\subset\{0\}$. Consequently, $A_N(I_N-A_N)$ is meromorphic, and from \eqref{klj} according Lemma \ref{Riesz} (ii) we obtain that $AB$ is meromorphic.
 
 The rest of assertion can be proved similarly.
\end{proof}


\begin{corollary} Let $A\in L(X)$. Then $A$ is  generalized Drazin-meromorphic invertible if and only if $A$ is both left and right generalized Drazin-meromorphic invertible.
\end{corollary}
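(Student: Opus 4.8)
The plan is to imitate the proof of Corollary \ref{get} verbatim, replacing the Browder data by the corresponding Drazin data. The implication $(\Longrightarrow)$ is immediate: a two-sided generalized Drazin-meromorphic inverse satisfies both one-sided defining systems at once, so $A$ is simultaneously left and right generalized Drazin-meromorphic invertible. For the converse I would start from the hypothesis and invoke the equivalence (i)$\Longleftrightarrow$(x) in Theorem \ref{prva Meromorphic} together with the equivalence (i)$\Longleftrightarrow$(x) in Theorem \ref{druga Meromorphic}; these tell me at once that $A$ admits a GS($\M$)D and that $0\notin\acc\,\sigma_{D}^l(A)$ and $0\notin\acc\,\sigma_{D}^r(A)$.

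The next step is the spectral identity $\sigma_{D}(A)=\sigma_{D}^l(A)\cup\sigma_{D}^r(A)$, the Drazin counterpart of the identity $\sigma_{\B}(A)=\sigma_{\B}^l(A)\cup\sigma_{\B}^r(A)$ used in Corollary \ref{get}. It follows from the fact that $A-\lambda I$ is Drazin invertible if and only if $a(A-\lambda I)<\infty$ and $d(A-\lambda I)<\infty$, combined with the observations that finite ascent is forced by left Drazin invertibility and finite descent by right Drazin invertibility; thus $A-\lambda I$ is Drazin invertible exactly when it is both left and right Drazin invertible, and taking complements yields the identity. Since the accumulation-point operator distributes over finite unions, $\acc\,\sigma_{D}(A)=\acc\,\sigma_{D}^l(A)\cup\acc\,\sigma_{D}^r(A)$, and hence $0\notin\acc\,\sigma_{D}(A)$.

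Finally I would observe that a Saphar operator is Kato, so any GS($\M$)D of $A$ is in particular a GK($\M$)D; thus $A$ admits a GK($\M$)D and satisfies $0\notin\acc\,\sigma_{D}(A)$. Feeding this into the meromorphic analogue of \cite[Theorem 2.3]{ZC} from \cite{ZD}, which characterizes generalized Drazin-meromorphic invertibility by the conjunction of a GK($\M$)D and the condition $0\notin\acc\,\sigma_{D}(A)$, yields that $A$ is generalized Drazin-meromorphic invertible. The main obstacle I anticipate is ensuring that the external characterization in \cite{ZD} is stated in exactly this GK($\M$)D-plus-$\acc\,\sigma_{D}$ form; if it is instead phrased through a different resolvent or decomposition condition, I would need to reconcile it with the GS($\M$)D produced above, but since Saphar-meromorphic is a strengthening of Kato-meromorphic this reconciliation should be routine.
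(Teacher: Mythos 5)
Your proposal is correct and follows essentially the same route as the paper: the paper's proof likewise deduces from Theorems \ref{prva Meromorphic} and \ref{druga Meromorphic} that $A$ admits a GS($\M$)D with $0\notin\acc\,\sigma_{D}^l(A)$ and $0\notin\acc\,\sigma_{D}^r(A)$, passes through $0\notin\acc\bigl(\sigma_{D}^l(A)\cup\sigma_{D}^r(A)\bigr)=\acc\,\sigma_{D}(A)$, and concludes by citing \cite[Theorem 5]{ZD}, which is exactly the GK($\M$)D-plus-$\acc\,\sigma_{D}$ characterization you anticipated (Saphar being Kato makes the reconciliation immediate, as you note). Your only deviation is to spell out the justification of $\sigma_{D}(A)=\sigma_{D}^l(A)\cup\sigma_{D}^r(A)$ via finite ascent and descent, a step the paper uses without comment.
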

\begin{proof} $(\Longrightarrow)$: It is obvious.

$(\Longleftarrow)$: Let $A$ be both left and right generalized Drazin-meromorphic invertible. Then from Theorems \ref{prva Meromorphic} and \ref{druga Meromorphic} it follows that $A$ admits a GS($\M$)D, and  $0\notin {\rm acc}\, \sigma_{D}^l(A)$ and $0\notin {\rm acc}\, \sigma_{D}^r(A)$. Hence
$0\notin {\rm acc}( \sigma_{D}^l(A)\cup\sigma_{D}^r(A))={\rm acc}\, \sigma_{D}(A)$, and according  to \cite[Theorem 5]{ZD}    we get  that $A$ is  generalized Drazin-meromorphic invertible.
\end{proof}

%

Similarly as in Remark \ref{pp} we conclude that every left (right) invertible operator  which is not right (left) invertible is left (right) generalized Drazin-meromorphic  invertible, but it is not right (left) generalized Drazin-meromorphic invertible. 


\begin{corollary} \label{cupu} For $A\in L(X)$ the following conditions are equivalent:

 \snoi {\rm (i)} $A$ is  generalized Drazin-meromorphic invertible;

\snoi {\rm (ii)} $A$ is either  left  generalized Drazin-meromorphic invertible or right generalized Drazin-meromorphic invertible, and $0\notin\,  {\rm int}\sigma(A)$;

\snoi {\rm (iii)} $A$ admits a GS($\M$)D and $0\notin \, {\rm int}\sigma(A)$.

\end{corollary}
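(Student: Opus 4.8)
The plan is to imitate the proof of Corollary \ref{cup} verbatim, replacing every Riesz notion by its meromorphic counterpart and letting \cite[Theorem 5]{ZD} play the role that \cite[Theorem 2.3]{ZC} plays there. The one structural fact I keep using is that a Saphar operator is by definition Kato, so that every GS($\M$)D is in particular a GK($\M$)D; this is exactly the analogue of ``GSRD $\Rightarrow$ GKRD'' used in the Riesz case.

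For (i)$\Longrightarrow$(ii) I would argue as follows. If $A$ is generalized Drazin-meromorphic invertible, then the corollary immediately preceding this one shows that $A$ is simultaneously left and right generalized Drazin-meromorphic invertible, and in particular it satisfies the ``either/or'' clause of (ii). It then remains only to produce the spectral condition $0\notin\inter\,\sigma(A)$, which I would read off from \cite[Theorem 5]{ZD}. For (ii)$\Longrightarrow$(iii) I would treat the two cases symmetrically: assuming, say, that $A$ is left generalized Drazin-meromorphic invertible, Theorem \ref{prva Meromorphic} (the equivalence of (i) with (viii)) already asserts that $A$ admits a GS($\M$)D, and the right case is handled identically through Theorem \ref{druga Meromorphic}; adjoining the standing hypothesis $0\notin\inter\,\sigma(A)$ gives (iii) with no further work.

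For (iii)$\Longrightarrow$(i) I would note that a GS($\M$)D is a GK($\M$)D, so $A$ admits a generalized Kato-meromorphic decomposition, and then invoke \cite[Theorem 5]{ZD} once more, this time in the direction ``GK($\M$)D together with $0\notin\inter\,\sigma(A)$ implies generalized Drazin-meromorphic invertibility''. Since all three implications are pure translations of the corresponding steps for the Riesz class, no new estimate is required.

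The only delicate point is matching the spectral hypothesis $0\notin\inter\,\sigma(A)$ to the exact form of \cite[Theorem 5]{ZD}: in the preceding corollary that theorem was used through the accumulation condition $0\notin\acc\,\sigma_{D}(A)$, whereas here I need its interior-point version. I would therefore cite the branch of \cite[Theorem 5]{ZD} that pairs a GK($\M$)D with $0\notin\inter\,\sigma(A)$, exactly parallel to the way \cite[Theorem 2.3]{ZC} is used in Corollary \ref{cup}. Should only the $\acc$-version be available, the same conclusion can be obtained by hand: the definition of generalized Drazin-meromorphic invertibility furnishes an idempotent $P=AB=BA$ commuting with $A$ for which $A_{\R(P)}$ is invertible and $A_{\N(P)}=(A(I-P))_{\N(P)}$ is meromorphic by Lemma \ref{Riesz} (ii); since $0\notin\sigma(A_{\R(P)})$ and the spectrum of a meromorphic operator is at most countable with $0$ as its only possible accumulation point, $\sigma(A)$ has empty interior near $0$, whence $0\notin\inter\,\sigma(A)$.
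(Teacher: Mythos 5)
Your proposal is correct and follows essentially the same route as the paper: the paper's proof is precisely the one-line appeal to Theorems \ref{prva Meromorphic} and \ref{druga Meromorphic} together with \cite[Theorem 5]{ZD}, which is exactly your translation of the argument for Corollary \ref{cup} with GS($\M$)D in place of GSRD and \cite[Theorem 5]{ZD} in place of \cite[Theorem 2.3]{ZC}. Your fallback derivation of $0\notin{\rm int}\,\sigma(A)$ from the spectral projection and the countability of the spectrum of a meromorphic operator is sound but unnecessary, since \cite[Theorem 5]{ZD} plays the same role here that \cite[Theorem 2.3]{ZC} plays in the Riesz case.
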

\begin{proof} It follows  from Theorems \ref{prva Meromorphic} and \ref{druga Meromorphic}, and \cite[Theorem 5]{ZD}.
\end{proof}
\begin{corollary} Let $A\in L(X)$. If  $0\notin \, {\rm int}\sigma(A)$, then $A$ admits a GK($\M$)D if and only if $A$ admits a GS($\M$)D.
\end{corollary}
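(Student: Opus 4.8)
The plan is to mirror the proof of the preceding (Riesz) corollary, obtaining the equivalence by passing through the notion of generalized Drazin-meromorphic invertibility. The two ingredients are Corollary \ref{cupu}, which ties the generalized Saphar-meromorphic decomposition to generalized Drazin-meromorphic invertibility under the standing spectral hypothesis, and the corresponding characterization of the latter via the generalized Kato-meromorphic decomposition, namely \cite[Theorem 5]{ZD}.

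First I would record the easy, hypothesis-free implication. If $A$ admits a GS($\M$)D$(M,N)$, then $A_M$ is Saphar and hence Kato (a Saphar operator is by definition Kato with complemented kernel and range), while $A_N$ is meromorphic; thus the very same pair $(M,N)\in Red(A)$ exhibits a GK($\M$)D. No assumption on $\sigma(A)$ is needed here.

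For the converse I would invoke the hypothesis $0\notin {\rm int}\,\sigma(A)$. By \cite[Theorem 5]{ZD}, an operator admitting a GK($\M$)D with $0\notin {\rm int}\,\sigma(A)$ is generalized Drazin-meromorphic invertible. On the other hand, the equivalence (i)$\Longleftrightarrow$(iii) of Corollary \ref{cupu} shows that a generalized Drazin-meromorphic invertible operator admits a GS($\M$)D. Chaining these two facts gives that a GK($\M$)D together with $0\notin {\rm int}\,\sigma(A)$ forces a GS($\M$)D, which, combined with the easy direction, completes the equivalence under the standing hypothesis.

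The only real obstacle is to confirm that \cite[Theorem 5]{ZD} indeed furnishes the characterization ``GK($\M$)D and $0\notin {\rm int}\,\sigma(A)$ $\Longleftrightarrow$ generalized Drazin-meromorphic invertible'' in precisely the form needed; this is the meromorphic analogue of the role played by \cite[Theorem 2.3]{ZC} in the Riesz corollary, and everything else is a formal chaining of equivalences.
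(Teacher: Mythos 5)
Your proposal is correct and takes essentially the same route as the paper, whose entire proof reads ``It follows from Corollary \ref{cupu} and \cite[Theorem 5]{ZD}'': your easy direction (Saphar implies Kato, so a GS($\M$)D is a GK($\M$)D) and your converse (GK($\M$)D plus $0\notin{\rm int}\,\sigma(A)$ gives generalized Drazin-meromorphic invertibility by \cite[Theorem 5]{ZD}, which yields a GS($\M$)D by the equivalence (i)$\Longleftrightarrow$(iii) of Corollary \ref{cupu}) are exactly the intended chaining. Your one flagged concern is unproblematic: \cite[Theorem 5]{ZD} is used throughout the paper precisely as the meromorphic analogue of \cite[Theorem 2.3]{ZC}, mirroring its role in the preceding Riesz corollary.
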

\begin{proof} It follows  from Corollary \ref{cupu} and \cite[Theorem 5]{ZD}.
\end{proof}

\smallskip

Since the adjoint of a meromorphic operator is also meromorphic the following assertions are proved in a similar way as appropriate assertions in Section \ref{left Riesz section}.
\begin{proposition}\label{treca Meromorphic} Let $A\in L(X)$. Then:

\snoi {\rm (i)} If there exists $(M,N)\in Red(A)$ such that $ A_M$ is left invertible   and $ A_N$ is meromorphic, then $(N^\bot, M^\bot)\in Red (A^\prime  )$, ${A^\prime}_{N^\bot}$ is right invertible and ${A^\prime}_{M^\bot}$ is meromorphic.

\snoi {\rm (ii)} If there exists $(M,N)\in Red(A)$ such that $ A_M$ is right invertible   and $ A_N$ is meromorphic, then $(N^\bot, M^\bot)\in Red (A^\prime  )$, ${A^\prime}_{N^\bot}$ is left invertible and ${A^\prime}_{M^\bot}$ is meromorphic.
\end{proposition}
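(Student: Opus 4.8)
The plan is to mirror the proof of Proposition \ref{treca Riesz} essentially verbatim, isolating the single place where the Riesz hypothesis entered and replacing it by the corresponding meromorphic statement. First I would let $P$ be the projector of $X$ with $\R(P)=M$ and $\N(P)=N$; then $P^\prime$ is a projector commuting with $A^\prime$ with $\R(P^\prime)=N^\bot$ and $\N(P^\prime)=M^\bot$, which gives $(N^\bot,M^\bot)\in Red(A^\prime)$ in both cases.

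For the invertibility conclusions on $A^\prime_{N^\bot}$ I would observe that the corresponding arguments in Proposition \ref{treca Riesz} never used the Riesz hypothesis on $A_N$: they rested only on the left (resp.\ right) invertibility of $A_M$ together with the complementation lemmas. Thus in case (i) the same chain of identities, namely $\R(A^\prime_{N^\bot})=(\N(A_M)\oplus N)^\bot=N^\bot$ and $\N(A^\prime_{N^\bot})=(\R(A_M)\oplus N)^\bot$, shows that $A^\prime_{N^\bot}$ is surjective, and since $\R(A_M)$ is complemented in $M$ one deduces via Lemmas \ref{glavna lema}, \ref{bot} and \ref{complemented in} that $\N(A^\prime_{N^\bot})$ is complemented in $N^\bot$, whence $A^\prime_{N^\bot}$ is right invertible. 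In case (ii) the dual computation gives $A^\prime_{N^\bot}$ injective with complemented range, i.e.\ left invertible. These steps may be quoted directly from the proof of Proposition \ref{treca Riesz}.

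The only genuine modification concerns $A^\prime_{M^\bot}$. Setting $Q=I-P$, I have $(M,N)\in Red(AQ)$ and $AQ=0_M\oplus A_N$, so Lemma \ref{Riesz} (ii) gives that $AQ$ is meromorphic because $A_N$ is. Taking adjoints, $A^\prime Q^\prime=Q^\prime A^\prime=(AQ)^\prime$ is meromorphic, by the fact recalled just before the statement that the adjoint of a meromorphic operator is meromorphic. Since $\R(Q^\prime)=\N(Q)^\bot=M^\bot$ and $\N(Q^\prime)=N^\bot$, we have $(N^\bot,M^\bot)\in Red(A^\prime Q^\prime)$ with $(A^\prime Q^\prime)_{M^\bot}=A^\prime_{M^\bot}$, and a final application of Lemma \ref{Riesz} (ii) yields that $A^\prime_{M^\bot}$ is meromorphic.

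I expect no real obstacle here: the whole content is a transcription of Proposition \ref{treca Riesz} in which ``Riesz'' is replaced by ``meromorphic.'' The one substantive input beyond that proof is the stability of the meromorphic class under passing to adjoints, which replaces the appeal to \cite[Corollary 3.10]{SANU} used for Riesz operators; the stability of meromorphicity under restriction to a completely reducing summand is supplied by Lemma \ref{Riesz} (ii). Accordingly, the argument can be written very briefly once these two facts are invoked.
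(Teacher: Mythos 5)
Your proposal is correct and takes essentially the same route as the paper: the paper itself proves Proposition \ref{treca Meromorphic} only by remarking that, since the adjoint of a meromorphic operator is meromorphic, the assertions follow as in Proposition \ref{treca Riesz}, and your write-up is exactly that adaptation made explicit. In particular, you correctly identify that the annihilator computations and Lemmas \ref{glavna lema}, \ref{bot}, \ref{complemented in} handling $A^\prime_{N^\bot}$ carry over unchanged, while for $A^\prime_{M^\bot}$ the appeal to \cite[Corollary 3.10]{SANU} is replaced by the stability of meromorphicity under adjoints together with Lemma \ref{Riesz} (ii).
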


\begin{corollary} Let $A\in L(X)$. Then:

\snoi {\rm (i)} If $A$ is left generalized Drazin-meromorphic invertible,  then $A^\prime$ is  right generalized Drazin-meromorphic invertible.

\snoi {\rm (ii)} If  $A$ is right  generalized Drazin-meromorphic invertible, then $A^\prime$ is left   generalized Drazin-meromorphic invertible.
\end{corollary}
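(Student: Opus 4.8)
The plan is to deduce the statement purely formally from the decomposition characterizations already established, exactly parallel to the corresponding corollary in the Riesz setting. The key inputs are the equivalence (i)$\Longleftrightarrow$(iii) in Theorem \ref{prva Meromorphic} and in Theorem \ref{druga Meromorphic}, together with the duality contained in Proposition \ref{treca Meromorphic}.

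For part (i), I would first apply the equivalence (i)$\Longleftrightarrow$(iii) of Theorem \ref{prva Meromorphic} to the hypothesis that $A$ is left generalized Drazin-meromorphic invertible, obtaining a pair $(M,N)\in Red(A)$ such that $A_M$ is left invertible and $A_N$ is meromorphic. Then Proposition \ref{treca Meromorphic} (i) supplies the dual decomposition $(N^\bot,M^\bot)\in Red(A^\prime)$ in which ${A^\prime}_{N^\bot}$ is right invertible and ${A^\prime}_{M^\bot}$ is meromorphic. Reading the equivalence (iii)$\Longrightarrow$(i) of Theorem \ref{druga Meromorphic} for the operator $A^\prime$ then yields that $A^\prime$ is right generalized Drazin-meromorphic invertible.

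Part (ii) is handled by the symmetric chain: apply the equivalence (i)$\Longleftrightarrow$(iii) of Theorem \ref{druga Meromorphic} to realize $A$ as $A_M\oplus A_N$ with $A_M$ right invertible and $A_N$ meromorphic, pass to the adjoint via Proposition \ref{treca Meromorphic} (ii) to obtain $(N^\bot,M^\bot)\in Red(A^\prime)$ with ${A^\prime}_{N^\bot}$ left invertible and ${A^\prime}_{M^\bot}$ meromorphic, and finally invoke the equivalence (iii)$\Longrightarrow$(i) of Theorem \ref{prva Meromorphic} to conclude that $A^\prime$ is left generalized Drazin-meromorphic invertible.

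Since all the analytic content is already packaged into Proposition \ref{treca Meromorphic}, there is no genuine obstacle here; the proof is a bookkeeping composition of three already-available equivalences. The only point requiring a moment's care is the matching of sides: left invertibility of $A_M$ dualizes to right invertibility of ${A^\prime}_{N^\bot}$ and vice versa, so the conclusion genuinely interchanges ``left'' and ``right'', which is exactly what the statement asserts. Everything else — the meromorphicity of the dual summand and the complementation of the relevant subspaces — is guaranteed by Proposition \ref{treca Meromorphic} together with the fact that the adjoint of a meromorphic operator is again meromorphic.
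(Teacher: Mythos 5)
Your proof is correct and follows exactly the route the paper intends: the paper proves the Riesz-case analogue from Proposition \ref{treca Riesz} together with the equivalence (i)$\Longleftrightarrow$(iii) of Theorems \ref{prva Riesz} and \ref{druga Riesz}, and states that the meromorphic case is proved in the same way, which is precisely your composition of Proposition \ref{treca Meromorphic} with the equivalence (i)$\Longleftrightarrow$(iii) of Theorems \ref{prva Meromorphic} and \ref{druga Meromorphic}.
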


\begin{proposition}\label{GSD+} Let $A\in L(X)$.  If $A$ admits a GS($\M$)D$(M, N)$, then $A^\prime$  admits a \break
GS($\M$)D$(N^{\bot}, M^{\bot})$.
\end{proposition}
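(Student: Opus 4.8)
The plan is to mirror the proof of Proposition \ref{GSD} verbatim, substituting ``meromorphic'' for ``Riesz'' throughout and invoking Proposition \ref{treca Meromorphic} in place of Proposition \ref{treca Riesz}. The only structural feature of the Riesz argument that is actually used is that the summand $A_N$ is sent to something of the \emph{same class} under taking adjoints; since the adjoint of a meromorphic operator is again meromorphic (as noted just before Proposition \ref{treca Meromorphic}), the whole machinery transfers.

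Concretely, I would unpack the hypothesis as $(M,N)\in Red(A)$ with $A_M$ Saphar and $A_N$ meromorphic. First I would reproduce the adjoint bookkeeping from the proof of Proposition \ref{treca Meromorphic}: letting $P$ be the projector with $\R(P)=M$, $\N(P)=N$, the dual projector $P^\prime$ commutes with $A^\prime$ and satisfies $\R(P^\prime)=N^\bot$, $\N(P^\prime)=M^\bot$, so $(N^\bot,M^\bot)\in Red(A^\prime)$. The same computation yields $A^\prime_{M^\bot}$ meromorphic (here using that the adjoint of the meromorphic operator $A_N$ is meromorphic, via Lemma \ref{Riesz}~(ii)), together with the annihilator identities $\N(A^\prime_{N^\bot})=(\R(A_M)\oplus N)^\bot$ and $\R(A^\prime_{N^\bot})=(\N(A_M)\oplus N)^\bot$, both of which are complemented in $N^\bot$ by the chain Lemma \ref{glavna lema}, Lemma \ref{bot}, Lemma \ref{complemented in} applied to the Saphar summand $A_M$.

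The decisive step is then to upgrade $A^\prime_{N^\bot}$ from ``Kato with complemented kernel and range'' to Saphar. Since $A_M$ is Saphar it is in particular Kato, and by the duality for Kato operators contained in the proof of \cite[Theorem 1.43]{Ai}, $A^\prime_{N^\bot}$ is Kato as well; combined with the two complementation facts just established, this gives that $A^\prime_{N^\bot}$ is Saphar. Together with $A^\prime_{M^\bot}$ meromorphic and $(N^\bot,M^\bot)\in Red(A^\prime)$, this is exactly the assertion that $A^\prime$ admits a GS($\M$)D$(N^\bot,M^\bot)$.

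I expect no genuine obstacle, precisely because the construction isolates the Saphar (equivalently Kato-plus-complementation) content into the $N^\bot$ summand, where nothing about the Riesz versus meromorphic distinction enters, while the class-dependent content lives entirely in the $M^\bot$ summand and is handled by the stability of the meromorphic class under adjoints. The only point deserving care is verifying that the annihilator identities and the complementation transfer are applied to the correct summand; this is routine once the decomposition $(N^\bot,M^\bot)$ is set up as above, and needs no new ideas beyond those already used for Proposition \ref{treca Meromorphic}.
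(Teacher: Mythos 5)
Your proposal is correct and takes essentially the same route as the paper, which proves Proposition~\ref{GSD+} precisely by repeating the proof of Proposition~\ref{GSD} with ``meromorphic'' in place of ``Riesz'', using the remark that the adjoint of a meromorphic operator is meromorphic; the decomposition $(N^\bot,M^\bot)$, the annihilator identities, the complementation chain (Lemmas~\ref{glavna lema}, \ref{bot}, \ref{complemented in}), and the appeal to the proof of \cite[Theorem 1.43]{Ai} for the Kato duality all match. One small citation point: the meromorphicity of $A^\prime_{M^\bot}$ comes from adjoint-stability of the meromorphic class combined with the restriction statement of Lemma~\ref{Riesz}~(ii), not from Lemma~\ref{Riesz}~(ii) alone, as your earlier sentence in fact states correctly.
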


 If for an operator $A\in L(X)$ there exists a pair $(M,N)\in Red(A)$ such that $A_M$ is bounded bellow (resp. surjective) and $A_N$ is Riesz, we say that $A$ is  {\em upper  generalized Drazin-Riesz invertible} (resp., {\it lower generalized Drazin-Riesz invertible}). Such kind of  operators was considered in  \cite[Theorems 2.4 and 2.5]{ZC}.  Furthermore, if there exists a pair $(M,N)\in Red(A)$ such that $A_M$ is bounded bellow (resp. surjective) and $A_N$ is meromorphic, then we  say that $A$ is  {\em upper  generalized Drazin-meromorphic invertible} (resp., {\it lower generalized Drazin-meromorphic  invertible}) (see \cite[Theorems 6 and 7]{ZD}).
On a Hilbert space the class of upper (resp., lower) generalized Drazin-Riesz invertible operators coincides with the class of left (resp., right)  generalized Drazin-Riesz invertible operators, and  the class of upper (resp., lower) generalized Drazin-meromorphic invertible operators coincides with the class of left (resp., right)  generalized Drazin-meromorphic  invertible operators.

In order to prove that it does not hold on an arbitrary Banach space we need the  following theorem that is proved in \cite{GZ}.

\begin{theorem}\label{nova}\cite[Theorem 3.1]{GZ} Let $A\in L(X)$ is an upper (resp. lower) Drazin invertible operator. Then the following conditions are equivalent:


  \snoi {\rm (i)}
 $ A$ is of Saphar type;

\snoi {\rm (ii)}  $ A $  admits a  GSRD;

\snoi {\rm (iii)}
$ A$  admits a  GS$\M$D.

\end{theorem}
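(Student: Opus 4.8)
The three conditions split into two routine implications and one substantial one. For (i)$\Longrightarrow$(ii) I would only observe that a nilpotent operator is Riesz, so a pair witnessing Saphar type is already a GSRD; and for (ii)$\Longrightarrow$(iii) that every Riesz operator is meromorphic (\cite[Corollary 3.1]{SANU}), so a GSRD is a GS$\M$D. All the work sits in (iii)$\Longrightarrow$(i). I would treat the upper case and deduce the lower case by duality: since the adjoint of a meromorphic operator is meromorphic and $\R(A^{d(A)})=\R(A^{d(A)+1})$ when $d(A)<\infty$, the operator $A$ is lower Drazin invertible exactly when $A^\prime$ is upper Drazin invertible; meanwhile Propositions \ref{GSD} and \ref{GSD+} transfer GSRD and GS$\M$D to the adjoint, and Saphar type is self-dual (by the dualization of reducing decompositions used in Proposition \ref{treca Riesz}). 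Hence the lower statement for $A$ is precisely the upper statement for $A^\prime$.

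So assume $A$ is upper Drazin invertible and admits a GS$\M$D$(M,N)$, that is $(M,N)\in Red(A)$ with $A_M$ Saphar and $A_N$ meromorphic. By Lemma \ref{Saphar type} it is enough to prove that each summand is of Saphar type, and $A_M$ is Saphar, hence trivially of Saphar type; so the entire problem reduces to showing that the meromorphic summand $A_N$ is of Saphar type. First I would transport the hypothesis to the summand: from $a(A)=\max\{a(A_M),a(A_N)\}<\infty$ and, via Lemma \ref{zatvorenost}, from the closedness of $\R(A^{a(A)+1})=\R(A_M^{a(A)+1})\oplus\R(A_N^{a(A)+1})$, one reads off that $A_N$ is itself upper Drazin invertible (invoking the standard propagation of range-closedness for operators of finite ascent, cf. \cite{Grabiner}).

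The heart of the argument is to show that $0$ is isolated in $\sigma(A_N)$ (or lies in the resolvent set). Since $A_N$ is upper Drazin invertible, the operator induced by $A_N$ on the closed range $\R(A_N^{m})$ for $m\ge a(A_N)$ is bounded below, so Grabiner's punctured neighbourhood theorem \cite[Theorem 4.7]{Grabiner} furnishes an $\epsilon>0$ such that $A_N-\lambda I$ is bounded below for every $\lambda$ with $0<|\lambda|<\epsilon$. Now meromorphy enters: any $\lambda$ in this punctured disc lying in $\sigma(A_N)$ would be a pole of the resolvent, hence would have strictly positive ascent, contradicting the injectivity of the bounded below operator $A_N-\lambda I$. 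Therefore the punctured disc misses $\sigma(A_N)$ and $0$ is isolated. By Koliha's characterization \cite{K}, $A_N$ is then generalized Drazin invertible, so $A_N=A_{N^\prime}\oplus A_{N^{\prime\prime}}$ with $A_{N^\prime}$ quasinilpotent and $A_{N^{\prime\prime}}$ invertible. The summand $A_{N^\prime}$ inherits upper Drazin invertibility (again by Lemma \ref{zatvorenost}, since $A_{N^{\prime\prime}}$ is invertible), so its restriction to the range of a suitable power is at once bounded below and quasinilpotent; this forces that range to be trivial, whence $A_{N^\prime}$ is nilpotent. Thus $A_N$ is a direct sum of an invertible (hence Saphar) operator and a nilpotent operator, so $A_N$ is of Saphar type by definition, and feeding this back through Lemma \ref{Saphar type} shows that $A$ is of Saphar type, completing (iii)$\Longrightarrow$(i).

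The step I expect to be most delicate is the passage ``upper Drazin invertible $+$ meromorphic $\Longrightarrow 0$ isolated'': it couples the punctured neighbourhood theorem with the pole structure of a meromorphic operator, and it relies on the bookkeeping that the closed-range condition genuinely descends to the summands $A_N$ and $A_{N^\prime}$, the non-trivial point being that finite ascent lets one pass from closedness of a single power of the range to the powers actually used. Once $0$ is known to be isolated the remainder is routine: the ``quasinilpotent plus bounded below implies trivial'' observation promotes the quasinilpotent part to a nilpotent one, and the definition of Saphar type closes the argument.
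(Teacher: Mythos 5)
Your routine implications and your treatment of the upper case are correct, but for (iii)$\Longrightarrow$(i) you take a genuinely different route from the paper. The paper argues globally: meromorphy of $A_N$ makes $A_N-\lambda I_N$ Drazin (hence upper Drazin) invertible for every $\lambda\ne 0$, restriction to the summand makes $A_N$ itself upper Drazin invertible, so the upper Drazin spectrum of $A_N$ is empty; it then invokes \cite[Corollary 3.15]{Q. Jiang} (components of the topological-uniform-descent resolvent set) to conclude that the Drazin spectrum of $A_N$ is empty, i.e.\ $A_N$ is Drazin invertible, and it finishes exactly as you do, gluing the invertible part onto $A_M$ via Lemma \ref{Riesz} (iii). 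Your local argument --- Grabiner's punctured neighbourhood theorem giving bounded-belowness of $A_N-\lambda I_N$ for $0<|\lambda|<\epsilon$, the fact that poles of the resolvent are eigenvalues forcing $0$ to be isolated in $\sigma(A_N)$, Koliha's decomposition, and the upgrade ``quasinilpotent $+$ upper Drazin invertible $\Longrightarrow$ nilpotent'' --- reaches the same Drazin decomposition of $A_N$ without Jiang's components theorem, at the cost of the extra nilpotency step, which you settle correctly (the restriction of $A_{N^\prime}$ to $\R(A_{N^\prime}^{a+1})$ is bounded below and quasinilpotent, and $\partial\sigma$ meets the approximate point spectrum, so that range is trivial). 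Both routes rely on the same bookkeeping that upper Drazin invertibility passes to direct summands and that closedness of $\R(A_N^{a(A)+1})$ propagates down to the power $a(A_N)+1$; the paper asserts this, you cite Grabiner's uniform-descent theory, and either is acceptable. What Jiang's corollary buys the paper is brevity; what your argument buys is self-containedness, since the punctured neighbourhood theorem is already in the paper's toolkit (it is used in the proof of Theorem \ref{prva Riesz}).

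The genuine gap is your duality reduction of the lower case. Propositions \ref{treca Riesz}, \ref{GSD} and \ref{GSD+} transfer decompositions only \emph{forward}, from $A$ to $A^\prime$; your plan needs the backward step ``$A^\prime$ of Saphar type $\Longrightarrow A$ of Saphar type'', and neither the paper nor your sketch provides it. This is precisely the direction that fails in general: complementedness of a subspace of $X^\prime$ does not yield complementedness of the corresponding subspace of $X$, and the closed reducing subspaces produced for $A^\prime$ need not be weak-$*$ closed, i.e.\ need not be annihilators of subspaces of $X$ at all --- the same phenomenon behind Burlando's example in Remark \ref{nakon}, where $A$ is bounded below yet not of Saphar type. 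So ``the lower statement for $A$ is precisely the upper statement for $A^\prime$'' is unjustified as it stands. The repair is cheap and is what the paper means by ``proved similarly'': run your argument symmetrically in $A$ itself. Finite descent makes the stable values $\beta_n$ vanish, so Grabiner's theorem yields surjectivity of $A_N-\lambda I_N$ on a punctured disc; a pole of the resolvent has positive descent, hence is not a point of surjectivity, so $0$ is isolated; and a quasinilpotent lower Drazin invertible operator $Q$ is nilpotent, because its restriction to the closed subspace $\R(Q^{d})$, $d=d(Q)$, is surjective and quasinilpotent, while $\partial\sigma$ meets the surjectivity spectrum, forcing $\R(Q^{d})=\{0\}$.
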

\begin{proof} (i)$\Longrightarrow$(ii)$\Longrightarrow$(iii) It is clear.

(iii)$\Longrightarrow$(ii) Suppose that $ T $  admits a  generalized Saphar-meromorphic decomposition. Then there exists a pair $(M,N)\in Red(T)$ such that $T_M$ is Saphar and $T_N$ is meromorphic. It follows that $T_N-\lambda I_N$ is Drazin invertible for every $\lambda\in\CC$, $\lambda\ne 0$, and hence $T_N-\lambda I_N$ is upper Drazin invertible for every $\lambda\ne 0$. Since $T$ is upper Drazin invertible, we have  that $T_N$ is upper Drazin invertible. Therefore, $\{\lambda\in\CC:T_N-\lambda I_N\ {\rm is\ not\ upper\ Drazin\ invertible}\}=\emptyset$, which according to \cite[Corollary 3.15]{Q. Jiang} implies that the Drazin spectrum of $T_N$ is empty. Thus $T_N$ is Drazin invertible, and hence there  exists a pair $(N_1,N_2)\in Red(T_N)$ such that $T_{N_1}$ is invertible  and $T_{N_2}$ is nilpotent. Applying Lemma \ref{Riesz} (iii) we obtain that $T_M\oplus T_{N_1}$ is Saphar. Consequently, $T$ is of Saphar type.

The assertion for the case when $A$ is a lower  Drazin invertible operator  can be proved similarly.
\end{proof}


\begin{remark}\label{nakon} \rm  L. Burlando proved that  on a  Banach space $X$ there  exists  an operator $A\in L(X)$ which  is bounded below (resp., surjective) but its  range (resp., null-space) is not complemented \cite[Example 1.1]{Burlando}. Clearly this operator is upper generalized Drazin-Riesz invertible and upper generalized Drazin-meromorphic  invertible  (resp., lower generalized Drazin-Riesz invertible and lower generalized Drazin-meromorphic  invertible), but it is  not of Saphar type (see \cite[p. 170]{ZS}). Hence  from Theorem \ref{nova} it follows that $A$ admits neither  a generalized Saphar-Riesz decomposition nor a generalized Saphar-meromorphic decomposition. Now according to Theorems \ref{prva Riesz} and  \ref{prva Meromorphic} (resp., Theorems \ref{druga Riesz} and  \ref{druga Meromorphic})  it follows that $A$ is neither left generalized Drazin-Riesz invertible nor left generalized Drazin-meromorphic  invertible (resp., neither right generalized Drazin-Riesz invertible nor right generalized Drazin-meromorphic  invertible).
\end{remark}


\section{Spectra}

In this section we introduce and study spectra corresponding to operators and decompositions introduced in the previous two sections.

For $A\in L(X)$, set

\begin{eqnarray*}
 \sigma_{ g SR}(A)  &=& \{\lambda \in \mathbb{C}:A-\lambda  I \mbox{ does\ not\ admit\ a}\ GSRD\} ,\\
  \sigma_{ g S\M}(A)  &=&  \{\lambda \in \mathbb{C}:A-\lambda  I \mbox{ does\ not\ admit\ a}\ GS(\M) D\} .
\end{eqnarray*}

The  left generalized   Drazin spectrum, the  right generalized   Drazin spectrum,  the  left generalized   Drazin-Riesz   spectrum, the  right generalized   Drazin-Riesz spectrum, the left generalized   Drazin-meromorphic   spectrum and  the  right generalized   Drazin-meromorphic spectrum
  of $A\in L(X)$ are denoted by $\sigma_{gD}^l(A)$, $\sigma_{gD}^r(A)$, $\sigma_{gDR}^l(A)$, $\sigma_{gDR}^r(A)$, $\sigma_{gD\M}^l(A)$ and $\sigma_{gD\M}^r(A)$,
 respectively.

\begin{theorem}\label{closed}
Let $A\in L(X)$. Then:

\snoi {\rm (i)} If  $A$  admits a GSRD, then there exists $\epsilon>0$ such that $A-\lambda I$ is essentially  Saphar  for each $\lambda$ such that  $0<|\lambda|<\epsilon$;

\snoi {\rm (ii)} If  $A$  admits a GS$(\M)$D, then there exists $\epsilon>0$ such that $A-\lambda I$ is of   Saphar type  for each $\lambda$ such that  $0<|\lambda|<\epsilon$;
\end{theorem}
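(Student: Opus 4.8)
The plan is to exploit the reduction pair $(M,N)\in Red(A)$ furnished by the decomposition and to treat the Saphar summand $A_M$ and the Riesz (resp.\ meromorphic) summand $A_N$ separately, using that for every $\lambda$ one has $(M,N)\in Red(A-\lambda I)$ with $(A-\lambda I)_M=A_M-\lambda I_M$ and $(A-\lambda I)_N=A_N-\lambda I_N$. The radius $\epsilon$ will come entirely from the Saphar part, while the $N$-part will behave well for \emph{all} $\lambda\neq 0$.

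For the Saphar summand I would use the well-known fact that the set of Saphar operators is open, equivalently that the Saphar spectrum $\sigma_S(A_M)$ is closed (cf.\ \cite{Mu}): since $A_M$ is Saphar we have $0\notin\sigma_S(A_M)$, so there is $\epsilon>0$ with $A_M-\lambda I_M$ Saphar for all $|\lambda|<\epsilon$. This stability of the Saphar class under small perturbations is the main obstacle — it is the only place where a genuinely analytic (rather than purely algebraic) fact enters, everything else being a gluing argument.

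For part (i), I would then observe that because $A_N$ is Riesz, $A_N-\lambda I_N$ is Browder for every $\lambda\neq 0$ (by \cite[Theorem 3.2]{SANU}), so no further restriction on $\epsilon$ is forced by the $N$-side. Invoking the structure of Browder operators (\cite[Theorem 2.32]{SANU}) yields a pair $(N_1,N_2)\in Red(A_N-\lambda I_N)$ with $\dim N_2<\infty$, $(A_N-\lambda I_N)_{N_1}$ invertible and $(A_N-\lambda I_N)_{N_2}$ nilpotent. By Lemma \ref{zatvorenost} the subspace $M\oplus N_1$ is closed, hence $(M\oplus N_1,N_2)\in Red(A-\lambda I)$; since $(A-\lambda I)_{M\oplus N_1}=(A_M-\lambda I_M)\oplus(A_N-\lambda I_N)_{N_1}$ is a direct sum of two Saphar operators, it is Saphar by Lemma \ref{Riesz}(iii), while $(A-\lambda I)_{N_2}$ is nilpotent with $\dim N_2<\infty$. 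This is exactly the decomposition characterizing essentially Saphar operators (\cite[Theorem 2.1]{voki}, \cite[Lemma 2.1]{IEOT}), so $A-\lambda I$ is essentially Saphar for $0<|\lambda|<\epsilon$.

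Part (ii) runs along the same lines, with ``Riesz'' replaced by ``meromorphic'' and ``Browder'' by ``Drazin invertible'': since $A_N$ is meromorphic, $A_N-\lambda I_N$ is Drazin invertible for every $\lambda\neq 0$, and the decomposition of a Drazin invertible operator (\cite[Theorem 4]{King}) produces $(N_1,N_2)\in Red(A_N-\lambda I_N)$ with $(A_N-\lambda I_N)_{N_1}$ invertible and $(A_N-\lambda I_N)_{N_2}$ nilpotent (now $N_2$ need not be finite dimensional). Gluing exactly as before, $(A-\lambda I)_{M\oplus N_1}$ is Saphar and $(A-\lambda I)_{N_2}$ is nilpotent, which is precisely the definition of $A-\lambda I$ being of Saphar type. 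The only point to watch is that the nilpotency degree of the $N_2$-part may depend on $\lambda$, but this is irrelevant for the conclusion as stated.
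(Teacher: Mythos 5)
Your proposal is correct and follows essentially the same route as the paper's proof: the radius $\epsilon$ comes solely from the stability of the Saphar class applied to $A_M$ (the paper cites \cite[Corollary 12.4 and Lemma 13.6]{Mu} for exactly this), while for every $\lambda\neq 0$ the Riesz (resp.\ meromorphic) summand is handled via Browder (resp.\ Drazin) invertibility of $A_N-\lambda I_N$, and the pieces are glued using Lemma \ref{zatvorenost} and Lemma \ref{Riesz}~(iii). Your explicit Browder/Drazin decompositions $(N_1,N_2)$ of the $N$-part simply unpack the paper's terser step (that $A_N-\lambda I_N$ is Fredholm, hence essentially Saphar, via \cite[Theorem 3.2]{SANU} and \cite[Theorem 16.12]{Mu}), so there is no substantive difference.
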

\begin{proof} There exists  $(M,N)\in Red(A)$, such that $A=A_M\oplus A_N$, $A_M$ is Saphar and $A_N$ is Riesz.
If $M=\{0\}$, then $A$ is Riesz. According to \cite[Theorem 3.2]{SANU} we have that  $A-\lambda I$ is  Fredholm   for all $\lambda\ne 0$. It follows  that $\R(A-\lambda I)$ and $\N(A-\lambda I)$ are complemented subspaces of $X$. According  to \cite[Theorem 16.12]{Mu} we  have that $A-\lambda I$ is  essentially Kato operator for all $\lambda\ne 0$. Hence  $A-\lambda I$ is an essentially Saphar operator for all $\lambda\ne 0$.

Let $M\ne \{0\}$. According  to \cite[Corollary 12.4 and  Lemma 13.6]{Mu} we have  that there exists an $\epsilon>0$ such that   for $|\lambda|<\epsilon$, $A_M-\lambda I_M$ is Saphar. As $A_N$ is Riesz,  $A_N-\lambda I_N$ is  Fredholm, and hence it is essentially Saphar  for all $\lambda\ne 0$. From Lemma \ref{Riesz} (iii) and Lemma \ref{zatvorenost} we obtain that $A-\lambda I$ is essentially  Saphar for each $\lambda$ such that  $0<|\lambda|<\epsilon$.

(ii) It can be proved in a similar way as the part (i), by using the fact that if $A$ $(A_N)$ is meromorphic, then $A-\lambda I$ ($A_N-\lambda I_N$) is Drazin invertible for every $\lambda\ne 0$.
\end{proof}

\begin{corollary} \label{cor1}
Let $A\in L(X)$. Then

\snoi {\rm (i)} $\sigma_{gSR}(A)$ and $\sigma_{gS\M}(A)$ are compact;

\snoi {\rm (ii)}    $\sigma_{eS}(A)\setminus\sigma_{gSR}(A)\subset {\rm iso}\, \sigma_{eS}(A)$,

\  $\sigma_{St}(A)\setminus\sigma_{gS\M}(A)\subset {\rm iso}\, \sigma_{St}(A)$.

\end{corollary}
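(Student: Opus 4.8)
The plan is to read both assertions off Theorem \ref{closed}, after recording two elementary facts that bridge the two notions Theorem \ref{closed} mixes. Namely, every essentially Saphar operator admits a GSRD, because such an operator decomposes as $A_M\oplus A_N$ with $A_M$ Saphar and $A_N$ nilpotent on a finite-dimensional space, and any operator on a finite-dimensional space is Riesz; likewise every operator of Saphar type admits a GS$(\M)$D, since a nilpotent operator has spectrum $\{0\}$ and is therefore vacuously meromorphic. In particular $\sigma_{gSR}(A)\subset\sigma_{eS}(A)$ and $\sigma_{gS\M}(A)\subset\sigma_{St}(A)$.

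For (i) I would argue boundedness and closedness separately. Boundedness is immediate: for $|\lambda|$ large $A-\lambda I$ is invertible, hence Saphar, hence it admits both a GSRD and a GS$(\M)$D with $M=X$ and $N=\{0\}$. For closedness I show the complement of $\sigma_{gSR}(A)$ is open. Fixing $\lambda_0\notin\sigma_{gSR}(A)$, the operator $A-\lambda_0 I$ admits a GSRD, so Theorem \ref{closed}(i) provides $\epsilon>0$ such that $A-\lambda I$ is essentially Saphar, and hence admits a GSRD by the bridging fact above, for all $\lambda$ with $0<|\lambda-\lambda_0|<\epsilon$; together with $\lambda_0\notin\sigma_{gSR}(A)$ this puts the whole disc $|\lambda-\lambda_0|<\epsilon$ in the complement. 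Thus $\sigma_{gSR}(A)$ is compact, and the same argument using Theorem \ref{closed}(ii) and Saphar type gives compactness of $\sigma_{gS\M}(A)$.

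For (ii) I would take $\lambda_0\in\sigma_{eS}(A)\setminus\sigma_{gSR}(A)$. Since $\lambda_0\notin\sigma_{gSR}(A)$, the operator $A-\lambda_0 I$ admits a GSRD, so Theorem \ref{closed}(i) yields $\epsilon>0$ with $A-\lambda I$ essentially Saphar, i.e. $\lambda\notin\sigma_{eS}(A)$, for every $\lambda$ satisfying $0<|\lambda-\lambda_0|<\epsilon$. Hence the punctured disc about $\lambda_0$ misses $\sigma_{eS}(A)$ while $\lambda_0\in\sigma_{eS}(A)$, which is exactly $\lambda_0\in{\rm iso}\,\sigma_{eS}(A)$; the second inclusion follows verbatim from Theorem \ref{closed}(ii) with $\sigma_{St}$ and $\sigma_{gS\M}$ in place of $\sigma_{eS}$ and $\sigma_{gSR}$. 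There is no genuine obstacle here: the analytic content sits entirely in Theorem \ref{closed}, and the only point needing care is the bookkeeping that essentially Saphar implies a GSRD and Saphar type implies a GS$(\M)$D, which is what lets the essentially-Saphar (resp. Saphar-type) conclusion of Theorem \ref{closed} be converted back into the GSRD (resp. GS$(\M)$D) statement defining $\sigma_{gSR}$ (resp. $\sigma_{gS\M}$).
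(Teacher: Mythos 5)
Your proof is correct and follows the paper's route exactly: the paper's own proof of this corollary is just the single line ``It follows from Theorem \ref{closed}'', and your write-up supplies precisely the intended details, including the bridging facts (essentially Saphar $\Rightarrow$ GSRD via the decomposition into Saphar $\oplus$ finite-dimensional nilpotent, and Saphar type $\Rightarrow$ GS$(\M)$D since nilpotent operators are meromorphic) that make the punctured-disc conclusions of Theorem \ref{closed} yield openness of the complements and isolation of the points in $\sigma_{eS}(A)$ and $\sigma_{St}(A)$.
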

\begin{proof} 
It follows from  Theorem \ref{closed}.

\end{proof}

\begin{corollary}\label{cor-F}  For  $A\in L(X)$ and each $*=l,r$ the following  holds:

\snoi {\rm (i)} 
 \begin{eqnarray}
   \sigma_{gDR}^*(A)&=&\sigma_{gSR}(A)\cup\inter\, \sigma_{*}(A) \label{sljiva0} \\
   &=&\sigma_{gSR}(A)\cup\acc\, \sigma_{\B}^*(A) \label{sljiva1}
 \end{eqnarray}

  and

  \begin{eqnarray}
   \sigma_{gD\M }^*(A)&=&\sigma_{gS\M }(A)\cup\inter\, \sigma_{*}(A) \label{sljiva0+}\\
   &=&\sigma_{gS\M}(A)\cup\acc\, \sigma_{D}^*(A) \label{sljiva1+}
 \end{eqnarray}

\snoi {\rm (ii)} \quad $\sigma_{gD\M}^*(A)\subset\sigma_{gDR}^*(A)\subset\sigma_{*}(A)$;

\smallskip

\snoi {\rm (iii)} \quad $\sigma_{gDR}^*(A)$ and $\sigma_{gD\M}^*(A)$ are compact;

\smallskip

 \snoi {\rm (iv)} \quad
$\inter\, \sigma_{gD\M}^*(A) = \inter\, \sigma_{gDR}^*(A)=\inter\, \sigma_{gD}^*(A)=\inter\, \sigma_{D}^*(A)=\inter\, \sigma_{\B}^*(A) = \inter\, \sigma_{*}(A)$;

\smallskip

\snoi {\rm (v)} \quad $\p\sigma_{gD\M}^*(A)\subset\p\, \sigma_{gDR}^*(A)\subset \p\sigma_{gD}^*(A)\subset\p\sigma_{D}^*(A)\subset\p\sigma_{\B}^*(A)\subset\p \, \sigma_{*}(A)$,

\smallskip

\snoi {\rm (vi)} \quad $\sigma_{\B}^*(A)\setminus \sigma_{gDR}^*(A)=(\iso\, \sigma_{\B}^*(A))\setminus\sigma_{gSR}(A)$,

\ \quad  $\sigma_{D}^*(A)\setminus \sigma_{gD\M}^*(A)=(\iso\, \sigma_{D}^*(A))\setminus\sigma_{gS\M}(A)$;

\smallskip

 \snoi {\rm (vii)} \quad $\sigma_{\B}^*(A)\setminus \sigma_{gDR}^*(A)$ and $\sigma_{D}^*(A)\setminus \sigma_{gD\M}^*(A)$ are  at most countable.
\end{corollary}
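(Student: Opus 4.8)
The plan is to deduce everything from the spectral reformulations of Theorems~\ref{prva Riesz}, \ref{druga Riesz}, \ref{prva Meromorphic} and \ref{druga Meromorphic}; once part (i) is in hand, parts (ii)--(vii) reduce to elementary point-set topology. For (i), fix $*\in\{l,r\}$ and use that the spectra involved are translation covariant, so that $0\notin\inter\,\sigma_{*}(A-\lambda I)$ iff $\lambda\notin\inter\,\sigma_{*}(A)$, and likewise for $\acc\,\sigma_{\B}^{*}$ and $\acc\,\sigma_{D}^{*}$. Now $\lambda\notin\sigma_{gDR}^{*}(A)$ means that $A-\lambda I$ is $*$-generalized Drazin-Riesz invertible, which by the equivalences (i)$\Leftrightarrow$(viii) and (i)$\Leftrightarrow$(x) of Theorem~\ref{prva Riesz} (for $*=l$) and Theorem~\ref{druga Riesz} (for $*=r$) is equivalent to ``$A-\lambda I$ admits a GSRD and $\lambda\notin\inter\,\sigma_{*}(A)$'', respectively ``$A-\lambda I$ admits a GSRD and $\lambda\notin\acc\,\sigma_{\B}^{*}(A)$'', where I used that $\lambda\notin\sigma_{gSR}(A)$ is by definition the assertion that $A-\lambda I$ admits a GSRD. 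Passing to complements gives \eqref{sljiva0} and \eqref{sljiva1}, and the same argument applied to the analogous equivalences in Theorems~\ref{prva Meromorphic} and \ref{druga Meromorphic} gives \eqref{sljiva0+} and \eqref{sljiva1+}.

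For (ii), the inclusion $\sigma_{gD\M}^{*}(A)\subset\sigma_{gDR}^{*}(A)$ holds because every $*$-generalized Drazin-Riesz invertible operator is $*$-generalized Drazin-meromorphic invertible, while $\sigma_{gDR}^{*}(A)\subset\sigma_{*}(A)$ follows from \eqref{sljiva0} together with $\sigma_{gSR}(A)\subset\sigma_{*}(A)$ (a left/right invertible operator is Saphar, hence admits a GSRD with trivial Riesz part). For (iii) I would invoke \eqref{sljiva1} and \eqref{sljiva1+}: $\sigma_{gSR}(A)$ and $\sigma_{gS\M}(A)$ are compact by Corollary~\ref{cor1}(i), while $\acc\,\sigma_{\B}^{*}(A)$ and $\acc\,\sigma_{D}^{*}(A)$, being derived sets of bounded sets, are closed and bounded; thus each of $\sigma_{gDR}^{*}(A)$ and $\sigma_{gD\M}^{*}(A)$ is a union of two compacta.

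For (iv) I would first record the chain
\[
\sigma_{gD\M}^{*}(A)\subset\sigma_{gDR}^{*}(A)\subset\sigma_{gD}^{*}(A)\subset\sigma_{D}^{*}(A)\subset\sigma_{\B}^{*}(A)\subset\sigma_{*}(A),
\]
in which the first inclusion is (ii), the second is Remark~\ref{ne mogu}, and the last three are the standard implications left/right invertible $\Rightarrow$ left/right Browder $\Rightarrow$ left/right Drazin invertible $\Rightarrow$ left/right generalized Drazin invertible. Taking interiors yields the corresponding chain of six interiors; to close it, \eqref{sljiva0+} gives $\inter\,\sigma_{*}(A)\subset\sigma_{gD\M}^{*}(A)$, and since $\inter\,\sigma_{*}(A)$ is open it is contained in $\inter\,\sigma_{gD\M}^{*}(A)$, forcing all six interiors to coincide. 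Part (v) is the one step where mere set inclusion is not enough (the naive ``larger set, larger boundary'' is false); here I would use that every set in the chain is closed (the first two by (iii), the rest by standard facts), so that $\p K=K\setminus\inter K$, and then combine this with the equality of interiors from (iv): for consecutive $S\subset T$ one has $\p S=S\setminus\inter S=S\setminus\inter T\subset T\setminus\inter T=\p T$, and chaining these gives (v).

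Finally, for (vi) I would compute, using \eqref{sljiva1} and the identity $K\setminus\acc\,K=\iso\,K$,
\[
\sigma_{\B}^{*}(A)\setminus\sigma_{gDR}^{*}(A)=\sigma_{\B}^{*}(A)\setminus\bigl(\sigma_{gSR}(A)\cup\acc\,\sigma_{\B}^{*}(A)\bigr)=\bigl(\iso\,\sigma_{\B}^{*}(A)\bigr)\setminus\sigma_{gSR}(A),
\]
the second equality of (vi) being obtained identically from \eqref{sljiva1+}. Then (vii) is immediate: the two differences are contained in $\iso\,\sigma_{\B}^{*}(A)$ and $\iso\,\sigma_{D}^{*}(A)$ respectively, and the set of isolated points of any subset of $\CC$ is at most countable. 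The only genuine subtlety is the boundary statement (v); everything else is bookkeeping resting on the translation argument of part (i).
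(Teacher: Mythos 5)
Your proposal is correct and takes essentially the same route as the paper: part (i) via the translated spectral characterizations (viii) and (x) of Theorems \ref{prva Riesz}, \ref{druga Riesz}, \ref{prva Meromorphic} and \ref{druga Meromorphic}, and parts (ii)--(vii) by the same topological bookkeeping (compactness from \eqref{sljiva1} and \eqref{sljiva1+} with Corollary \ref{cor1}(i), equal interiors forced by \eqref{sljiva0+} together with the inclusion chain, boundaries via $\p K=K\setminus\inter K$ for closed $K$, and (vi)--(vii) from $K\setminus\acc\, K=\iso\, K$). The only difference is that you make explicit details the paper leaves tacit, notably the closedness of the intermediate spectra needed in (v).
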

\begin{proof} (i): It follows from Theorem \ref{prva Riesz} and Theorem \ref{druga Riesz}.

(ii):  It is clear.

(iii): It follows from \eqref{sljiva1},
 \eqref{sljiva1+}  and Corollary \ref{cor1} (i).

(iv):
From
\eqref{sljiva0+} we have  that $\inter\,  \sigma_{*}(A)\subset \sigma_{gD\M}^*(A)$, and hence $\inter\,  \sigma_{*}(A)\subset \inter\, \sigma_{gD\M}^*(A)$. From (ii) it follows that
$ \inter\, \sigma_{gD\M}^*(A)\subset \inter\,  \sigma_{*}(A)$. Hence
\begin{equation*}
 \inter\, \sigma_{gD\M}^*(A)= \inter\,  \sigma_{*}(A),
  \end{equation*}
  which together with the inclusions
\begin{equation*}
 \sigma_{gD\M}^*(A)\subset\sigma_{gDR}^*(A)\subset \sigma_{gD}^*(A)\subset\sigma_{D}^*(A)\subset\sigma_{\B}^*(A)\subset\sigma_{*}(A)
\end{equation*}
gives the desired equalities.

(v): From (ii) and (iv) it follows that
 $$\partial  \sigma_{gD\M}^*(A)\subset \sigma_{gD\M}^*(A)\setminus \inter \, \sigma_{gD\M}^*(A)\subset \sigma_{gDR}^*(A)\setminus \inter \, \sigma_{gDR}^*(A)
= \partial  \sigma_{gDR}^*(A).$$

 The rest of  inclusions can be  proved analogously.

(vi): It follows from \eqref{sljiva1} and \eqref{sljiva1+} .

(vii): It follows from (vi).
\end{proof}

\begin{corollary}\label{SVEP}
\snoi {\rm (i)} Let $A\in L(X)$ have the SVEP. Then $\acc \, \sigma_{\B}^l(A)\subset \sigma_{gSR}(A)$ and
 $\acc\, \sigma_{D}^l(A)\subset \sigma_{gS\M}(A)$.

\snoi {\rm (ii)} For $A\in L(X)$ let  $A^\prime$ have the SVEP. Then $\acc\, \sigma_{ \B}^r(A)\subset \sigma_{gSR}(A)$ and $\acc\, \sigma_{D}^r(A)\subset\sigma_{gS\M}(A)$.

\end{corollary}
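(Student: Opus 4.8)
The plan is to prove each of the four inclusions by contraposition, relying on the characterizations of one-sided generalized Drazin-Riesz and Drazin-meromorphic invertibility established in Theorems~\ref{prva Riesz}, \ref{druga Riesz}, \ref{prva Meromorphic} and \ref{druga Meromorphic}, together with the standard fact that the relevant spectra translate under shifts, i.e. $\sigma_{\B}^l(A-\lambda I)=\sigma_{\B}^l(A)-\lambda$ and likewise for $\sigma_D^l$, $\sigma_{\B}^r$, $\sigma_D^r$. The decisive observation is that if $A$ has the SVEP globally, then $A$ has the SVEP at the point $\lambda$ for every $\lambda\in\CC$, which is the same as saying that $A-\lambda I$ has the SVEP at $0$; the analogous statement holds for $A^\prime$, since $(A-\lambda I)^\prime=A^\prime-\lambda I$.

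For part (i), first inclusion, I would fix $\lambda\notin\sigma_{gSR}(A)$ and deduce $\lambda\notin\acc\,\sigma_{\B}^l(A)$. By definition $A-\lambda I$ admits a GSRD, and since $A$ has the SVEP, $A-\lambda I$ has the SVEP at $0$. These are precisely the two hypotheses of condition (ix) of Theorem~\ref{prva Riesz} applied to $A-\lambda I$, so that theorem gives that $A-\lambda I$ is left generalized Drazin-Riesz invertible. Reading off condition (x) of the same theorem then yields $0\notin\acc\,\sigma_{\B}^l(A-\lambda I)=\acc\,\sigma_{\B}^l(A)-\lambda$, that is $\lambda\notin\acc\,\sigma_{\B}^l(A)$; hence $\acc\,\sigma_{\B}^l(A)\subset\sigma_{gSR}(A)$. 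The second inclusion of (i) is obtained identically after replacing GSRD by GS($\M$)D, $\sigma_{\B}^l$ by $\sigma_D^l$, and Theorem~\ref{prva Riesz} by Theorem~\ref{prva Meromorphic}, whose equivalent conditions (ix) and (x) play the same roles.

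Part (ii) follows the same scheme under the dual SVEP hypothesis. Assuming $A^\prime$ has the SVEP, I would use that $(A-\lambda I)^\prime$ has the SVEP at $0$ for every $\lambda$. Then for $\lambda\notin\sigma_{gSR}(A)$ the fact that $A-\lambda I$ admits a GSRD together with the SVEP of $(A-\lambda I)^\prime$ at $0$ are exactly the hypotheses of condition (ix) of Theorem~\ref{druga Riesz}; that theorem yields right generalized Drazin-Riesz invertibility of $A-\lambda I$ and hence, via its condition (x), $\lambda\notin\acc\,\sigma_{\B}^r(A)$, giving $\acc\,\sigma_{\B}^r(A)\subset\sigma_{gSR}(A)$. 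The meromorphic inclusion $\acc\,\sigma_D^r(A)\subset\sigma_{gS\M}(A)$ is handled by the corresponding conditions of Theorem~\ref{druga Meromorphic}.

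The argument is in essence a repackaging of the four characterization theorems, so there is no real analytic obstacle to overcome. The only point requiring care is the bookkeeping between global SVEP and SVEP at a single point under the shift $A\mapsto A-\lambda I$, together with the correct translation of each spectrum under that shift; once these are recorded, each of the four inclusions reduces to a one-line contrapositive application of the appropriate theorem.
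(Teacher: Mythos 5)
Your proof is correct and is essentially the paper's own argument: the paper proves the corollary by invoking exactly the equivalence (ix)$\Longleftrightarrow$(x) in Theorems \ref{prva Riesz}, \ref{druga Riesz}, \ref{prva Meromorphic} and \ref{druga Meromorphic}, which is what you do, merely spelling out the contrapositive and the (routine) shift bookkeeping $\sigma_{\B}^l(A-\lambda I)=\sigma_{\B}^l(A)-\lambda$ and SVEP of $A$ at $\lambda$ versus SVEP of $A-\lambda I$ at $0$ that the paper leaves implicit.
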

\begin{proof} (i): It follows from the equivalence (ix)$\Longleftrightarrow$(x) in Theorem \ref{prva Riesz} and Theorem \ref{prva Meromorphic}.

(ii): It follows from the equivalence (ix)$\Longleftrightarrow$(x) in Theorem \ref{druga Riesz} and Theorem \ref{druga Meromorphic}.
\end{proof}

\begin{theorem}\label{partial} For  $A\in L(X)$ and  each $*=l,r$
there are inclusions
\begin{equation}\label{poss-0}
 \partial\sigma_{*}(A)\cap\acc\, \sigma_{\B}^*(A)\subset \p\sigma_{gSR}(A)
\end{equation}
and
\begin{equation}\label{poss-0+}
 \partial\sigma_{*}(A)\cap\acc\, \sigma_{D}^*(A)\subset \p\sigma_{gS\M}(A) .
\end{equation}
\end{theorem}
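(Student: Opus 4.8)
The plan is to prove both inclusions by one and the same set-theoretic argument, feeding in the two complementary descriptions of the one-sided generalized Drazin--Riesz (resp. Drazin--meromorphic) spectrum supplied by Corollary \ref{cor-F} (i); no new analytic input is required. I would fix $*\in\{l,r\}$ and take a point $\lambda_0\in\partial\sigma_{*}(A)\cap\acc\,\sigma_{\B}^*(A)$, and then establish $\lambda_0\in\partial\sigma_{gSR}(A)$ by proving separately that $\lambda_0\in\sigma_{gSR}(A)$ and that $\lambda_0\notin\inter\,\sigma_{gSR}(A)$.

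The decisive step is the membership $\lambda_0\in\sigma_{gSR}(A)$, and here I would play the two formulas of Corollary \ref{cor-F} (i) against each other. Since $\lambda_0\in\acc\,\sigma_{\B}^*(A)$, the description \eqref{sljiva1}, namely $\sigma_{gDR}^*(A)=\sigma_{gSR}(A)\cup\acc\,\sigma_{\B}^*(A)$, already places $\lambda_0$ in $\sigma_{gDR}^*(A)$. On the other hand $\lambda_0\in\partial\sigma_{*}(A)$ forces $\lambda_0\notin\inter\,\sigma_{*}(A)$, because interior and boundary are disjoint; hence in the alternative description \eqref{sljiva0}, namely $\sigma_{gDR}^*(A)=\sigma_{gSR}(A)\cup\inter\,\sigma_{*}(A)$, the point $\lambda_0$ cannot be contributed by the second set and must lie in $\sigma_{gSR}(A)$. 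This is precisely the place where the boundary hypothesis, rather than mere membership in $\sigma_{*}(A)$, is used.

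For the second step I would invoke the inclusion $\sigma_{gSR}(A)\subset\sigma_{gDR}^*(A)\subset\sigma_{*}(A)$, which follows from \eqref{sljiva0} together with Corollary \ref{cor-F} (ii). This gives $\CC\setminus\sigma_{*}(A)\subset\CC\setminus\sigma_{gSR}(A)$. Being a boundary point of $\sigma_{*}(A)$, the point $\lambda_0$ lies in the closure of the resolvent set $\CC\setminus\sigma_{*}(A)$, hence in the closure of $\CC\setminus\sigma_{gSR}(A)$; therefore $\lambda_0\notin\inter\,\sigma_{gSR}(A)$. Combining the two steps yields $\lambda_0\in\sigma_{gSR}(A)\setminus\inter\,\sigma_{gSR}(A)=\partial\sigma_{gSR}(A)$, which is the inclusion \eqref{poss-0}.

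The meromorphic inclusion \eqref{poss-0+} then follows verbatim after replacing $\sigma_{\B}^*$, $\sigma_{gDR}^*$, $\sigma_{gSR}$ throughout by $\sigma_{D}^*$, $\sigma_{gD\M}^*$, $\sigma_{gS\M}$ respectively, using \eqref{sljiva1+} and \eqref{sljiva0+} in place of \eqref{sljiva1} and \eqref{sljiva0}, and the inclusion $\sigma_{gS\M}(A)\subset\sigma_{gD\M}^*(A)\subset\sigma_{*}(A)$ from \eqref{sljiva0+} and Corollary \ref{cor-F} (ii). I do not anticipate a genuine obstacle here: once Corollary \ref{cor-F} is available, the statement is pure spectral bookkeeping. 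The only point that demands care is the decisive step, where one must be sure that $\lambda_0\notin\inter\,\sigma_{*}(A)$ is exactly what removes $\lambda_0$ from the $\inter\,\sigma_{*}(A)$ component of $\sigma_{gDR}^*(A)$ and thereby pins it into $\sigma_{gSR}(A)$.
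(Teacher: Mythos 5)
Your proposal is correct and follows essentially the same route as the paper: the paper obtains $\lambda\in\sigma_{gSR}(A)$ via the contrapositive of the equivalence (viii)$\Longleftrightarrow$(x) in Theorem \ref{prva Riesz}, which is precisely the content of the two formulas \eqref{sljiva0} and \eqref{sljiva1} of Corollary \ref{cor-F} (i) that you play off against each other. For the boundary step the paper exhibits a sequence $\lambda_n\to\lambda$ with $A-\lambda_n I$ left invertible, hence admitting a GSRD and so $\lambda_n\notin\sigma_{gSR}(A)$, which is just the sequential formulation of your observation that $\lambda\in\overline{\CC\setminus\sigma_{*}(A)}$ together with $\sigma_{gSR}(A)\subset\sigma_{*}(A)$ forces $\lambda\notin\inter\,\sigma_{gSR}(A)$.
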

\begin{proof}
Suppose that $A-\lambda I$ admit a GSRD and  $\lambda\in \partial \, \sigma_{l}(A)$. Then $\lambda\notin \inter\, \sigma_{l}(A)$ and from Theorem \ref{prva Riesz} it follows   that $\lambda\notin \acc\, \sigma_{\B}^l(A)$. Hence
\begin{equation}\label{presek2}
  \partial \, \sigma_{l}(A)\cap \acc\, \sigma_{\B}^l(A)\subset \sigma_{gSR}(A).
\end{equation}
 Let $\lambda\in \partial \, \sigma_{l}(A)\cap \acc\, \sigma_{\B}^l(A)$.  Then  there exists a sequence $ (\lambda_n)$ which converges to $\lambda $ and  such that $A- \lambda_n$ is left inverible for every $n\in\NN$. Hence $A- \lambda_n$ admits a GSRD, and so $\lambda_n\notin \sigma_{gSR}(A)$ for every $n\in\NN$. As from \eqref{presek2} we have that $\lambda\in \sigma_{gSR}(A)$, it follows  that $\lambda\in \partial\, \sigma_{gSR}(A)$. This proves  the  inclusion \eqref{poss-0} for the case $*=l$.
  The rest of inclusions can be proved similarly by using Theorems \ref{druga Riesz}, \ref{prva Meromorphic} and \ref{druga Meromorphic}.
\end{proof}

If $K\subset\CC$ is compact, then
for $\lambda\in\partial K$  the following  equivalence holds:
\begin{equation}\label{zz}
 \lambda\in \acc\, K\Longleftrightarrow
\lambda\in \acc\, \partial K.
\end{equation}

\begin{corollary}\label{zad} For  $A\in L(X)$ and each $*=l,r$,
   if  $$\sigma_{*}(A)=\partial\sigma (A)\subset \acc\,\sigma (A) ,$$
    then
\begin{equation}\label{z}
  \sigma_{gS\M}(A)=\sigma_{gSR}(A)=\sigma_{gS}(A)= \sigma_{St}=\sigma_{eS}(A)=\sigma_{S}(A)=\sigma_{*}(A)
\end{equation}
and
\begin{equation}\label{z1}
  \sigma_{gD\M}^*(A)=\sigma_{gDR}^*(A)=\sigma_{*}(A).
\end{equation}

\end{corollary}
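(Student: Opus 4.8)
The plan is to collapse the entire chain in \eqref{z} onto $\sigma_*(A)$ by establishing the single non-formal inclusion $\sigma_*(A)\subseteq\sigma_{gS\M}(A)$, the smallest of the spectra involved. First I would record the elementary chain valid for \emph{every} $A$: a left (resp.\ right) invertible operator is Saphar, and for the second summand of the respective decompositions one has the implications finite-rank-nilpotent $\Rightarrow$ nilpotent $\Rightarrow$ quasinilpotent $\Rightarrow$ Riesz $\Rightarrow$ meromorphic; hence
$\sigma_{gS\M}(A)\subseteq\sigma_{gSR}(A)\subseteq\sigma_{gS}(A)\subseteq\sigma_{St}(A)\subseteq\sigma_{eS}(A)\subseteq\sigma_S(A)\subseteq\sigma_*(A)$.
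Thus \eqref{z} follows once $\sigma_*(A)\subseteq\sigma_{gS\M}(A)$ is proved. For \eqref{z1} I would note that the hypothesis $\sigma_*(A)=\partial\sigma(A)$ forces $\inter\,\sigma_*(A)=\emptyset$ (the boundary of any closed set has empty interior), so \eqref{sljiva0} and \eqref{sljiva0+} of Corollary \ref{cor-F} give $\sigma_{gDR}^*(A)=\sigma_{gSR}(A)$ and $\sigma_{gD\M}^*(A)=\sigma_{gS\M}(A)$; combined with \eqref{z} this is exactly \eqref{z1}.

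The geometric heart is a perfectness statement: under $\partial\sigma(A)\subseteq\acc\,\sigma(A)$ the compact set $\partial\sigma(A)$ has no isolated points. I would argue by contradiction. If $\lambda_0$ were isolated in $\partial\sigma(A)$, choose a ball $V$ with $V\cap\partial\sigma(A)=\{\lambda_0\}$; then the connected set $V\setminus\{\lambda_0\}$ is the disjoint union of the two open sets $V\cap\inter\,\sigma(A)$ and $V\cap(\CC\setminus\sigma(A))$, so one of them is empty. If the first is empty then $V\cap\sigma(A)=\{\lambda_0\}$, making $\lambda_0$ isolated in $\sigma(A)$ and contradicting $\lambda_0\in\partial\sigma(A)\subseteq\acc\,\sigma(A)$; if the second is empty then $V\subseteq\sigma(A)$, putting $\lambda_0\in\inter\,\sigma(A)$ and contradicting $\lambda_0\in\partial\sigma(A)$. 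Hence $\sigma_*(A)=\partial\sigma(A)=\acc\,\sigma_*(A)$, i.e.\ $\sigma_*(A)$ is perfect.

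Next I would feed this into the one-sided Drazin spectrum. Since a left (resp.\ right) Drazin invertible operator splits, by \cite[Theorem 3.4]{Ghorbel Mnif}, as the direct sum of a left (resp.\ right) invertible operator and a nilpotent one, a punctured-neighborhood argument (stability of left/right invertibility under small perturbation, invertibility of $N-\mu I$ for a nilpotent $N$ and $\mu\neq0$, and Lemma \ref{suma left}) shows that $A-\nu I$ is left (resp.\ right) invertible on a punctured disc about any $\mu\notin\sigma_D^*(A)$; therefore $\acc\,\sigma_*(A)\subseteq\sigma_D^*(A)\subseteq\sigma_*(A)$. Perfectness then forces $\sigma_D^*(A)=\sigma_*(A)$ and $\acc\,\sigma_D^*(A)=\sigma_*(A)$. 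Substituting $\acc\,\sigma_D^*(A)=\sigma_*(A)$ and $\sigma_{gD\M}^*(A)=\sigma_{gS\M}(A)$ into \eqref{sljiva1+}, namely $\sigma_{gD\M}^*(A)=\sigma_{gS\M}(A)\cup\acc\,\sigma_D^*(A)$, gives $\sigma_{gS\M}(A)=\sigma_{gS\M}(A)\cup\sigma_*(A)$, that is $\sigma_*(A)\subseteq\sigma_{gS\M}(A)$, which closes the chain and yields \eqref{z} and hence \eqref{z1}.

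The hard part is precisely the perfectness of $\partial\sigma(A)$ and the resulting identity $\acc\,\sigma_D^*(A)=\sigma_*(A)$; everything else is bookkeeping with the already-established equalities \eqref{sljiva0}--\eqref{sljiva1+}. A subtlety to keep in mind is that the hypothesis controls only $\sigma_*(A)$ and $\partial\sigma(A)$, not $\sigma(A)$, so all accumulation arguments must be routed through $\acc\,\sigma(A)$ by way of $\partial\sigma(A)\subseteq\acc\,\sigma(A)$; the perfectness lemma is exactly the device that converts accumulation in $\sigma(A)$ into accumulation in $\partial\sigma(A)=\sigma_*(A)$. The cases $*=l$ and $*=r$ are treated identically, the only change being the use of $\sigma_l$, $\sigma_D^l$ versus $\sigma_r$, $\sigma_D^r$.
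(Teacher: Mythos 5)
Your proposal is correct, and it reaches the key inclusion $\sigma_{*}(A)\subseteq\sigma_{gS\M}(A)$ by a genuinely different route than the paper. The paper first upgrades the hypothesis to $\sigma_{*}(A)=\partial\sigma_{*}(A)$ via \eqref{spec.1}, invokes the stated equivalence \eqref{zz} for perfectness, cites \cite[Corollary 5.5 (i)]{ZS} for $\acc\,\sigma_{*}(A)\subseteq\sigma_{D}^{*}(A)\subseteq\sigma_{*}(A)$, and then applies Theorem \ref{partial}, whose sequence argument gives $\sigma_{*}(A)=\partial\sigma_{*}(A)\cap\acc\,\sigma_{D}^{*}(A)\subseteq\sigma_{gS\M}(A)$. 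You instead observe that $\sigma_{*}(A)=\partial\sigma(A)$ has empty interior (boundary of a closed set), so the two representations \eqref{sljiva0+} and \eqref{sljiva1+} of $\sigma_{gD\M}^{*}(A)$ in Corollary \ref{cor-F} (i) collapse to $\sigma_{gS\M}(A)=\sigma_{gS\M}(A)\cup\acc\,\sigma_{D}^{*}(A)$, which with $\acc\,\sigma_{D}^{*}(A)=\sigma_{*}(A)$ immediately yields the inclusion --- this bypasses Theorem \ref{partial} and the passage to $\partial\sigma_{*}(A)$ entirely, at the cost of nothing, since Corollary \ref{cor-F} rests on the same Theorems \ref{prva Meromorphic} and \ref{druga Meromorphic} that underlie Theorem \ref{partial}. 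You also make the argument more self-contained in two places: your punctured-disc connectedness argument is a correct from-scratch proof of the relevant direction of \eqref{zz}, and your punctured-neighborhood argument (splitting a left/right Drazin invertible operator as left/right invertible plus nilpotent, perturbing each summand, and reassembling via Lemma \ref{suma left}) is a correct proof of the cited inclusion $\acc\,\sigma_{*}(A)\subseteq\sigma_{D}^{*}(A)\subseteq\sigma_{*}(A)$; note only that for the right-handed case the splitting is \cite[Theorem 3.8]{Ghorbel Mnif} rather than Theorem 3.4, and that $\sigma_{D}^{*}(A)\subseteq\sigma_{*}(A)$ uses that a left (right) invertible operator is left (right) Drazin invertible, which you should state explicitly. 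Your derivation of \eqref{z1} from $\inter\,\sigma_{*}(A)=\emptyset$ and \eqref{sljiva0}, \eqref{sljiva0+} agrees with the paper's appeal to Corollary \ref{cor-F} (i).
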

\begin{proof}
From $\sigma_{*}(A)=\partial\sigma (A)$  we get  that
 $\sigma_{*}(A)=\partial\sigma (A)\subset \partial\sigma_{*}(A)\subset\sigma_{*}(A) $, and so $\sigma_{*}(A)=\partial\sigma^* (A)$. According to  (\ref{zz})
we conclude  that every $\lambda\in \partial\sigma (A)$ is an accumulation point of
 $\partial\sigma (A)=\sigma_{*}(A)$. Thus $\sigma_{*}(A)=\partial\sigma_{*} (A)=\acc\, \sigma_{*}(A)$. From \cite[Corollary 5.5 (i)]{ZS} it follows that
  $\acc\, \sigma_{*}(A)\subset  \sigma_{D}^*(A)\subset\sigma_{*}(A)$, and so $\sigma_{*}(A)=\partial\sigma_{*}(A)=\acc\, \sigma_{D}^*(A)$.  Using
Theorem \ref{partial} we conclude  that
$$
\sigma_{*}(A)=\p\sigma_{*}(A)\cap \acc\, \sigma_{D}^*(A)\subset  \sigma_{gS\M}(A).
$$
As $\sigma_{gS\M}(A)\subset \sigma_{*}(A)$, we obtain that $\sigma_{gS\M}(A)=\sigma_{*}(A)$, which together with the inclusions
$$
\sigma_{gS\M}(A)\subset \sigma_{gSR}(A)\subset \sigma_{gS}(A)\subset  \sigma_{St}\subset \sigma_{eS}(A)\subset \sigma_{S}(A)\subset\sigma_{*}(A)
$$
 gives
\eqref{z}. Corollary \ref{cor-F} (i) and \eqref{z} imply  \eqref{z1}.
\end{proof}
\begin{theorem}\label{rubS} Let $A\in L(X)$. Then 
the following  inclusions hold:

\noindent {\rm (i)}

{\footnotesize
$$\begin{array}{ccccccccccc}
&&&&  \partial\,  \sigma_{gDR}^l(A) & & \\
&&& \rotatebox{20}{$\subset$} & & \rotatebox{-20}{$\subset$}&\\
  & &  \partial\, \sigma_{gDR}(A)
   & &&  &\partial\,  \sigma_{gSR}(A)\\
&&& \rotatebox{-20}{$\subset$} & &\rotatebox{20}{$\subset$}&\\
& &&&   \partial\, \sigma_{gDR}^r(A)&  & &\\
\end{array}$$
}

and

{\footnotesize
$$\begin{array}{ccccccccccc}
&&&&  \partial\,  \sigma_{gD\M}^l(A) & & \\
&&& \rotatebox{20}{$\subset$} & & \rotatebox{-20}{$\subset$}&\\
  & &  \partial\, \sigma_{gD\M}(A)
   & &&  &\partial\,  \sigma_{gS\M}(A);\\
&&& \rotatebox{-20}{$\subset$} & &\rotatebox{20}{$\subset$}&\\
& &&&   \partial\, \sigma_{gD\M}^r(A)&  & &\\
\end{array}$$
}


\snoi {\rm (ii)} \quad $\eta\sigma_{gSR}(A)=\eta\sigma_{gDR}^l(A)=\eta\sigma_{gDR}^r(A)=\eta\sigma_{gDR}(A)$,

\smallskip

 \quad $\eta\sigma_{gS\M}(A)=\eta\sigma_{gD\M}^l(A)=\eta\sigma_{gD\M}^r(A)=\eta\sigma_{gD\M}(A)$;

\smallskip

\snoi {\rm (iii)} Let  the  complement of  $\sigma_{*}(A)$ be connected,  where $\sigma_{*}\in\{\sigma_{gSR},\sigma_{gDR}^l,\sigma_{gDR}^r \}$. Then $\sigma_{*}(A)=\sigma_{gRD}(A)$.

Let  the  complement of  $\sigma_{*}(A)$ be connected,  where $\sigma_{*}\in\{\sigma_{gS\M},\sigma_{gD\M}^l,\sigma_{gD\M}^r \}$. Then $\sigma_{*}(A)=\sigma_{gD\M}(A)$.

\end{theorem}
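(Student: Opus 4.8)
The plan is to reduce all three parts to the topological implication \eqref{spec.1}, verifying its hypothesis $\partial H\subset K\subset H$ for appropriate pairs drawn from the compact spectra at hand. I treat explicitly only the Drazin-Riesz family; the Drazin-meromorphic family is handled verbatim, replacing $\sigma_{gSR}$, $\sigma_{gDR}^*$, $\sigma_{gDR}$ by $\sigma_{gS\M}$, $\sigma_{gD\M}^*$, $\sigma_{gD\M}$ and invoking the meromorphic clauses of Corollary \ref{cor-F}.

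The heart of the argument is three boundary-containment facts. First I record, from Corollary \ref{get} (which yields $\sigma_{gDR}(A)=\sigma_{gDR}^l(A)\cup\sigma_{gDR}^r(A)$) together with \eqref{sljiva0}, the representations
\begin{equation*}
\sigma_{gDR}^l(A)=\sigma_{gSR}(A)\cup\inter\,\sigma_l(A),\qquad
\sigma_{gDR}(A)=\sigma_{gSR}(A)\cup\inter\,\sigma_l(A)\cup\inter\,\sigma_r(A),
\end{equation*}
and the analogue for $\sigma_{gDR}^r(A)$. The interior identities of Corollary \ref{cor-F} (iv) give $\inter\,\sigma_{gDR}^*(A)=\inter\,\sigma_*(A)$, and since $\sigma_{gDR}^*(A)\subset\sigma_{gDR}(A)$ also $\inter\,\sigma_*(A)\subset\inter\,\sigma_{gDR}(A)$. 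A boundary point of a compact set lies in the set but outside its interior; removing the open pieces $\inter\,\sigma_*(A)$ from the union representations then leaves
\begin{equation*}
\partial\sigma_{gDR}^l(A)\subset\sigma_{gSR}(A),\qquad
\partial\sigma_{gDR}^r(A)\subset\sigma_{gSR}(A),\qquad
\partial\sigma_{gDR}(A)\subset\sigma_{gSR}(A).
\end{equation*}

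With these facts, part (i) follows directly from \eqref{spec.1}. Taking $H=\sigma_{gDR}^l(A)$ and $K=\sigma_{gSR}(A)$ (the sandwich $\partial H\subset K\subset H$ holds by the above) gives $\partial\sigma_{gDR}^l(A)\subset\partial\sigma_{gSR}(A)$; taking $H=\sigma_{gDR}(A)$ and $K=\sigma_{gDR}^l(A)$ (valid since $\partial\sigma_{gDR}(A)\subset\sigma_{gSR}(A)\subset\sigma_{gDR}^l(A)$) gives $\partial\sigma_{gDR}(A)\subset\partial\sigma_{gDR}^l(A)$. The right-hand inclusions are obtained symmetrically, completing the diagram. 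For part (ii) I use the other conclusion of \eqref{spec.1}, that $\eta K=\eta H$ under the same hypothesis: the two sandwiches yield $\eta\sigma_{gSR}(A)=\eta\sigma_{gDR}^l(A)=\eta\sigma_{gDR}(A)$, and the right analogue supplies $\eta\sigma_{gDR}^r(A)$, so all four connected hulls coincide.

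Part (iii) will then follow from (ii) together with the elementary observation that a compact set $K$ whose complement is connected satisfies $\eta K=K$ (the unbounded component of $\CC\setminus K$ being all of $\CC\setminus K$). For $\sigma_*\in\{\sigma_{gSR},\sigma_{gDR}^l,\sigma_{gDR}^r\}$ one then has $\sigma_*(A)\subset\sigma_{gDR}(A)\subset\eta\sigma_{gDR}(A)=\eta\sigma_*(A)=\sigma_*(A)$, forcing equality throughout and in particular $\sigma_*(A)=\sigma_{gDR}(A)$. I anticipate no real obstacle; the only delicate point is confirming that the interior identities of Corollary \ref{cor-F} (iv) transfer to the two-sided spectrum $\sigma_{gDR}(A)$, which is secured by $\sigma_{gDR}^*(A)\subset\sigma_{gDR}(A)$ and monotonicity of the interior operation.
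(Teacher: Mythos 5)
Your proposal is correct and follows essentially the same route as the paper: both establish the sandwich $\partial H\subset\sigma_{gSR}(A)\subset H$ for $H\in\{\sigma_{gDR}^l(A),\sigma_{gDR}^r(A),\sigma_{gDR}(A)\}$ (and its meromorphic analogue) from the union representations and the interior identities of Corollary \ref{cor-F}, then obtain (i) and (ii) from \eqref{spec.1} and (iii) from (ii) together with $\eta K=K$ when $\CC\setminus K$ is connected. The only cosmetic difference is that for $\partial\sigma_{gDR}(A)$ you use Corollary \ref{get} to write $\sigma_{gDR}(A)=\sigma_{gSR}(A)\cup\inter\,\sigma_{l}(A)\cup\inter\,\sigma_{r}(A)$, whereas the paper invokes Corollary \ref{cup} to get $\sigma_{gDR}(A)=\sigma_{gSR}(A)\cup\inter\,\sigma(A)$; both yield the same boundary containment.
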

\begin{proof} (i): As

{\footnotesize
$$\begin{array}{ccccccccccc}
&&&&   \,  \sigma_{gDR}^l(A) & & \\
&&& \rotatebox{20}{$\subset$} & & \rotatebox{-20}{$\subset$}&\\
  & &   \, \sigma_{gSR}(A)
   & &&  & \,  \sigma_{gDR}(A);\\
&&& \rotatebox{-20}{$\subset$} & &\rotatebox{20}{$\subset$}&\\
& &&&    \, \sigma_{gDR}^r(A)&  & &\\
\end{array}$$
}

\noindent  according to Corollary \ref{cor1}  (i), Corollary \ref{cor-F} (iii) and  \eqref{spec.1} it is enough to prove that
$
  \partial\, \sigma_{*}(A)\subset\sigma_{gSR}(A),
$
for  $\sigma_*\in\{ \sigma_{gDR}^l, \sigma_{gDR}^r, \sigma_{gDR}   \}$. From \eqref{sljiva0} and Corollary \ref{cor-F} (iv) it follows that
\begin{eqnarray*}
  \partial\,  \sigma_{gDR}^l(A) &=& \sigma_{gDR}^l(A)\setminus \inter\, \sigma_{gDR}^l(A)=(\sigma_{gSR}(A)\cup \inter\, \sigma_{l}(A))\setminus \inter\, \sigma_{l}(A)\\&=&\sigma_{gSR}(A)\setminus \inter\, \sigma_{l}(A)
   \subset  \sigma_{gSR}(A).
\end{eqnarray*}
Similarly, $\partial\,  \sigma_{gDR}^r(A)\subset  \sigma_{gSR}(A)$.

From  Corollary \ref{cup} we conclude that  $ \sigma_{gDR}(A)= \sigma_{gSR}(A)\, \cup\, \inter\, \sigma(A)$, which implies that $ \inter\, \sigma_{gDR}(A)=  \inter\, \sigma(A)$. Applying the same method as above we conclude that  $\partial\,  \sigma_{gDR}(A)\subset  \sigma_{gSR}(A)$.

The rest of the inclusions can be proved similarly.

(ii): It follows from \eqref{spec.1}, (i) and (ii).

(iii): Let the complement of  $\sigma_{*}(A)$  be  connected,  where $\sigma_{*}\in\{\sigma_{gSR},\sigma_{gDR}^l,\sigma_{gDR}^r \}$. Then by using (iii) we get  $\sigma_{gDR}(A)\supset \sigma_{*}(A)=\eta \sigma_{*}(A)=\eta \sigma_{gDR}(A)\supset \sigma_{gDR}(A)$, and so $\sigma_{*}(A)=\sigma_{gDR}(A)$.

The second assertion can be proved similarly.
\end{proof}

From Theorem \ref{rubS} (ii) it follows that
if one of the spectra $\sigma_{gSR}(A),\sigma_{gDR}^l(A),\sigma_{gDR}^r(A)$, $\sigma_{gDR}(A)$ is at most countable, then  they are equal.
Also, if one of the spectra $\sigma_{gS\M}(A)$, $\sigma_{gD\M}^l(A),\sigma_{gD\M}^r(A),\break \sigma_{gD\M}(A)$ is at most countable, then  they are equal. Hence using \cite[Theorem 3.11]{ZC} and \cite[Theorem 12]{ZD} respectively, we get the following corollaries.

\begin{corollary} For $A\in L(X)$ the following statements are equivalent:

\snoi {\rm (i)} $\sigma_{ gSR}(A)=\emptyset$;

\snoi {\rm (ii)} $\sigma_{ gDR}^l(A)=\emptyset$;

\snoi {\rm (iii)} $\sigma_{ gDR}^r(A)=\emptyset$;

\snoi {\rm (iv)} $\sigma_{ gDR}(A)=\emptyset$;

\snoi {\rm (v)} $\sigma_{ gKR}(A)=\emptyset$;

\snoi {\rm (vi)} $A$ is polynomially Riesz;

\snoi  {\rm (vii)} $\sigma_{\B}(A)$ is a finite set.

\end{corollary}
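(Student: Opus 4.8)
The plan is to split the seven conditions into two overlapping blocks and glue them at condition (iv): the four ``decomposition/one-sided'' spectra (i)--(iv), which I would handle purely formally via the common connected hull supplied by Theorem \ref{rubS}(ii), and the block (iv)--(vii), which is exactly the scalar characterization of the (two-sided) generalized Drazin--Riesz spectrum already established in \cite[Theorem 3.11]{ZC}.

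First I would prove that (i), (ii), (iii), (iv) are equivalent. The four sets $\sigma_{gSR}(A)$, $\sigma_{gDR}^l(A)$, $\sigma_{gDR}^r(A)$, $\sigma_{gDR}(A)$ are compact by Corollary \ref{cor1}(i) and Corollary \ref{cor-F}(iii), and by Theorem \ref{rubS}(ii) they have a common connected hull, namely $\eta\sigma_{gSR}(A)=\eta\sigma_{gDR}^l(A)=\eta\sigma_{gDR}^r(A)=\eta\sigma_{gDR}(A)$. The key elementary observation is that for a compact $K\subset\CC$ one has $\eta K=\emptyset$ if and only if $K=\emptyset$: if $K=\emptyset$ then $\CC\setminus K=\CC$ is a single unbounded component, so its complement $\eta K$ is empty, while conversely $K\subset\eta K$ forces $K=\emptyset$ whenever $\eta K=\emptyset$. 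Thus the vanishing of any one of the four spectra makes its connected hull empty, hence makes all four connected hulls empty, hence (again by $K\subset\eta K$) makes all four spectra empty. This gives (i)$\Longleftrightarrow$(ii)$\Longleftrightarrow$(iii)$\Longleftrightarrow$(iv). Equivalently, one may invoke the remark preceding the statement: an empty set is at most countable, so by that remark the four spectra coincide, and being equal to the empty one they all vanish.

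Finally, to incorporate (v), (vi), (vii), I would appeal directly to \cite[Theorem 3.11]{ZC}, which asserts that $\sigma_{gDR}(A)=\emptyset$ is equivalent to $\sigma_{gKR}(A)=\emptyset$, to $A$ being polynomially Riesz, and to $\sigma_{\B}(A)$ being a finite set. Chaining this with the first block through the common member (iv) yields the full list of equivalences.

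As for difficulty, there is essentially no genuine obstacle internal to this proof: the one point that must be stated carefully is the connected-hull identity $\eta K=\emptyset\Longleftrightarrow K=\emptyset$ for compact $K$, which upgrades the equality of connected hulls from Theorem \ref{rubS}(ii) into an equality of the (possibly empty) spectra. All the substantive work has already been absorbed into Theorem \ref{rubS}(ii) and into the external characterization \cite[Theorem 3.11]{ZC}; the present argument is only the packaging of these two ingredients.
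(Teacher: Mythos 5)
Your proof is correct and follows essentially the same route as the paper: the paper deduces the equivalence of (i)--(iv) from Theorem \ref{rubS}(ii) via the remark that if one of the four spectra is at most countable (in particular empty) they all coincide, and then glues in (v)--(vii) through \cite[Theorem 3.11]{ZC}, exactly as you do. Your direct observation that $\eta K=\emptyset$ if and only if $K=\emptyset$ for compact $K$ is just a slightly more explicit packaging of the same connected-hull argument, which you yourself note is equivalent to the paper's remark.
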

%

\begin{corollary}
For  $A\in L(X)$ the following statements are equivalent:

\snoi {\rm (i)} $\sigma_{ gS\M}(A)=\emptyset$;

\snoi {\rm (ii)} $\sigma_{gD\M}^l(A)=\emptyset$;

\snoi {\rm (iii)} $\sigma_{ gD\M}^r(A)=\emptyset$;

\snoi {\rm (iv)} $\sigma_{ gD\M}(A)=\emptyset$;

\snoi {\rm (v)} $\sigma_{ gK\M}(A)=\emptyset$;

\snoi {\rm (vi)} $A$ is polynomially meromorphic;

\snoi  {\rm (vii)} $\sigma_{D}(A)$ is a finite set.

\end{corollary}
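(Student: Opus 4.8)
The plan is to split the seven conditions into two blocks---the ``empty spectrum'' conditions (i)--(iv) and the ``classical'' conditions (v)--(vii)---to prove each block internally, and then to bridge them. The first block I would handle by the connected-hull identity of Theorem \ref{rubS} (ii), the second is exactly \cite[Theorem 12]{ZD}, and the bridge rests on the comparison between generalized Saphar-meromorphic and generalized Kato-meromorphic decompositions together with the finiteness of $\sigma_D(A)$.

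First I would prove (i) $\Longleftrightarrow$ (ii) $\Longleftrightarrow$ (iii) $\Longleftrightarrow$ (iv). By Theorem \ref{rubS} (ii) the four spectra $\sigma_{gS\M}(A)$, $\sigma_{gD\M}^l(A)$, $\sigma_{gD\M}^r(A)$, $\sigma_{gD\M}(A)$ share a common connected hull. Since the connected hull of the empty set is empty and every compact set is contained in its connected hull, the emptiness of any one of these four spectra forces the common connected hull to be empty, whence all four spectra are empty. As (i)--(iv) are precisely the statements that the respective spectrum is empty, they are equivalent. The block (v) $\Longleftrightarrow$ (vi) $\Longleftrightarrow$ (vii) I would simply quote from \cite[Theorem 12]{ZD}.

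It remains to join the two blocks, which I would do by proving (i) $\Longrightarrow$ (v) and (vii) $\Longrightarrow$ (i). The first is immediate: a Saphar operator is Kato, so a GS($\M$)D is in particular a GK($\M$)D, giving $\sigma_{gK\M}(A)\subset\sigma_{gS\M}(A)$, and emptiness of the larger set forces that of the smaller. For (vii) $\Longrightarrow$ (i), assume $\sigma_D(A)$ is finite; each $\lambda\in\sigma(A)\setminus\sigma_D(A)$ is a pole of the resolvent, hence isolated in $\sigma(A)$, so $\inter\,\sigma(A)\subset\sigma_D(A)$, and being open and finite this interior is empty. Therefore $0\notin\inter\,\sigma(A-\lambda I)$ for every $\lambda\in\CC$, and applying to each $A-\lambda I$ the corollary that GK($\M$)D and GS($\M$)D are equivalent when $0\notin\inter\,\sigma$ yields $\sigma_{gS\M}(A)=\sigma_{gK\M}(A)$; since (vii) gives (v) via \cite[Theorem 12]{ZD}, both are empty, which is (i). The one delicate point---the main obstacle---is precisely this last bridge: the finiteness of the Drazin spectrum must first be converted into $\inter\,\sigma(A)=\emptyset$ before the global coincidence $\sigma_{gS\M}(A)=\sigma_{gK\M}(A)$ can be invoked, and only then do the remaining implications reduce to routine chasing of the inclusions $\sigma_{gK\M}(A)\subset\sigma_{gS\M}(A)\subset\sigma_{gD\M}(A)$.
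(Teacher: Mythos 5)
Your proof is correct, and its skeleton coincides with the paper's: the equivalence of (i)--(iv) comes from the common connected hull in Theorem \ref{rubS} (ii) (the paper actually records the slightly more general fact that if any one of the four spectra is at most countable then all four are equal, of which your emptiness argument is the special case, since a compact set is contained in its connected hull and $\eta\emptyset=\emptyset$), and \cite[Theorem 12]{ZD} supplies the classical conditions. Where you genuinely depart is the bridge between the two blocks: the paper simply cites \cite[Theorem 12]{ZD}, whose list of equivalent statements evidently includes $\sigma_{gD\M}(A)=\emptyset$, so conditions (iv) and (v)--(vii) are joined for free; you instead quote it only for (v)$\Longleftrightarrow$(vi)$\Longleftrightarrow$(vii) and rebuild the junction from results internal to the paper --- (i)$\Longrightarrow$(v) from the inclusion $\sigma_{gK\M}(A)\subset\sigma_{gS\M}(A)$ (Saphar operators are Kato), and (vii)$\Longrightarrow$(i) by converting finiteness of $\sigma_{D}(A)$ into $\inter\,\sigma(A)=\emptyset$ (points of $\sigma(A)\setminus\sigma_{D}(A)$ are poles of the resolvent, hence isolated, so $\inter\,\sigma(A)$ is an open subset of a finite set) and then applying, to each $A-\lambda I$, the unlabeled corollary following Corollary \ref{cupu} to obtain $\sigma_{gS\M}(A)=\sigma_{gK\M}(A)$. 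Your route buys self-containedness, needing less from the external reference, at the cost of a longer argument; the paper's route is a one-line derivation but leans on the full strength of the cited theorem. Both are sound, and your identification of the conversion of (vii) into emptiness of the interior of the spectrum as the one delicate step is accurate.
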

%

\begin{example}\rm The $Ces\acute{a}ro\ operator$ $C_p$ on the classical Hardy space $H_p(\DDD)$, $\DDD$ is the open unit disc and $1<p<\infty$, is defined  by
$$
 (C_pf)(\lambda)=\ds\frac 1{\lambda}\int_0^{\lambda}\ds\frac{f(\mu)}{1-\mu}\, d\mu,\ \, {\rm for\ all\ }f\in H_p(\DDD)\ {\rm and\ }\lambda\in\DDD.
 $$
We recall  that its spectrum is  the closed disc $\Gamma_p$ centered at $p/2$ with radius $p/2$ and $\sigma_{Kt}(C_p)=\partial \Gamma_p$  \cite{aienarosas}.
   Since for $\lambda\in\inter \Gamma_p$ we have that $C_p-\lambda I$ is Fredholm,  bounded below and  not surjective \cite[Example 5.13]{ZS}, we have that 
   $\sigma_{r}(C_p)=\Gamma_p$ and  $\sigma_{l}(C_p)=\partial\Gamma_p$. From Corollary \ref{zad} it follows that $\sigma_{gS\M}(C_p)= \sigma_{gSR}(C_p)=\partial \Gamma_p$ and $\sigma_{gD\M}^l(C_p)= \sigma_{gDR}^l(C_p)=\partial \Gamma_p$,
while from
  Corollary  \ref{cor-F} (i) we have
  $
\sigma_{gDR}^r(C_p) = \sigma_{gSR}(C_p)\cup \inter\, \sigma_{r}(C_p)=\Gamma_p$ and
$\sigma_{gD\M}^r(C_p) = \sigma_{gS\M}(C_p)\cup \inter\, \sigma_{r}(C_p)=\Gamma_p$.

\end{example}

\medskip \noindent {\bf Acknowledgement.} The author wishes to thank Professor  Mohammed Berkani for helpful conversations concerning the paper.

\medskip

\noindent

\noindent{Sne\v zana
\v C. \v Zivkovi\'c-Zlatanovi\'c}

\noindent{University of Ni\v s\\
Faculty of Sciences and Mathematics\\
P.O. Box 224, 18000 Ni\v s, Serbia}

\noindent {\it E-mail}: {\tt mladvlad@mts.rs}, {\tt snezana.zivkovic-zlatanovic@pmf.edu.rs}


\begin{thebibliography}{000}




\bibitem{Ai} P. Aiena, {\em Fredholm and local spectral theory,
with applications to multipliers}, Kluwer Academic Publishers
(2004).

\bibitem{Aiena} P. Aiena, {\em Fredholm and local spectral theory II, with application to Weyl-type theorems},
Lecture Notes in Mathematics, 2235, Springer
(2018).


\bibitem{aienarosas} P. Aiena, E. Rosas, {\em Single-valued
extension property at the points of the approximate point spectrum},
J. Math. Anal. Appl. 279 (2003), 180-188.

\bibitem{Aiena Carpintero} P. Aiena and M. T. Biondi and C. Carpintero, {\em  On Drazin invertibility}, Proc. Amer.
Math. Soc. 136 (2008), 2839-2848.

\bibitem{BS} M. Berkani and M. Sarih, {\it An Atkinson-type theorem for B-Fredholm operators}, Studia Mathematica 148 (3) (2001), 251-257.

\bibitem{Berkani} M. Berkani, {\it On the p-invertibility in rings and Banach algebras}, Aequat. Math.
96 (2022), 1125-1140.


\bibitem {Burlando} L. Burlando,    {\em Continuity of spectrum and spectral radius
in algebras of operators}, Annales Facult\'e des Sciences de Toulouse, Vol. IX (1), 1988.

\bibitem{CPY}S.R. Caradus, W.E.
Pfaffenberger and B. Yood, {\it Calkin algebras and algebras of
operators on Banach spaces}, Dekker 1974.






\bibitem{Drazin} M.P. Drazin, {\it Pseudo-inverses in associative rings and semigroups}, American Mathematical
Monthly 65 (1958), 506-14.

\bibitem{Thome} D. E. Ferreyra, F. E. Levis, N. Thome, {\it Left and right generalized Drazin invertible operators on Banach spaces and applications}, Oper. Matrices. {\bf 13}(3) (2019), 569-583.







\bibitem{Grabiner}  S. Grabiner, {\it Uniform ascent and descent of bounded operators}; J. Math. Soc.
Japan 34, no 2, (1982), 317-337.

\bibitem{Ghorbel Mnif} A. Ghorbel, M. Mnif, \textit{On a generalization of left and right invertible operators on Banach spaces}, Operators and  Matrices, 16(2)  (2022) 401-413.

\bibitem{GZ} A. Ghorbel, S. \v C. \v Zivkovi\'c-Zlatanovi\'c, \textit{On some classes of Saphar type operators II}, 2024. arXiv:5427090. 





\bibitem{H3} R.E. Harte, {\it Invertibility and Singularity for Bounded Linear Operators}, Marcel
Dekker, Inc., New York/Basel, 1988.




\bibitem{algeria} K. Hocine, M.  Benharrat, B. Messirdi, \textit{Left and
right generalized Drazin invertible operators}, Linear and
Multilinear Algebra, {\bf 63} (2015) 1635--1648.


\bibitem{Q. Jiang} Q. Jiang, H. Zhong and S. Zhang, {\em Components of topological uniform descent resolvent set
and local spectral theory}, Linear Algebra Appl., 438 (2013), 1149-1158.


\bibitem{King} C. King, \textit{A note on Drazin inverses}, Pacific J. Math. {\bf 70} (1977), 383--390.


\bibitem{K} J.J. Koliha, {\it A generalized Drazin inverse}, Glasgow
Math. Jour. {\bf 38} (1996) 367-381.









\bibitem{MZ}	M. D. Dimitrijevi\c, S. \v C. \v Zivkovi\c-Zlatanovi\c,  \textit{Essentially left and right generalized Drazin invertible operators and generalized Saphar decomposition}, Filomat 37(28) (2023), 9511-9529.



\bibitem{Mu} V. M\"uller, {\em Spectral theory of linear operators and spectral
systems in Banach algebras}, Birkh\"auser (2007).

\bibitem{Ounandjela} D. Ounandjela, M. Benharrat, B. Messirdi, {\it The perturbation classes problem for generalized Drazin invertible operators II},  Complex  Anal. Oper. Theory 11(6) (2019), 13: 1361-1375.

\bibitem {voki}  V. Rako\v cevi\'c, {\it Generalized spectrum and
commuting comact perturbations}, Proc. Edinburgh Math. Soc., {\bf
36}(1993), 197-209.


\bibitem {Ren Jiang} Y.X. Ren, L.N. Jiang, {\it Left and right Drazin inverses in rings and operator algebras},
J. Algebra Appl., (2022).



\bibitem {ZDH1} S.\v C. \v Zivkovi\'c-Zlatanovi\'c, D.S.
Djordjevi\'c and R.E. Harte, {\sl On left and right Browder
operators}, Jour. Korean Math. Soc. {\bf 48}(2011), 1053-1063.



\bibitem{IEOT} S. \v C. \v Zivkovi\'c-Zlatanovi\'c, D. S.
Djordjevi\'c and R.E. Harte, {\it Left-right Browder and
left-right Fredholm operators}, Integr. Equ. Oper. T. {\bf
69} (2011), 347-363.





\bibitem{ZDHB} S. \v{C}. \v{Z}ivkovi\'c-Zlatanovi\'c, D. S. Djordjevi\'c, R. E.
Harte, B. P. Duggal, {\em On polynomially Riesz operators}, Filomat
28:1 (2014), 197-205.






\bibitem{ZC}{ S. \v C. \v Zivkovi\'c-Zlatanovi\'c, M. D. Cvetkovi\'c,} {\it Generalized Kato-Riesz decomposition and
generalized Drazin-Riesz invertible operators},  Linear and
Multilinear Algebra,  (2017), 65(6),  1171-1193.




\bibitem{ZD} S. \v{C}. \v{Z}ivkovi\'{c}-Zlatanovi\'{c} and B.P. Duggal, {\it Generalized Kato-meromorphic decomposition, generalized Drazin-meromorphic invertible operators and single-valued extension property}, Banach. J. Math. Anal., 14(3), 894-914, 2020.


\bibitem{SANU} S.\ \v C.\ \v Zivkovi\'c-Zlatanovi\'c, {\it An introduction into Fredholm theory and generalized Drazin-Riesz invertible operators}, Zbornik Radova MI SANU\ {\bf 20} (28) (2022), 113--198.

\bibitem{ZS}	S. \v C. \v Zivkovi\'c-Zlatanovi\'c, S.V. Djordjevi\'c, \textit{On some classes of Saphar type operators}, Revista de la Real Academia de Ciencias Exactas, F\'{\i}sicas y Naturales. Serie a Matem\' aticas 116:170 (2022).


\bibitem{GDR}	S. \v C. \v Zivkovi\'c-Zlatanovi\'c, \textit{Generalized Drazin invertible elements  relative to a  regularity}, Linear and Multilinear Algebra (2023).





\end{thebibliography}
\end{document}